\renewcommand{\sqrt}[1]{\left( #1 \right)^\frac12}
\newcommand{\sqrtp}[1]{\left(#1\right)^\frac{1}{p}}
\newcommand{\V}{U}
\newcommand{\D}[0]{\mathbb{D}}
\newcommand{\R}[0]{\mathbb{R}}
\newcommand{\E}[0]{\mathbb{E}}
\newcommand{\Z}[0]{\mathbb{Z}}
\newcommand{\T}[0]{\mathbb{T}}
\newcommand{\N}[0]{\mathbb{N}}
\renewcommand{\d}[0]{\mathrm{d}}
\newcommand{\sgn}[0]{\mathrm{sgn}}
\renewcommand{\coprod}{\bigsqcup}
\newcommand{\ol}{\overline}
\newcommand{\n}[1]{\left\|#1\right\|}
\newcommand{\p}[1]{\left(#1\right)}
\newcommand{\acts}[2]{#1:#2\, \rotatebox[origin=c]{90}{$\circlearrowleft$}}
\newcommand{\id}{\mathrm{id}}
\newcommand{\diag}{\mathrm{diag}\,}
\newcommand{\beq}{\begin{equation}}
\newcommand{\eeq}{\end{equation}}
\newcommand{\supp}[0]{\mathrm{supp}\,}
\newcommand{\clspan}[0]{\overline{\mathrm{span}}\,}
\newtheorem{thm}{Theorem}[section]
\newtheorem{lem}[thm]{Lemma}
\newtheorem{deff}[thm]{Definition}
\newtheorem{cor}[thm]{Corollary}
\numberwithin{equation}{section}
\theoremstyle{plain}
\newcounter{thmintr}
\newtheorem{intrthm}[thmintr]{Theorem}
\author{Maciej Rzeszut}
\address{Institute of Mathematics of Polish Academy of Sciences\\
Śniadeckich 8\\
00-656 Warszawa}
\email{mrzeszut@impan.pl}
\keywords{U-statistics, real interpolation, weighted inequalities}
\subjclass[2010]{60G50, 46E30, 46B70}
\title[Johnson--Schechtman inequalities and interpolation]{Weighted and multivariate Johnson--Schechtman inequalities with application to interpolation theory}
\begin{document}

\begin{abstract} We prove a weighted version of a classical inequality of Johnson and Schechtman from which we derive a decomposition theorem for $p$-th moments ($0<p\leq 1$) of nonnegative generalized $U$-statistics with constant not dependent on $p$. In particular, for $1\leq p\leq 2$, the norm in the subspace $\V^p_{\leq m}\left(\Omega^\infty\right)$ of $L^p\left(\Omega^\infty\right)$ spanned by functions dependent on at most $m$ variables is equivalent to the norm in a suitable interpolation sum of $L^p\left(L^2\right)$ spaces. As a consequence, we obtain some interpolation properties of $U^1_m\left(\Omega^\infty,\ell^p\right)$ that are known to imply cotype 2 of $L^1/\V_{\leq m}^1\left(\Omega^\infty\right)$.  
\end{abstract}

\maketitle
\tableofcontents

\section{Introduction} 

The well known inequality due to Rosenthal \cite{rosineq} states that for $1\leq p<\infty$,
\beq \label{rosintr}\left\|\sum_i X_i\right\|_{L^p(\Omega)}\simeq_p \max\left(\sum_i \left\|X_i\right\|_{L^1}, \left(\sum_i \left\|X_i\right\|_{L^p}^p\right)^\frac{1}{p}\right)\eeq
where $X_i$ are nonnegative independent random variables on $\p{\Omega,\mathcal{F},\mu}$. Originally, it was proved for the purpose of Banach space geometry. The precise growth of the constant as a function of $p$ was found in \cite{JSchZconst}. In the case of $0<p<1$, it appears that there is no known expression for $\left\|\sum X_i\right\|_{L^p}$ that would be as explicit as the right hand side of \eqref{rosintr}. Theorems providing two sided bounds for this quantity, valid for all $0<p<\infty$, were proved by Johnson and Schechtman \cite{JSch}, Klass and Nowicki \cite{klnow} and Latała \cite{ralatalamoments}; see also \cite{dilineq}. All of them contain an Orlicz norm in some form. The most important for us  is a special case of the main theorem from \cite{JSch}, namely the inequality 
\beq \label{JSchintr}\left\|\sum_i X_i\right\|_{L^p}\simeq_p \inf_{X_i=Y_i+Z_i} \sum_i \left\|Y_i\right\|_{L^1}+ \left(\sum_i \left\|Z_i\right\|_{L^p}^p\right)^\frac{1}{p},\eeq
valid for $0< p\leq 1$. It is a natural counterpart to \eqref{rosintr} in the following sense. Suppose that $\p{\mathcal{F}_i}_{i=1}^\infty$ are independent and $X_i$ is $\mathcal{F}_{i}$-measurable. Since the sequence $\p{X_i}_{i=1}^\infty$ carries the same information as a function $\bigsqcup_i X_i$ on the disjoint union $\overline{\Omega}=\bigsqcup_i \left(\Omega,\mathcal{F}_i,\mu\right)$ (which is now a sigma-finite measure space), the last two inequalities can be conveniently written as 
\beq\label{introrlicz} \left\|\sum_i X_i\right\|_{L^p}\simeq_p 
\begin{cases} \left\|\bigsqcup_i X_i\right\|_{L^1\cap L^p\left(\overline{\Omega}\right)}&\text{ for }1\leq p<\infty \\ \left\|\bigsqcup_i X_i\right\|_{L^1+L^p\left(\overline{\Omega}\right)}&\text{ for }0<p\leq 1 \end{cases} \simeq_p \left\|\bigsqcup_i X_i \right\|_{L^{\phi_p}\p{\overline{\Omega}}}\eeq
where $+$ denotes the interpolation sum and $\phi_p$ is an Orlicz function such that $\phi_p(t)\simeq t$ for $0\leq t\leq 1$ and $\phi_p(t)\simeq t^p$ for $t\geq 1$. For more information about Orlicz norms in this context, we refer the reader to \cite{dilhandbook}. 
\par
It is a common practice to search for analogues of classical theorems concerning independent random variables in the setting of $U$-statistics, introduced by Hoeffding in \cite{wassily}. This has been done for CLT (see e.g. \cite{cltustat}, \cite{jansonnowicki}), LIL (see e.g. \cite{GKLZ}, \cite{adlatlilustat}, \cite{lilustathilb}), SLLN (see e.g. \cite{hoeffslln}) just to name a few. A natural multivariate counterpart to $\left\|\sum X_i\right\|_{L^p}$ for nonnegative and independent $X_i$ is the $p$-th moment of a nonnegative generalized decoupled $U$-statistic, i.e. the quantity
\beq \label{ustatintrdec}\left\|\sum_{i_1<\ldots<i_m}f_{i_1,\ldots,i_m}\p{X^{(1)}_{i_1},\ldots,X^{(m)}_{i_m}}\right\|_{L^p}\eeq
where $X^{(j)}_{i_j}$ are independent random variables and $f_{i_1,\ldots,i_m}$ are nonnegative functions on $\mathbb{R}^m$. By virtue of a decoupling inequality due to Zinn \cite{zinn}, if the distribution of $X^{(j)}_{i}$ is the same for all $i,j$, then \eqref{ustatintrdec} is equivalent to its undecoupled version 
\beq \label{ustatintrundec}\left\|\sum_{i_1<\ldots<i_m}f_{i_1,\ldots,i_m}\p{X_{i_1},\ldots,X_{i_m}}\right\|_{L^p}.\eeq
For $p\geq 1$, two-sided bounds for \eqref{ustatintrdec} in terms of mixed $L^1\p{L^p}$ norms were developed in \cite{GLZ}. They were generalized to Banach space valued $U$-statistics in \cite{adamczak}, extending the inequalities of Rosenthal and of Klass and Nowicki. However, the authors indicated the lack of a satisfactory counterpart to these results for $0<p< 1$. For more information about $U$-statistics and decoupling we refer to \cite{GdlP}. \par
Let us shift our attention to the mean zero setting. Assuming that $\E X_i=0$, for $p\geq 1$, by Marcinkiewicz and Zygmund inequality \cite{marzyg}, we have 
\beq \label{intrmzyg}\left\|\sum X_i\right\|_{L^p}\simeq_p \left\|\sum \left|X_i\right|^2\right\|_{L^{\frac{p}{2}}}^{\frac12}\eeq
which allows to directly translate \eqref{introrlicz} to the mean zero case. A usual multivariate counterpart of independent mean zero variables are generalized canonical $U$-statistics, i.e. sums of the form
\beq \label{intrgencanundec}\sum_{i_1<\ldots<i_m }f_{i_1,\ldots,i_m}\p{X_{i_1},\ldots,X_{i_m}}\eeq
where $X_i$ are independent and identically distributed, while $f_{i_1,\ldots,i_m}$ are mean zero in each variable with respect to the law of $X_i$. By an inequality due to Bourgain \cite[Proposition 7]{bourgwalsh}, we get an analogous equivalence of $p$-th moment of \eqref{intrgencanundec} to the $p$-th moment of a square function
\beq \p{\sum_{i_1<\ldots<i_m }\left|f_{i_1,\ldots,i_m}\p{X_{i_1},\ldots,X_{i_m}}\right|^2}^\frac{1}{2}.\eeq 
Bourgain and Kwapień in \cite{bourgwalsh} and \cite{kwap} considered subspaces $\V^p_m\p{\Omega}$ of $L^p\p{\Omega}$ spanned by random variables of the form
\beq \label{intrsmallx}\sum_{i_1<\ldots<i_m }f_{i_1,\ldots,i_m}\p{X_{i_1},\ldots,X_{i_m}}\eeq
for all $f_{i_1,\ldots,i_m}\in L^1\p{\R^m}$ mean zero in each argument with respect to the law of $X$. The subspaces $\V^2_m\p{\Omega}$ for $m=0,1,2,\ldots$ form an orthogonal decomposition of $L^2\p{\Omega}$ and it turned out that $\V^p_m\p{\Omega}$ is complemented in $L^p\p{\Omega}$ for $1<p<\infty$, but not for $p\in\{1,\infty\}$. Moreover, by \eqref{intrmzyg} and \eqref{introrlicz}, $\V^p_1\p{\Omega}$ is isomorphic to $L^2\cap L^p\p{\overline{\Omega}}$ or $L^2+L^p\p{\overline{\Omega}}$ when $2\leq p<\infty$ or $1\leq p\leq 2$, respectively. If $\Omega= [0,1]$, then $U^p_1\p{\Omega}$ is isomorphic to $L^p\p{\R}$ for $1<p<\infty$, but not for $p=1$, see \cite{JMSchT}. This makes the case $p=1$ the most interesting to study. \par
Let us briefly introduce some aspects of inteprolation theory that will be of some importance to us. Let $X_1\subset L^1$ and $X_2=\p{X_1\cap L^2,\|\cdot\|_{L^2}}$. A desirable property of such a pair is $K$-closedness in $\p{L^1,L^2}$, from which one can derive real interpolation spaces between $X_1$ and $X_2$, see Section 2 for details. It is trivially satisfied if the orthogonal projection on $X_2$ is bounded in $L^1$. Bourgain proved in \cite{bourginterp} that if this projection is a Calder\'on--Zygmund operator, then  
\beq \label{intrbkcl}\p{X_1,X_2}\text{ is }K\text{-closed in }\p{L^1,L^2}.\eeq
A little is known about possible weaker assumptions on the projection onto $X_2\subset L^2$ that would imply \eqref{intrbkcl}. It has been proved in \cite{xul1h1} for an $m$-fold tensor of Riesz projection. It has also been shown in \cite{kisbian} for a tensor of a Riesz projection and a Calder\'on--Zygmund projection for the usually more difficult $(2,\infty)$ side of the interpolation scale. \par
Another interesting inteprolation property of subspaces of $L^1$ is connected to work of Bourgain \cite{bourgcotype}, Pisier \cite{pisiersimple} and Xu \cite{xul1h1}, resulting in a theorem that if $X\subset L^1$ is such that
\beq \label{intrbpx}\p{X\p{\ell^1},X\p{\ell^2}}\text{ is }K\text{-closed in }\p{L^1\p{\ell^1},L^2\p{\ell^2}},\eeq
then $L^1/X$ is of cotype 2 and every operator $L^1/X\to\ell^2$ is 1-summing. In fact, it was originally motivated by the question of cotype of $L^1/H^1$ answered by Bourgain \cite{bourgcotype}. Later, it was extended to $X=H^1\p{\mathbb{D}^m}$ in \cite{xul1h1} and for $X=H^1\p{\mathbb{B}_m}$ in \cite{bourginterp}.  \par
Let us turn to a detailed description of the main results of the paper. We are going to provide a weighted version of \eqref{JSchintr}, which in particular shows that in this inequality the constant is independent of $p$. Let us state a simplified version of Theorem \ref{premainlemma}. 
\begin{intrthm} \label{intrweighted}If $1\leq r<\infty$, and $X_i$ are independent and $W_i$ are $[0,1]$-valued weigths satisfying $\E\p{W_i\mid X_i}\geq \kappa$ for some constant $\kappa>0$ and all $i$, then
\beq \E\p{\sum_i \left|W_i X_i\right|^r}^\frac{1}{r} \geq \frac{1}{2}\kappa^r \inf_{X_i=Y_i+Z_i} \p{\sum \E Y_i^r}^\frac{1}{r}+ \sum \E Z_i.\eeq \end{intrthm}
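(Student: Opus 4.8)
The plan is to reduce the statement, in a few lines, to an unweighted Johnson--Schechtman-type lower bound with an \emph{absolute} constant, and then to prove that inequality by truncating each $X_i$ at a single, carefully chosen level.

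\emph{Removing the weights.} I would condition on $\mathcal G=\sigma(X_1,X_2,\dots)$. For fixed values of the $X_i$ the map $w=(w_i)_i\mapsto\p{\sum_i w_i^rX_i^r}^{1/r}=\n{(w_iX_i)_i}_{\ell^r}$ is a norm precomposed with a linear map, hence convex --- this is the only place $r\ge1$ enters. Conditional Jensen together with $\E(W_i\mid\mathcal G)\ge\kappa$ (valid when the pairs $(X_i,W_i)$ are independent, which is the natural hypothesis and which I assume) gives
\[ \E\p{\sum_i(W_iX_i)^r}^{1/r}=\E\,\E\left[\p{\sum_iW_i^rX_i^r}^{1/r}\,\Big|\,\mathcal G\right]\ \ge\ \E\p{\sum_i\kappa^rX_i^r}^{1/r}=\kappa\,\E\p{\sum_iX_i^r}^{1/r}. \]
Since $\kappa\le1\le r$ forces $\kappa\ge\kappa^r$, it is enough to find an absolute constant $c>0$ with $\E\p{\sum_iX_i^r}^{1/r}\ge c\,\inf_{X_i=Y_i+Z_i}\bigl[\p{\sum_i\E Y_i^r}^{1/r}+\sum_i\E Z_i\bigr]$.

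\emph{The unweighted inequality.} As the infimum is bounded by the value at any one decomposition, it suffices to exhibit a good one. Let $\lambda^*$ be the \emph{critical level}, determined by $\sum_i\E(X_i\wedge\lambda^*)^r=(\lambda^*)^r$; it exists because $\mu\mapsto\mu^{-1}\sum_i\E(X_i^r\wedge\mu)$ is continuous and non-increasing (the degenerate cases $\lambda^*\in\{0,\infty\}$ and the reduction to finitely many summands are routine). Put $Y_i=X_i\indicator_{\{X_i\le\lambda^*\}}$ and $Z_i=X_i\indicator_{\{X_i>\lambda^*\}}$, so $X_i^r=Y_i^r+Z_i^r$. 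For the \emph{bounded part}: $\sum_i\E Y_i^r\le\sum_i\E(X_i\wedge\lambda^*)^r=(\lambda^*)^r$, so it is enough to bound $\lambda^*$ by $\E\p{\sum_iX_i^r}^{1/r}$; the truncated sum $T:=\sum_i(X_i\wedge\lambda^*)^r$ has $\E T=(\lambda^*)^r$ and summands $\le(\lambda^*)^r$, hence $\E T^2\le2(\E T)^2$, so Paley--Zygmund gives $\mathbb{P}\bigl(T\ge\tfrac12\E T\bigr)\gtrsim1$; since $\sum_iX_i^r\ge T$ we get $\E\p{\sum_iX_i^r}^{1/r}\ge\E T^{1/r}\gtrsim\lambda^*$. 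For the \emph{tail part}: from $(X_i\wedge\lambda^*)^r\ge(\lambda^*)^r\indicator_{\{X_i\ge\lambda^*\}}$ the defining relation yields $q(\lambda^*)\le1$ for $q(t):=\sum_i\mathbb{P}(X_i>t)$, hence $q(t)\le1$ for all $t\ge\lambda^*$; Bonferroni and independence then give $\mathbb{P}(\max_iX_i>t)\ge q(t)-\tfrac12q(t)^2\ge\tfrac12q(t)$, and integrating $\sum_i\E Z_i=\lambda^*q(\lambda^*)+\int_{\lambda^*}^\infty q(t)\,dt$ gives $\sum_i\E Z_i\le2\,\E\max_iX_i\le2\,\E\p{\sum_iX_i^r}^{1/r}$. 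Adding the two estimates for this single decomposition completes the proof of the unweighted inequality.

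The main obstacle is the choice of truncation level, because the two halves pull in opposite directions: the tail estimate wants $\lambda$ large, so that the $Z_i$ are almost disjointly supported ($\sum_i\mathbb{P}(X_i>\lambda)\le1$), while the concentration estimate for the bounded part wants $\lambda$ small, so that $\lambda^r\lesssim\sum_i\E(X_i\wedge\lambda)^r$ and $T$ does not degenerate. The critical $\lambda^*$ is precisely the scale where both demands hold, and the crux is to check that neither estimate is lost there. A secondary point is that the crude Paley--Zygmund and Bonferroni steps above only give \emph{some} absolute constant; squeezing it to the $\tfrac12$ of the statement requires sharper (but routine) estimates at the scale $\lambda^*$.
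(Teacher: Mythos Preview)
Your reduction step is a genuine gap. You condition on $\mathcal G=\sigma(X_1,X_2,\dots)$ and then use $\E(W_i\mid\mathcal G)\ge\kappa$, which you justify by \emph{assuming} that the pairs $(X_i,W_i)$ are jointly independent. That assumption is not part of the statement, and it is precisely the case the theorem is designed to cover: the weights $W_i$ are allowed to depend on \emph{all} of the $X_j$. (Look at how the result is used in the proof of the two--variable decomposition theorem: there the weight $w_i(\xi,y)$ attached to $f_{i,j}(\xi,y_j)$ depends on the entire vector $y=(y_1,\dots,y_n)$, not just on $y_j$.) Under the stated hypothesis one only has $\E\bigl(\E(W_i\mid\mathcal G)\,\big|\,X_i\bigr)\ge\kappa$, and $\E(W_i\mid\mathcal G)$ can perfectly well be $\{0,1\}$-valued; for instance $W_1=\mathbbm 1_{\{X_2\in A\}}$ with $\mu(A)=\kappa$ satisfies $\E(W_1\mid X_1)=\kappa$ but $\E(W_1\mid\mathcal G)=W_1$. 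So the Jensen step fails in general, and with it the whole reduction to the unweighted inequality.

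The paper does not try to strip the weights off. It proves the inequality directly by a duality argument: a variational lemma says that $\E\|f\|_X\ge C\|f\|_Y$ on a subspace $V$ is equivalent to a lower bound on $\|P_V\nabla\|\cdot\|_X(f)\|_{Y^*}$, and for the norm $\|f\|_X=\bigl(\sum_i (W_i\vee\varepsilon)^r f_i^r\bigr)^{1/r}$ this gradient inequality is checked by a H\"older computation that converts the weight $W_i^r$ back into $\E(W_i\mid X_i)$; that is where the hypothesis $\E(W_i\mid X_i)\ge\kappa$ actually enters and where the factor $\kappa^r$ appears. The resulting constant is $\kappa^r 2^{-1/r'}\ge\tfrac12\kappa^r$.

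Your treatment of the \emph{unweighted} Johnson--Schechtman lower bound (critical truncation level, Paley--Zygmund for the bounded part, Bonferroni for the tails) is correct and gives an absolute constant; it is a different route from the paper's duality argument and is perfectly good for the case $W_i\equiv1$. But it does not extend to general weights: once you allow $W_i$ to depend on all the $X_j$, the sum $\sum_i(W_iX_i)^r$ is no longer a sum of independent terms, and neither the Paley--Zygmund nor the Bonferroni step survives. You would need a genuinely new idea to handle the weighted case, not an added hypothesis.
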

From this, we derive the following theorem (see Theorem \ref{4summand}).

\begin{intrthm}\label{intr4sum}Let $f_{i,j}\in L^1\left(\Omega^2\right)$ for $i,j=1,\ldots,n$ and $1\leq p<\infty$. Then
\begin{eqnarray}\label{eq:4sumequiv}\int_{\Omega^n}\int_{\Omega^n}\left(\sum_{i,j}\left|f_{i,j}\left(x_i,y_j\right)\right|^p\right)^\frac{1}{p} \d x\d y\simeq \inf_{\coprod_{i,j}f_{i,j}=a+b+c+d} &\|a\|_{L^1\left(\coprod_i \Omega \times \coprod_j \Omega\right)}\\ \nonumber +&\|b\|_{L^p\left(\coprod_i \Omega \times \coprod_j \Omega\right)}\\ \nonumber + &\|c\|_{L^1\left(\coprod_i \Omega,L^p\left(\coprod_j \Omega\right)\right)}\\ \nonumber+ &\|d\|_{L^1\left(\coprod_j \Omega,L^p\left(\coprod_i \Omega\right)\right)}\end{eqnarray}
with a constant not dependent on $p$. In more explicit terms, the inequality `$\gtrsim$' means that if \beq \label{eq:lesssim1}\int_{\Omega^n}\int_{\Omega^n}\left(\sum_{i,j}\left|f_{i,j}\left(x_i,y_j\right)\right|^p\right)^\frac{1}{p} \d x\d y\leq 1,\eeq then there is a decomposition \beq \label{eq:decompabcd}f_{i,j}=a_{i,j}+b_{i,j}+c_{i,j}+d_{i,j}\eeq such that 
\beq \sum_{i,j}\int_{\Omega}\int_{\Omega}\left|a_{i,j}\left(\xi ,\upsilon \right)\right| \d \xi \d \upsilon\lesssim 1,\eeq
\beq \p{\sum_{i,j}\int_{\Omega}\int_{\Omega}\left|b_{i,j}\left(\xi ,\upsilon \right)\right|^p\d \xi \d \upsilon }^\frac{1}{p}\lesssim 1,\eeq
\beq \sum_i \int_{\Omega}\left(\sum_j \int_{\Omega}\left|c_{i,j}\left(\xi ,\upsilon \right)\right|^p\d \upsilon \right)^\frac{1}{p} \d \xi \lesssim 1,\eeq
\beq \sum_j \int_{\Omega}\left(\sum_i \int_{\Omega}\left|d_{i,j}\left(\xi ,\upsilon \right)\right|^p\d \xi \right)^\frac{1}{p} \d \upsilon \lesssim 1.\eeq 
Moreover, it can be chosen in such a way that for each $i,j$, supports of $a_{i,j},b_{i,j},c_{i,j},d_{i,j}$ are disjoint.\end{intrthm}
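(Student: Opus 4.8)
The plan is to prove the two inequalities in \eqref{eq:4sumequiv} separately, the inequality ``$\lesssim$'' being elementary and ``$\gtrsim$'' carrying all the weight. For ``$\lesssim$'', starting from a decomposition $f_{i,j}=a_{i,j}+b_{i,j}+c_{i,j}+d_{i,j}$, apply the triangle inequality in $\ell^p$ (valid since $p\geq1$) coordinatewise in $(i,j)$, so that $\p{\sum_{i,j}|f_{i,j}(x_i,y_j)|^p}^{1/p}$ is dominated by the sum of the four corresponding $\ell^p$-aggregates. Integrating in $x,y$ and handling the four terms one at a time: for $a$, bound the $\ell^p$-aggregate by the $\ell^1$-aggregate and use Fubini to get $\|a\|_{L^1(\coprod_i\Omega\times\coprod_j\Omega)}$; for $b$, use concavity of $t\mapsto t^{1/p}$ (Jensen) and Fubini to get $\|b\|_{L^p(\coprod_i\Omega\times\coprod_j\Omega)}$; for $c$, rewrite the $(i,j)$-aggregate as an outer sum over $i$ of $\ell^p$-norms in $j$, pull the sum over $i$ outside the $(1/p)$-th power by subadditivity, then use Jensen in the $y$-variables and Fubini to get $\|c\|_{L^1(\coprod_i\Omega,L^p(\coprod_j\Omega))}$; symmetrically for $d$. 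Every constant is $1$, so taking the infimum the left-hand side of \eqref{eq:4sumequiv} is at most the right-hand side.

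For ``$\gtrsim$'' I would build the decomposition explicitly by a two-parameter truncation, mimicking in two variables the single-level truncation behind the univariate Johnson--Schechtman inequality \eqref{JSchintr}. Fix levels $\lambda_1,\ldots,\lambda_n$ (for the $x$-copies) and $\mu_1,\ldots,\mu_n$ (for the $y$-copies) and set $a_{i,j}=f_{i,j}\mathbf{1}_{\{|f_{i,j}|>\lambda_i\}}\mathbf{1}_{\{|f_{i,j}|>\mu_j\}}$, $b_{i,j}=f_{i,j}\mathbf{1}_{\{|f_{i,j}|\leq\lambda_i\}}\mathbf{1}_{\{|f_{i,j}|\leq\mu_j\}}$, $c_{i,j}=f_{i,j}\mathbf{1}_{\{|f_{i,j}|>\lambda_i\}}\mathbf{1}_{\{|f_{i,j}|\leq\mu_j\}}$ and $d_{i,j}=f_{i,j}\mathbf{1}_{\{|f_{i,j}|\leq\lambda_i\}}\mathbf{1}_{\{|f_{i,j}|>\mu_j\}}$. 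These are functions of $(\xi,\upsilon)$ alone, they sum to $f_{i,j}$, and for each $(i,j)$ their supports are pairwise disjoint, which settles the last assertion of the theorem; it then remains to choose the levels so that, under \eqref{eq:lesssim1}, each of the four norms is $\lesssim1$.

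The strategy is to reduce each norm to a one-variable estimate and invoke both the univariate decomposition theorem (the case $m=1$, with a $p$-independent constant) and the weighted inequality (Theorem \ref{premainlemma}, i.e.\ Theorem \ref{intrweighted}). Take $c$ as the model. Since $c_{i,j}$ is a pointwise restriction of $f_{i,j}$, \eqref{eq:lesssim1} already bounds $\E_x\E_y\p{\sum_i C_i(x_i,y)^p}^{1/p}$, where $C_i(x_i,y):=\p{\sum_j|c_{i,j}(x_i,y_j)|^p}^{1/p}$; for fixed $y$ the $C_i$ are independent in $i$, one has $\E_y C_i(x_i,y)^p=\Gamma_i(x_i)^p$ with $\Gamma_i(x_i):=\p{\sum_j\E_{y_j}|c_{i,j}(x_i,y_j)|^p}^{1/p}$, and $\|c\|_{L^1(\coprod_i\Omega,L^p(\coprod_j\Omega))}=\sum_i\E_{x_i}\Gamma_i(x_i)$. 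Setting $W_i:=\min(C_i/\Gamma_i,1)$, a $[0,1]$-valued weight with $W_i\Gamma_i\leq C_i$, one gets $\E_x\E_y\p{\sum_i|W_i\Gamma_i|^p}^{1/p}\leq\E_x\E_y\p{\sum_i C_i^p}^{1/p}$, and Theorem \ref{intrweighted} applied with $X_i=\Gamma_i$ and $r=p$ converts the left-hand side into a fixed multiple of the Johnson--Schechtman decomposition norm of $(\Gamma_i)$; choosing the levels so that this decomposition norm is comparable to $\sum_i\E_{x_i}\Gamma_i$ yields $\|c\|\lesssim1$. The cases $d$, $a$, $b$ go the same way --- for $d$ with the roles of $(i,\lambda)$ and $(j,\mu)$ interchanged, for $a$ and $b$ using respectively that the surviving truncation is large in both directions or bounded by $\min(\lambda_i,\mu_j)$ --- each time passing to the conditional $\ell^p$-profile, using that its $L^p$-norm in the averaged-out variable rebuilds the profile of the averaged summands, and feeding the fluctuation of the other block of variables into the weighted inequality.

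The main obstacle --- and the reason the weighted inequality, rather than \eqref{JSchintr} itself, is needed --- is the simultaneous calibration of $(\lambda_i)$ and $(\mu_j)$: they must be chosen, quite possibly adaptively (for instance tied to the conditional profiles $\Gamma_i$ and their analogues), so that all four estimates hold at once and with a constant independent of $p$. Concretely, the hypothesis $\E(W_i\mid X_i)\geq\kappa$ of Theorem \ref{intrweighted} is a Paley--Zygmund-type non-degeneracy of the normalized profile $C_i/\Gamma_i$, and it is precisely the truncation at level $\lambda_i$ --- together with the fact that $|c_{i,j}|$ stays above $\lambda_i$ on its support --- that is meant to supply it. Making this bookkeeping close, and in particular keeping every constant free of $p$, is the technical core of the proof.
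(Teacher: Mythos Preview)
Your treatment of the $\lesssim$ direction is correct and is exactly the paper's argument. The content is entirely in $\gtrsim$, and there your proposal has a genuine gap which you yourself flag as ``the main obstacle'' but do not close.

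The ansatz $a_{i,j}=f_{i,j}\mathbf{1}_{\{|f_{i,j}|>\lambda_i\}}\mathbf{1}_{\{|f_{i,j}|>\mu_j\}}$ etc.\ with \emph{constant} levels $\lambda_i,\mu_j$ is too rigid, and your attempt to push it through with Theorem~\ref{intrweighted} breaks in two places. First, for your weight $W_i=\min(C_i/\Gamma_i,1)$ the hypothesis $\E(W_i\mid X_i)\ge\kappa$ is a genuine second-moment (Paley--Zygmund) control on $C_i^p/\Gamma_i^p$, and nothing in the truncation $|c_{i,j}|>\lambda_i$ supplies it: if the $c_{i,j}(\xi,\cdot)$ are tall spikes on tiny $y_j$-sets, then $C_i/\Gamma_i$ is $0$ on most of $y$ regardless of $\lambda_i$. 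Second, even if Theorem~\ref{intrweighted} applied, its output is the $L^1+L^p$ decomposition norm of $(\Gamma_i)_i$, which is \emph{smaller} than $\sum_i\E_{x_i}\Gamma_i=\|c\|_{L^1(\coprod_i\Omega,L^p(\coprod_j\Omega))}$; saying you will ``choose the levels so that this decomposition norm is comparable to $\sum_i\E_{x_i}\Gamma_i$'' is exactly the point to be proved, not an available degree of freedom. The same circularity reappears for $a,b,d$, and you give no mechanism to calibrate $(\lambda_i)$ and $(\mu_j)$ simultaneously.

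The paper's route is different and avoids both issues. It does not posit levels at all. Instead it first applies the one-variable Johnson--Schechtman to $F_i(\xi,y)=(\sum_j f_{i,j}(\xi,y_j)^p)^{1/p}$ for each fixed $y$, producing $\{0,1\}$-valued $w_i(\xi,y)$ with $\sum_i\int|w_iF_i|+(\sum_i\int|(1-w_i)F_i|^p)^{1/p}$ controlled. These $w_i$ depend on all of $y$, so they cannot yet serve as a decomposition of $f_{i,j}$. The weighted inequality is then used in the opposite direction from yours: not to \emph{check} a conditioning hypothesis on given weights, but to \emph{replace} the $y$-global weights $w_i$ by their conditional majorities $W_{i,j}(\xi,\upsilon)=\mathbf{1}_{\{\E_j w_i(\xi,\cdot)\ge 1/2\}}$, which by construction satisfy the $\kappa=\tfrac12$ hypothesis and depend only on $(\xi,\upsilon)$. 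A second one-variable Johnson--Schechtman in $j$ then splits $W_{i,j}f_{i,j}$ into the $a$ and $c$ pieces, and an analogous pass on the $(1-w_i)$ side gives $b$ and $d$. The resulting indicators are not level sets of $|f_{i,j}|$; they come from the $y$-dependent optimisation and its localisation. This is the missing idea in your proposal.
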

This is easily seen to be equivalent to the following, see Corollary \ref{4summandmz}. 
\begin{intrthm}\label{intr4summz}For $1\leq p\leq 2$ and $f\in U^p_2\p{\Omega^n}$ such that $f(x)=\sum_{i<j} f_{i,j}\p{x_i,x_j}$, 
\begin{align} \left\|f\right\|_{L^p\p{\Omega^n}}\simeq \inf_{f_{i,j}=a_{i,j}+b_{i,j}+c_{i,j}+d_{i,j}}& \p{\sum_{i<j}\int_{\Omega^2}\left|a_{i,j}\p{\xi,\upsilon}\right|^2\d\xi\d \upsilon}^\frac{1}{2}\\ 
+& \p{\sum_{i<j}\int_{\Omega^2}\left|b_{i,j}\p{\xi,\upsilon}\right|^p\d\xi\d \upsilon}^\frac{1}{p}\\
+& \p{ \sum_i\int_{\Omega}\p{\sum_{j>i}\int_{\Omega^2}\left|c_{i,j}\p{\xi,\upsilon}\right|^2\d \upsilon}^\frac{p}{2}\d\xi}^\frac{1}{p}\\
+& \p{ \sum_j\int_{\Omega}\p{\sum_{i<j}\int_{\Omega^2}\left|d_{i,j}\p{\xi,\upsilon}\right|^2\d\xi}^\frac{p}{2}\d \upsilon}^\frac{1}{p}.\end{align}
Moreover, the decomposition can be chosen such that $a_{i,j},b_{i,j},c_{i,j},d_{i,j}$ are mean zero in each variable. 
\end{intrthm}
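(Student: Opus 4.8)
The plan is to deduce Theorem \ref{intr4summz} from Theorem \ref{intr4sum} (i.e.\ Theorem \ref{4summand}) essentially by unwinding definitions, exploiting the fact that $\V^p_2$ is, up to equivalence of norms, a ``square function'' space. First I would recall that for $f\in\V^p_2(\Omega^n)$ with $f(x)=\sum_{i<j}f_{i,j}(x_i,x_j)$, Bourgain's inequality \cite[Proposition 7]{bourgwalsh} gives, for $1\le p\le 2$,
\[
\|f\|_{L^p(\Omega^n)}\simeq \left\| \Bigl(\sum_{i<j}\left|f_{i,j}(X_i,X_j)\right|^2\Bigr)^{1/2}\right\|_{L^p}
= \int_{\Omega^n}\Bigl(\sum_{i<j}\left|f_{i,j}(x_i,x_j)\right|^2\Bigr)^{p/2}\,\d x,
\]
and by Zinn's decoupling inequality \cite{zinn} this in turn is equivalent to the decoupled double integral
\[
\int_{\Omega^n}\int_{\Omega^n}\Bigl(\sum_{i<j}\left|f_{i,j}(x_i,y_j)\right|^2\Bigr)^{1/2}\,\d x\,\d y,
\]
with constants depending only on $p$ (and one can check from the explicit forms that even the $p$-dependence is harmless, or simply absorb it since the statement is for fixed $p$). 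Applying Theorem \ref{4summand} with exponent $2$ in place of $p$ and with the $n^2$ functions reindexed appropriately, I get a four-term decomposition $f_{i,j}=a_{i,j}+b_{i,j}+c_{i,j}+d_{i,j}$ with disjoint supports realizing the infimum on the right-hand side, and the four norms appearing there are exactly the four square-function norms in the statement of Theorem \ref{intr4summz} (the $L^1(L^2)$ norm becoming $\ell^2$-summation of $L^2$ norms, i.e.\ the ``$a$'' term; the $L^2$ term; and the two mixed $L^1(\Omega,L^2(\coprod\Omega))$ terms becoming the ``$c$'' and ``$d$'' terms with the $\ell^2$ inner sums and $p$-th power outer integrals).

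The one genuine extra ingredient beyond quoting Theorem \ref{4summand} is the ``moreover'' clause: the decomposition can be taken mean zero in each variable. For this I would use a projection argument. Let $Q$ denote the operator that replaces $g(x_i,x_j)$ by its mean-zero-in-each-variable part, i.e.\ $Qg = g - \E_{x_i}g - \E_{x_j}g + \E_{x_i}\E_{x_j}g$, acting coordinatewise on the family $(f_{i,j})$. Since each $f_{i,j}$ is already mean zero in each variable (as $f\in\V^p_2$), applying $Q$ to the decomposition $f_{i,j}=a_{i,j}+b_{i,j}+c_{i,j}+d_{i,j}$ leaves the left side unchanged, so $f_{i,j}=Qa_{i,j}+Qb_{i,j}+Qc_{i,j}+Qd_{i,j}$ is again a valid decomposition, now with mean-zero components. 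It then remains to check that $Q$ does not increase (up to an absolute constant) any of the four norms on the right-hand side of Theorem \ref{intr4summz}. Each of these is a contraction: the conditional expectation $\E_{x_i}$ is a contraction on $L^2(\d x_i)$ for fixed $x_j$ hence on all the relevant mixed norms by Minkowski/Jensen, and similarly for $\E_{x_j}$; composing the four resulting maps costs at most a factor $4$ in each norm. (If one insists on disjoint supports after applying $Q$, that property is lost, but it is not asserted in Theorem \ref{intr4summz}; alternatively one can note that $Q$ preserves disjointness of supports when the means are supported appropriately, but this is not needed.)

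For the reverse inequality ``$\gtrsim$'' in the equivalence of Theorem \ref{intr4summz}, I would argue directly: given any mean-zero decomposition $f_{i,j}=a_{i,j}+b_{i,j}+c_{i,j}+d_{i,j}$, the subadditivity of $f\mapsto\|f\|_{L^p}$ reduces the task to bounding $\|\sum_{i<j}a_{i,j}(X_i,X_j)\|_{L^p}$ by the $\ell^2$-of-$L^2$ norm of $(a_{i,j})$ and likewise for $b,c,d$. Using Bourgain's square-function equivalence again, $\|\sum a_{i,j}(X_i,X_j)\|_{L^p}\simeq \|(\sum|a_{i,j}(X_i,X_j)|^2)^{1/2}\|_{L^p}\le \|(\sum|a_{i,j}(X_i,X_j)|^2)^{1/2}\|_{L^2}=(\sum_{i<j}\|a_{i,j}\|_{L^2}^2)^{1/2}$, which is the first target term; for $b$ we bound the $L^p$ norm of the sum by the $L^p$ norm of the square function and then, since $p\le 2$, estimate the $\ell^2$ square function pointwise by the $\ell^p$ one, giving $(\sum\|b_{i,j}\|_{L^p}^p)^{1/p}$; for $c$ and $d$ we apply Bourgain's inequality in the ``partially conditioned'' form, i.e.\ first in the $x_j$ variables for fixed $x_i$ to pass to $\bigl(\sum_{j>i}|c_{i,j}(x_i,X_j)|^2\bigr)^{1/2}$, then take the $L^p(\d x_i)$ norm (here using that the $c_{i,j}$ are mean zero precisely so the inner conditional expectation vanishes and the square function equivalence applies in the inner variables). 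The main obstacle I anticipate is bookkeeping rather than conceptual: making sure the passage between the ``$f_{i,j}\in L^1(\R^m)$, mean zero'' picture and the ``functions on $\Omega^n$'' picture is done with the correct identification of $\coprod_i\Omega$ with the index set times $\Omega$, and checking that all constants genuinely do not blow up with $n$; the $p$-independence of the constant, which is the headline point, is inherited directly from Theorem \ref{4summand} (applied at exponent $2$) and from the fact that Bourgain's and Zinn's inequalities are invoked only for the fixed chosen $p\in[1,2]$.
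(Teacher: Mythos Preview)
Your handling of the ``moreover'' clause via the projection $Q=(\id-\E)\otimes(\id-\E)$ is exactly what the paper does (it calls this $P_2$), and your reverse inequality, while more laborious than necessary, is essentially fine. The problem is in the forward direction, where you have lost track of the exponent.

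You write that Bourgain's inequality plus Zinn's decoupling give
\[
\|f\|_{L^p}\simeq \int_{\Omega^n}\int_{\Omega^n}\Bigl(\sum_{i<j}|f_{i,j}(x_i,y_j)|^2\Bigr)^{1/2}\,\d x\,\d y,
\]
but this is false for $p\ne 1$: decoupling preserves the outer exponent, so what you actually get is
\[
\|f\|_{L^p}^p\simeq \int_{\Omega^n}\int_{\Omega^n}\Bigl(\sum_{i<j}|f_{i,j}(x_i,y_j)|^2\Bigr)^{p/2}\,\d x\,\d y.
\]
Consequently, applying Theorem \ref{4summand} at exponent $2$ to the $f_{i,j}$ gives you a decomposition of the wrong quantity, and the four norms you obtain do not match those in the statement: you get outer $L^1$ and $\ell^1$ where the target has outer $L^p$ and $\ell^p$ (your description ``the $L^1(L^2)$ norm becoming $\ell^2$-summation of $L^2$ norms'' is simply not what Theorem \ref{4summand} says).

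The missing step is the convexification the paper performs: set $g_{i,j}=|f_{i,j}|^p$ and apply Theorem \ref{4summand} at exponent $2/p\ge 1$ to the $g_{i,j}$. The left-hand side becomes $\int\int(\sum g_{i,j}^{2/p})^{p/2}=\int\int(\sum|f_{i,j}|^2)^{p/2}$, which is the correct quantity. On the right, the disjoint-support clause lets you write $g_{i,j}$'s decomposition as $|a_{i,j}|^p+|b_{i,j}|^p+|c_{i,j}|^p+|d_{i,j}|^p$ with $a_{i,j}+b_{i,j}+c_{i,j}+d_{i,j}=f_{i,j}$, and then the four norms in Theorem \ref{4summand} at exponent $2/p$, raised to the power $1/p$, are precisely the four norms in Theorem \ref{intr4summz} (with the roles of $a$ and $b$ swapped). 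Without this convexification the argument does not go through.
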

Both Theorems \ref{intr4sum} and \ref{intr4summz} have natural $m$-variable extensions (see Theorems \ref{2msummand} and \ref{2msummandmz} for details). Finally, Theorems \ref{intrweighted} and \ref{intr4summz} are later utilized by us to prove that Hoeffding subspaces enjoy the mentioned interpolation properties \eqref{intrbkcl} and \eqref{intrbpx}, compare Theorems \ref{kclvml1} and \ref{bourgu1m}. 
\begin{intrthm}The couple $\p{U^1_m\p{\ell^1},U^1_m\p{\ell^2}}$ is $K$-closed in 
$\p{L^1\p{\ell^1},L^1\p{\ell^2}}$.\end{intrthm}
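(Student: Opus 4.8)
The plan is to establish $K$-closedness in the equivalent $K$-functional form: for every $f\in U^1_m(\ell^2)=U^1_m(\ell^1)+U^1_m(\ell^2)$ and every $t>0$,
\[ K\big(t,f;U^1_m(\ell^1),U^1_m(\ell^2)\big)\lesssim K\big(t,f;L^1(\ell^1),L^1(\ell^2)\big),\]
the reverse inequality being immediate from the isometric inclusions $U^1_m(\ell^q)\subset L^1(\ell^q)$. By density we may take $f=(f_n)_n$ with only finitely many nonzero $f_n$, each $f_n=\sum_{|S|=m}f_{S,n}(x_S)$ with finitely many nonzero terms, each mean zero in its variables. Since the $K$-functional of an $L^1$-space with values in a Banach couple is computed pointwise, $K(t,f;L^1(\ell^1),L^1(\ell^2))=\int_{\Omega^\infty}K\big(t,(f_n(\omega))_n;\ell^1,\ell^2\big)\,d\omega$, and Holmstedt's formula makes the inner functional explicit up to absolute constants: it routes the $t^2$ largest coordinates of the sequence to the $\ell^1$-pile and the remaining, small coordinates to the $\ell^2$-pile. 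Thus the target quantity is concrete; what must be produced is a single decomposition $f=g+h$ with $g\in U^1_m(\ell^1)$, $h\in U^1_m(\ell^2)$ realizing it up to a constant. The obstruction is that the naive pointwise thresholding destroys the mean-zero product structure, and it cannot be repaired by projecting onto $U^1_m$, whose natural projection is unbounded on $L^1$.

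First I would record the intrinsic descriptions furnished by the moment theorems. On the one hand $\|g\|_{L^1(\ell^1)}=\sum_n\|g_n\|_{L^1(\Omega^\infty)}=\sum_n\|g_n\|_{U^1_m(\Omega^\infty)}$, so $U^1_m(\ell^1)=\ell^1\big(U^1_m(\Omega^\infty)\big)$, and by the mean-zero decomposition (Theorem \ref{2msummandmz}, in the $m$-variable form) each $\|g_n\|_{U^1_m}$ is equivalent, with a constant \emph{independent of $p$} and in particular valid at $p=1$, to the infimum over mean-zero-preserving splittings $g_{S,n}=\sum_{T\subseteq[m]}g^T_{S,n}$ of $\sum_T\|g^T\|_{E_T}$, where $E_T$ is the mixed square-function norm ``$L^1$ in the coordinates indexed by $T$, $L^2$ in the coordinates indexed by $[m]\setminus T$''. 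On the other hand, applying the same theorem in its Hilbert-space-valued form (the argument going through verbatim for functions with values in a Hilbert space) to the $\ell^2_n$-valued mean-zero $U$-statistic $\sum_S(f_{S,n})_n$, one gets that $\|f\|_{L^1(\ell^2)}$ is equivalent to the analogous infimum, with the $\ell^2$ now aggregating both the $U$-index $S$ and the sequence index $n$. Matching these two pictures, the problem reduces to choosing, \emph{simultaneously over all $S$ and $n$}, a mean-zero splitting of each $f_{S,n}$ into a part destined for $g$ (to be measured in $\ell^1$ over $n$) and a part destined for $h$ (to be measured in $\ell^2$ over $n$), coherently enough that both the resulting mixed norms are bounded by the corresponding piles of $K(t,f;L^1(\ell^1),L^1(\ell^2))$.

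That splitting is performed by a scale-$t$ truncation inside the $U$-statistic, and this is where the weighted Johnson--Schechtman inequality (Theorem \ref{premainlemma}) enters. For each block one introduces a $\{0,1\}$-valued weight equal to the indicator of the event ``this block exceeds the threshold set by $t$'', which on the relevant conditional events satisfies $\E(W_i\mid X_i)\geq\kappa$; Theorem \ref{premainlemma} then yields $\E(\sum_i|W_iX_i|^r)^{1/r}\gtrsim\inf_{X_i=Y_i+Z_i}(\sum_i\E Y_i^r)^{1/r}+\sum_i\E Z_i$, i.e. the large part of a block can be extracted into an $L^r$-controlled term and the small part into an $L^1$-controlled term losing only an absolute constant. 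Read at the level of the $U$-statistic $f$, this supplies the matching lower bound for $K(t,f;L^1(\ell^1),L^1(\ell^2))$ at each scale — the scale being encoded in the weights — so that, after feeding it through each of the $2^m$ mixed norms of the moment decomposition and reassembling, the pieces marked for the $\ell^1$-pile form $g\in U^1_m(\ell^1)$ and those marked for the $\ell^2$-pile form $h\in U^1_m(\ell^2)$, with $\|g\|_{L^1(\ell^1)}+t\|h\|_{L^1(\ell^2)}\lesssim K(t,f;L^1(\ell^1),L^1(\ell^2))$. Iterating the two-variable argument, or using the $m$-variable theorem directly, handles general $m$.

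The main obstacle is exactly the passage from the scalar moment inequalities — which control a single (mean-zero, after the square function) $U$-statistic — to the $\ell^2$-valued situation demanded by $K$-closedness: one must decompose every coordinate $f_{S,n}$ at once, with the decompositions coherent enough that the $\ell^1$-over-$n$ norm of the $g$-pieces and the $\ell^2$-over-$n$ norm of the $h$-pieces are simultaneously under control and every piece is still mean zero in each variable. It is the independence of the constant on $p$ in Theorem \ref{2msummand}/\ref{2msummandmz} that makes this uniform ``vectorization'' possible — it substitutes for the $L^1$-boundedness of the Hoeffding projection that one does not have — and, combined with the scale sensitivity supplied by the weighted inequality of Theorem \ref{premainlemma}, it is the crux of the argument.
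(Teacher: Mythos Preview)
Your proposal identifies the right ingredients but misses the structural move that actually drives the proof. You propose to apply the $m$-variable decomposition (Theorem~\ref{2msummandmz}) to each coordinate $f_n$ separately and to an $\ell^2$-valued version of $f$, and then to glue these via a ``scale-$t$ truncation'' using Theorem~\ref{premainlemma} with weights coming from a threshold. This last step is where the argument is only a gesture: you never say what the threshold is, why the resulting weights satisfy the conditional lower bound $\E(W\mid X)\ge\kappa$ that Theorem~\ref{premainlemma} requires, or why the pieces you produce land back in $U^1_m$ with the right mixed norms. In the paper, Theorem~\ref{premainlemma} is not used in this way at all; it lives \emph{inside} the inductive proof of Theorem~\ref{2msummand}, where the weights are the $\{0,1\}$-functions $w^{(J)}_i$ produced by the previous step of the induction, not indicators of a size threshold.

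What the paper does instead is conceptually cleaner: after a dilation in the auxiliary variable (this is why the target is phrased as $L^p(\mathbb{R})$ rather than $\ell^p$) one reduces to $t=1$, splits $\mathbb{R}$ into unit intervals indexed by $j$, and then treats the sequence index $j$ as an \emph{extra} $(m{+}1)$-st variable of the $U$-statistic. One application of the $(m{+}1)$-variable Theorem~\ref{2msummand} to $(f_{i,j})$ yields $2^{m+1}$ pieces $a^{(J)}$, $J\subset[1,m{+}1]$; those with $m{+}1\notin J$ are assembled into $g\in U^1_m(L^1)$ and those with $m{+}1\in J$ into $h\in U^1_m(L^2)$, and the norms match exactly because the $L^1/L^2$ split in the last coordinate is precisely the $\ell^1/\ell^2$ split you need. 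Your remark that ``the $\ell^2$ aggregates both the $U$-index and the sequence index'' is the right intuition, but the mechanism that realizes it is going up one level in the decomposition theorem, not a separate threshold-and-weight step.
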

\begin{intrthm}The couple $\p{U^1_m,U^2_m}$ is $K$-closed in $\p{L^1,L^2}$.\end{intrthm}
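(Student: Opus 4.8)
The plan is to reduce the statement to the decomposition theorem for $\|\cdot\|_{U^p_m}$ (Theorem~\ref{2msummandmz}), making essential use of the fact that its constants do not depend on $p\in[1,2]$. Throughout write $M=\coprod_i\Omega$ for the $\sigma$-finite disjoint union, and for $S\subseteq\{1,\dots,m\}$ let $E^p_S$ denote the mixed-norm space on $M^m$ which is $L^p$ in the coordinates indexed by $S$ and $L^2$ in the coordinates indexed by its complement $S^c$; thus $E^2_S=L^2(M^m)$ for every $S$, while $E^1_S$ ranges from $L^2(M^m)$ (at $S=\emptyset$) to $L^1(M^m)$ (at $S=\{1,\dots,m\}$). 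These are precisely the summands occurring in Theorem~\ref{2msummandmz}. Note first that the couple makes sense: on a probability space $L^2$-convergence implies $L^1$-convergence, so $U^2_m\subseteq U^1_m$.

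Next I would set up the model couple. Let $\Phi$ be the Hoeffding-coefficient map $f=\sum_{i_1<\dots<i_m}f_{i_1,\dots,i_m}(x_{i_1},\dots,x_{i_m})\mapsto(f_{i_1,\dots,i_m})$; it does not depend on the ambient $L^p$, it is an isometry of $U^2_m$ onto a closed subspace of $L^2(M^m)$ by orthogonality of the Hoeffding components, and by Theorem~\ref{2msummandmz} it identifies $U^p_m$, with constants independent of $p\in[1,2]$, with the interpolation-sum space $\Sigma_S E^p_S$ (the quotient of $\bigoplus_S E^p_S$ by the summation relation). Hence $\Phi$ is a simultaneous isomorphism of couples $(U^1_m,U^2_m)\cong(\Sigma_S E^1_S,\Sigma_S E^2_S)$ with absolute constants. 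Combining the standard identity for the $K$-functional of a sum of couples with the elementary fact that the $K$-functional of a Hilbert-valued couple $(L^1(N;H),L^2(N;H))$ agrees, up to an absolute constant, with the scalar $K$-functional of $n\mapsto\|g(n)\|_H$ in $(L^1(N),L^2(N))$, one obtains for $f\in U^1_m$
\[
K\bigl(t,f;U^1_m,U^2_m\bigr)\ \simeq\ \inf_{\,\sum_S a^S=\Phi f\,}\ \sum_{S}\,K\Bigl(t,\ \bigl\|a^S\bigr\|_{L^2(M^{S^c})}\ ;\ L^1(M^S),\ L^2(M^S)\Bigr),
\]
the outer infimum over all decompositions of the coefficient family $\Phi f$.

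Since $U^1_m\subseteq L^1$ and $U^2_m\subseteq L^2$, the bound $K(t,f;U^1_m,U^2_m)\gtrsim K(t,f;L^1(\Omega^\infty),L^2(\Omega^\infty))$ is automatic; and for $t\geq1$ the reverse is immediate because $K(t,f;U^1_m,U^2_m)\le\|f\|_{L^1}\le K(t,f;L^1,L^2)$. So, after normalising $K(t,f;L^1,L^2)=1$, it remains to show for $0<t\le1$ that the right-hand side displayed above is $\lesssim 1$; equivalently, that there is a decomposition $\Phi f=\sum_S a^S$ admitting, summand by summand, a splitting of $\|a^S\|_{L^2(M^{S^c})}$ with total $L^1(M^S)+t\,L^2(M^S)$-cost $\lesssim 1$. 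This is a ``$K$-functional upgrade'' of the decomposition theorem, and it is where the real work lies.

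The naive attempt is to feed a near-optimal splitting $f=g_0+g_1$ (with $\|g_0\|_{L^1}\le1$, $\|g_1\|_{L^2}\le t^{-1}$) through the Hoeffding projection $\Pi_m$, letting $\Pi_m g_1$ be the $U^2_m$-part and $\Pi_m g_0=f-\Pi_m g_1$ the $U^1_m$-part; this fails, because $\Pi_m$ is unbounded on $L^1$, so $\Pi_m g_0$ has no controlled $U^1_m$-norm. Handling the $L^1$-part \emph{without} a bounded projection is precisely the difficulty. Instead I would re-run the proof of the multivariate decomposition theorem --- hence ultimately the weighted Johnson--Schechtman inequality (Theorem~\ref{premainlemma}), which bounds the relevant decomposition functional from above by a \emph{weighted} $\ell^r$-moment --- with the truncation level of the global $(L^1,L^2)$-functional inserted into the weights: choose $\lambda=\lambda(t,f)$ with $K(t,f;L^1,L^2)\simeq\|f\chi_{\{|f|>\lambda\}}\|_{L^1}+t\|f\chi_{\{|f|\le\lambda\}}\|_{L^2}$, apply Theorem~\ref{premainlemma} with weights built from $\chi_{\{|f|\le\lambda\}}$ (the hypothesis $\E(W_i\mid X_i)\geq\kappa$ being exactly what lets one pull such a global cut back to a single coordinate), and peel the coordinates off one at a time by conditioning, as in the induction on $m$ behind Theorem~\ref{2msummandmz}; the $p$-freeness of the constant in Theorem~\ref{premainlemma} is what guarantees no loss as $t$, the effective truncation level, varies. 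Finally one checks, as in the passage from Theorem~\ref{2msummand} to Theorem~\ref{2msummandmz}, that the components can be taken mean zero in each variable, so that the resulting splitting of $f$ genuinely lands in $U^1_m+U^2_m$. The main obstacle is exactly this: marrying a single global level-set truncation of the $U$-statistic $f$ with the $2^m$-fold mixed-norm decomposition at the level of Hoeffding coefficients, with no bounded projection available --- everything hinges on extracting a genuinely pointwise-in-$t$ statement from the $p$-independent weighted inequality.
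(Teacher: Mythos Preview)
Your reduction to the identity
\[
K\bigl(t,f;U^1_m,U^2_m\bigr)\ \simeq\ \inf_{\sum_S a^S=\Phi f}\ \sum_{S}\,K\Bigl(t,\ \bigl\|a^S\bigr\|_{L^2(M^{S^c})};\ L^1(M^S),\ L^2(M^S)\Bigr)
\]
is correct, but the step you flag as ``the real work'' is not carried out, and the sketch you give for it does not work. You propose to feed the global level-set cut $\chi_{\{|f|\le\lambda\}}$ on $\Omega^n$ into Theorem~\ref{premainlemma} as a weight. That indicator depends on \emph{all} coordinates and there is no reason for its conditional expectations $\E_i\bigl(\chi_{\{|f|\le\lambda\}}\bigr)$ to be bounded below by a fixed $\kappa>0$, so the hypothesis of the weighted lemma is simply unavailable. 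In every application of Theorem~\ref{premainlemma} in Section~\ref{secdecomp} the weights arise from a previous Johnson--Schechtman split and satisfy $\sum_J w^{(J)}=1$, which is precisely what forces at least one of them to have conditional expectation $\ge 2^{-m}$; a bare level-set indicator of a $U$-statistic carries no such complementarity. More fundamentally, the cut $\{|f|\le\lambda\}$ destroys the Hoeffding structure: $f\chi_{\{|f|\le\lambda\}}$ is no longer in $U_m$, so neither Marcinkiewicz--Zygmund nor decoupling applies to it, and there is no obvious way to pass from this truncation back to mixed norms of the coefficients $f_{i_1,\dots,i_m}$. The $p$-independence of the constants in Theorem~\ref{2msummandmz} is necessary for the identification you wrote down to be uniform, but it does not by itself produce a pointwise-in-$t$ estimate.

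The paper avoids this difficulty altogether by proving the \emph{vector-valued} statement first (Theorem~\ref{bourgu1m}): $\bigl(U^1_m(L^1(\R)),\,U^2_m(L^2(\R))\bigr)$ is $K$-closed in $\bigl(L^1(L^1),\,L^2(L^2)\bigr)$, and the scalar theorem follows by restricting to functions with values in $\mathrm{span}\{\mathbbm{1}_{[0,1]}\}$. The point of the extra $\R$-variable is that one can \emph{rescale} it to reduce to $t=1$; at $t=1$ the ambient $K$-functional is the $(L^1+L^2)(\Omega^n\times\R)$-norm, which after chopping $\R$ into unit intervals and applying Johnson--Schechtman in the $\R$-direction becomes an integral of a square function over the extra index $j$. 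Because the truncation happens in the $\R$-direction, each $f_j$ is still in $U_m$ with respect to $\Omega^n$, so Bourgain's square-function estimate and decoupling apply untouched, and then Theorem~\ref{premainlemma} is used exactly as in Section~\ref{secdecomp}, with weights coming from a prior Johnson--Schechtman step (hence summing to~$1$). The resulting decomposition has one fewer summand than the full $2^{m+1}$, since the $\R$-direction contributes only the two halves $L^1$ and $L^2$ rather than all mixed norms.
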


The paper is organized as follows. In Section \ref{secoverview}, we recall some definitions and tools to be used later. In Section \ref{secweighted}, we prove a weighted generalization of the classical Johnson--Schechtman inequality, as a byproduct getting a new proof of the historic result with an absolute constant. This weighted inequality will be applied in Section \ref{secdecomp} to remove the obstacles that arise while iterating one-variable results leading to a Johnson--Schechtman type decomposition for low moments of nonnegative $U$-statistics. In Section \ref{secinterp}, we apply the resulting decomposition theorems to obtain results about real interpolation of $\V^p_m\p{\Omega^\infty}$ and $\V^1_m\p{\Omega^\infty,\ell^p}$ spaces between $p=1$ and $p=2$. 

\section*{Acknowledgements}Most of results of this paper are taken from my doctoral thesis \cite{diss}. I am grateful to my advisors: Fedor Nazarov and Michał Wojciechowski for their mentorship and support, especially their unending willingness to discuss my research.

\section{Overview of basic notions and facts}\label{secoverview}
\textbf{Notation.} $\gtrsim,\lesssim,\simeq$ denote $\geq,\leq,=$ respectively up to a consatnt. The expression $\|f\|_{L^p(X)}$ is defined as $\p{\int \|f\|_X^p}^\frac1p$ for all $0<p<\infty$, which makes it a norm for $p\geq 1$ and a quasinorm for $p\in (0,1)$. \par
\textbf{Probability spaces and conditional expectations.} In all of the text, $\left(\Omega,\mathcal{F},\mu\right)$ will be a probability space. We will equip sets of the form $\Omega^I$, where $I$ is an at most countable index set, with the product measure $\mu^{\otimes I}$ defined on $\mathcal{F}^{\otimes I}$. In case we are only concerned with the cardinality of $I$, we will write $\Omega^n$, where $n$ is a natural number or $\infty$. By the natural filtration on $\Omega^\mathbb{N}$ we mean the filtration $\left(\mathcal{F}_n:n=0,1,\ldots\right)$, where $\mathcal{F}_k$ is generated by the coordinate projection $\omega\mapsto \left(\omega_1,\ldots,\omega_k\right)$ and denote $\E_k=\E\left(\cdot\mid\mathcal{F}_k\right)$. In general, for a subset $A$ of the index set, $\mathcal{F}_A$ will be the sigma algebra generated by the coordinate projection $\omega\mapsto\left(\omega_i\right)_{i\in A}$ and $\E_A=\E\left(\cdot\mid\mathcal{F}_A\right)$. In more explicit terms, measurability with respect to $\mathcal{F}_A$ is equivalent to being dependent only on variables with indices belonging to $A$ and the conditional expectation operator $\E_A$ integrates away the dependence on all other variables, so that the formulas 
\beq \E_k f\p{x}= \int_{\Omega^{[k+1,\infty)}} f\p{x_1,\ldots,x_k,y_{k+1},y_{k+2},\ldots}\d \mu^{\otimes [k+1,\infty)}(y),\eeq
\beq \E_A f(x)= \int_{\Omega^{\N\setminus A}}f\p{ x_A,y_{\N\setminus A}}\d \mu^{\otimes \N\setminus A}(y)\eeq
are satisfied (with the convention that sequences indexed by $A$ and $\N\setminus A$ are merged in a natural way into a sequence indexed by $\N$). It will often be convenient to identify a function $f$ defined on $\Omega^A$ with an $\mathcal{F}_A$-measurable function $\Omega^I\ni\omega \mapsto f\p{ \p{\omega_i}_{i\in A}}$. In order to save space, we will often write $\d x$ instead of $\d \mu(x)$ whenever the measure is implied by context.  \par

\textbf{Tensor products.} Let $1\leq p<\infty$. For $f_k\in L^p\p{\Omega_k}$, we will denote by $\bigotimes_{k=1}^n f_k$ the function on $\prod_k \Omega_k$ satisfying
\beq \p{\bigotimes_k f_k}(x)=\prod_k f_k\p{x_k}.\eeq
Because of separation of variables, we have $\left\|\bigotimes_k f_k\right\|_{L^p\p{\prod_k \Omega_k}}= \prod_k \left\|f_k\right\|_{L^p\p{\Omega_k}}$. This way we actually define an injection of the algebraic tensor product $\bigotimes_k L^p\p{\Omega_k}$ into $L^p\p{\prod_k \Omega_k}$, the image of which is dense. \par
Let $X_k$ be subspaces (by a subspace we always mean a closed linear subspace) of $L^p\p{\Omega_k}$. By $\bigotimes_k X_k$ we will denote the subspace of $L^p\p{\prod_k \Omega_k}$ spanned by functions of the form $\bigotimes_k f_k$, where $f_k\in X_k$, and the norm is inherited from $L^p\p{\prod_k \Omega_k}$ (care has to be taken, as $\bigotimes_k X_k$ is not determined solely by $X_k$ as Banach spaces, but rather by the particular way they are embedded in $L^p\p{\Omega_k}$). If $T_k: X_k\to L^p\p{\Omega_k}$ are bounded operators, then we can define an operator $\bigotimes_k T_k: \bigotimes_k X_k\to L^p\p{\prod_k \Omega_k}$ by the formula
\beq \p{\bigotimes_k T_k}\p{\bigotimes_k f_k}= \bigotimes_k T_kf_k,\eeq
and easily check that the property 
\beq \left\|\bigotimes_k T_k:\bigotimes_k X_k\to L^p\p{\prod_k \Omega_k}\right\|\leq \prod_k \left\|T_k:X_k\to L^p\p{\Omega_k}\right\|\eeq
is satisfied. Indeed, $\bigotimes_k T_k= \prod_k \id_{L^p\p{\prod_{j\neq k}\Omega_j}}\otimes T_k$, and any operator of the form $\id\otimes T$ has norm bounded by $\|T\|$, because $\p{\id\otimes T}f\p{\omega_1,\omega_2}= T\p{f\p{\omega_1,\cdot}}\p{\omega_2}$. \par

\textbf{Khintchine's and related inequalities.} First, we recall the classical Khintchine-Kahane inequality. 
\begin{thm}[Khintchine for $B=\mathbb{C}$, Kahane for $B$ Banach]Let $X_1,\ldots,X_n$ be vectors in a Banach space $B$ and $r_1,\ldots,r_n$ be Rademacher variables (i.e. independent random variables, each of them attaining $\pm 1$ with probability $\frac12$). Then, for $1\leq p,q<\infty$,
\beq \p{\E\left\|\sum r_i X_i\right\|_B^p}^\frac1p \simeq_{p,q} \p{\E\left\|\sum r_i X_i\right\|_B^q}^\frac1q.\eeq\end{thm}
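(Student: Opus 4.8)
The plan is to prove the scalar case (Khintchine) and the vector case (Kahane) simultaneously by the classical Hoffmann--J\o rgensen iteration, which uses nothing about $B$ beyond the norm axioms; for $B=\mathbb{C}$ alone one could instead invoke the Bonami hypercontractive estimate to reach the optimal constants, but that refinement is not needed here. Write $S=\sum_i r_iX_i$ and $S_k=\sum_{i\le k}r_iX_i$. Since the underlying measure is a probability measure, the power-mean inequality gives $\n{S}_q\le\n{S}_p$ whenever $q\le p$, so --- the asserted equivalence being symmetric in $p$ and $q$ --- everything reduces to producing, for each $p\in[1,\infty)$, a constant $C_p$ with $\n{S}_p\le C_p\,\E\n{S}$: granting that, $\n{S}_p\le C_p\,\E\n{S}\le C_p\,\n{S}_q$ for every $q\ge1$. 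By the tail formula $\E\n{S}^p=\int_0^\infty p\lambda^{p-1}\mathbb{P}(\n{S}>\lambda)\,\mathrm{d}\lambda$ it is in turn enough to exhibit a radius $t_0\lesssim\E\n{S}$ for which $\mathbb{P}(\n{S}>\lambda t_0)$ decays at least geometrically in $\lambda$.

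First I would record two elementary facts. (i) For any $a,b$ in a normed space, $\tfrac12\n{a+b}+\tfrac12\n{a-b}\ge\tfrac12\n{(a+b)-(a-b)}=\n{b}$; freezing the $r_i$ with $i\neq j$ and averaging over $r_j=\pm1$ gives $\E\n{S}\ge\n{X_j}$, hence $\max_i\n{X_i}\le\E\n{S}$. (ii) L\'evy's maximal inequality $\mathbb{P}\p{\max_{k\le n}\n{S_k}>t}\le 2\,\mathbb{P}\p{\n{S_n}>t}$, a consequence of the symmetry of the increments $r_iX_i$ via reflection at a first passage time; applied to a reordering of the summands it also yields $\mathbb{P}(\n{S_n-S_k}>t)\le 2\,\mathbb{P}(\n{S_n}>t)$ for every $k$. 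The core estimate is then the Hoffmann--J\o rgensen splitting inequality: for all $s,t>0$,
\beq \mathbb{P}\p{\n{S_n}>2t+s}\le 4\,\mathbb{P}\p{\n{S_n}>t}^{2}+\mathbb{P}\p{\max_i\n{X_i}>s}.\eeq
I would prove this by conditioning on the first passage time $\tau=\min\{k:\n{S_k}>t\}$: on $\{\n{S_n}>2t+s\}$ one has $\tau\le n$, and on the further event $\{\tau=k\}$ either $\n{X_k}>s$, or else $\n{S_k}\le t+\n{X_k}\le t+s$ forces $\n{S_n-S_k}>t$; since $\{\tau=k\}$ depends only on $r_1,\dots,r_k$ while $S_n-S_k$ depends only on $r_{k+1},\dots,r_n$, summing the resulting independent product bounds over $k$ and applying (ii) twice gives the displayed inequality.

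Then I would run the iteration. By (i) we may take $s:=t_0:=8\,\E\n{S}$, which makes the last term of the splitting inequality vanish, while Markov's inequality gives $\mathbb{P}(\n{S}>t_0)\le\tfrac18$. Feeding the splitting inequality back with this fixed $s$ and $t$ running over $t_0,3t_0,7t_0,\dots$ produces the recursion $a_{j+1}\le 4a_j^2$ for $a_j:=\mathbb{P}\p{\n{S}>(2^{j+1}-1)t_0}$, whence $4a_j\le(4a_0)^{2^{j}}\le 2^{-2^{j}}$ and therefore $\mathbb{P}(\n{S}>\lambda t_0)\lesssim 2^{-c\lambda}$ for $\lambda\ge 1$ and an absolute $c>0$. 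Tail integration now gives $\n{S}_p\le C_p\,t_0=8C_p\,\E\n{S}$ with $C_p<\infty$, and by the reduction in the first paragraph this finishes the proof.

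The one genuinely substantive step --- and the one I expect to cost the most care --- is the interplay between the splitting inequality and fact (i): it is precisely $\max_i\n{X_i}\le\E\n{S}$ that lets $s$ be chosen $\asymp\E\n{S}$ and the ``jump'' term be switched off permanently, which is what makes the recursion self-improving. Without it the argument stalls in spaces such as $\ell^\infty_n$, where a naive bounded-differences (Azuma) estimate controls $\n{S}$ only at the scale $\p{\sum_i\n{X_i}^2}^{1/2}$, a quantity that can be far larger than $\E\n{S}$.
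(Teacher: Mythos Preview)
The paper does not prove this theorem; it is merely recalled in Section~\ref{secoverview} as a classical fact, with no argument supplied. Your Hoffmann--J\o rgensen proof is correct and is one of the standard routes to Kahane's inequality: the deterministic bound $\max_i\n{X_i}\le\E\n{S}$ from (i) kills the jump term in the splitting inequality, Lévy's inequality furnishes the factor $4$, and the recursion $4a_{j+1}\le(4a_j)^2$ then gives the (in fact doubly exponential) tail decay needed for tail integration. One tiny cosmetic point: in this setting the $X_i$ are fixed vectors, so $\mathbb{P}\p{\max_i\n{X_i}>s}$ is literally $0$ or $1$; your choice $s=8\,\E\n{S}$ makes it $0$ by (i), which is exactly what you use, so the argument is fine as written.
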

In particular, $\E\left|\sum r_i z_i\right|\simeq \sqrt{\sum\left|z_i\right|^2}$ for $z_i\in \mathbb{C}$. As a consequence we obtain the Marcinkiewicz-Zygmund moment inequality. 
\begin{lem}\label{marzyglp}Let $S$ be a (not necessarily probability) measure space. If $X_1,\ldots,X_n$ are independent $L^p(S)$-valued mean $0$ random variables ($1\leq p<\infty$), then \[\E\n{\sum X_i}_{L^p(S)}\simeq_p \E\n{\sqrt{\sum \left|X_i\right|^2}}_{L^p(S)}.\]\end{lem}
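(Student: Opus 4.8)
The plan is to reduce the claim, by symmetrization, to an inequality for Rademacher sums, then to apply the scalar Khintchine inequality fibrewise over $S$, and finally to invoke the Kahane inequality in the Banach space $L^p(S)$ in order to reconcile the orders of the moments appearing on the two sides.

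First I would symmetrize. Let $\p{X_i'}_i$ be an independent copy of $\p{X_i}_i$ and let $\p{\varepsilon_i}_i$ be Rademacher variables independent of everything else. Using $\E X_i=0$ together with Jensen's inequality to insert or remove a copy of $X_i'$ inside the norm, and using that $\p{X_i-X_i'}_i$ has the same joint distribution as $\p{\varepsilon_i\p{X_i-X_i'}}_i$, one obtains the two-sided bound $\E\n{\sum X_i}_{L^p(S)}\simeq\E\n{\sum\varepsilon_i X_i}_{L^p(S)}$ with absolute constants, the triangle inequality being used to split $\sum_i\varepsilon_i\p{X_i-X_i'}$ into the two Rademacher sums built from the $X_i$ and from the $X_i'$. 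Thus it suffices to prove $\E\n{\sum\varepsilon_i X_i}_{L^p(S)}\simeq_p\E\n{\sqrt{\sum\left|X_i\right|^2}}_{L^p(S)}$.

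Next I would freeze the random functions $X_i\in L^p(S)$ and take the expectation $\E_\varepsilon$ over the signs only. By Fubini's theorem and the scalar Khintchine inequality applied for each fixed $s\in S$,
\beq \E_\varepsilon\n{\sum\varepsilon_i X_i}_{L^p(S)}^p=\int_S\E_\varepsilon\left|\sum\varepsilon_iX_i(s)\right|^p\d s\simeq_p\int_S\p{\sum\left|X_i(s)\right|^2}^\frac p2\d s=\n{\sqrt{\sum\left|X_i\right|^2}}_{L^p(S)}^p,\eeq
so that $\p{\E_\varepsilon\n{\sum\varepsilon_iX_i}_{L^p(S)}^p}^\frac1p\simeq_p\n{\sqrt{\sum\left|X_i\right|^2}}_{L^p(S)}$ for every realization of $\p{X_i}_i$. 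Since $L^p(S)$ is a Banach space for $p\geq 1$, the Kahane inequality applied with $B=L^p(S)$, comparing the first and the $p$-th moment of the $B$-valued Rademacher sum, gives $\E_\varepsilon\n{\sum\varepsilon_iX_i}_{L^p(S)}\simeq_p\p{\E_\varepsilon\n{\sum\varepsilon_iX_i}_{L^p(S)}^p}^\frac1p$. Taking $\E_X$ of the resulting chain of equivalences and combining with the first step finishes the argument.

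There is no genuine obstacle here: the proof is a routine transcription of the classical scalar one, and the Kahane step (the only one that is not automatic, since it must turn a first moment into a $p$-th moment of a vector-valued Rademacher sum) is itself standard. The matters requiring a little care are purely bookkeeping ones: that the square function $\sqrt{\sum\left|X_i\right|^2}$ is a bona fide element of $L^p(S)$, which follows from comparing the $\ell^2$ and $\ell^p$ norms on $\R^n$ (giving $\int_S\p{\sum\left|X_i(s)\right|^2}^\frac p2\d s\lesssim_{n,p}\sum_i\n{X_i}_{L^p(S)}^p<\infty$), and that the constants produced by Khintchine and by Kahane depend only on $p$, and in particular not on $S$, on $n$, or on the laws of the $X_i$, which is exactly what $\simeq_p$ asserts. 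When $p=1$ the Kahane step is of course vacuous.
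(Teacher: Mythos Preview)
Your proof is correct and follows essentially the same route as the paper: symmetrize to a Rademacher sum, use Kahane in $L^p(S)$ to pass between the first and $p$-th moments of that sum, and apply scalar Khintchine fibrewise over $S$. The only cosmetic difference is that the paper phrases the symmetrization via the observation $\E\|X+Y\|\simeq\E\|X-Y\|$ for independent mean-zero $X,Y$ rather than through an explicit independent copy, but the content is identical.
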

\textit{Proof.} If $X,Y$ are mean $0$ and independent, then $\E\|X+Y\|_{L^p}\geq \E\|X\|_{L^p}$ by taking the conditional expectation with respect to (the sigma-algebra generated by) $X$ and $\E\|X+Y\|_{L^p}\leq \E\|X\|_{L^p}+\E\|Y\|_{L^p}$ because of triangle inequality. Thus $\E\|X+Y\|_{L^p}\simeq \E\|X\|_{L^p}+\E\|Y\|_{L^p}$ and in particular $\E\|X+Y\|_{L^p}\simeq \E\|X-Y\|_{L^p}$. Hence, there is an equivalence $\E\n{\sum X_i}_{L^p}\simeq \E\n{\sum r_i X_i}_{L^p}$, where $r_i$ are Rademacher variables independent of $X_i$'s, which is true a.e. on the space where $r_i$'s are defined. Let us distinguish the expectations on spaces on which $r_i$'s and $X_i$'s are defined by denoting them respectively by $\E^{(r)}$ and $\E^{(X)}$. Then
\begin{eqnarray*} \E^{(X)}\n{\sum X_i}_{L^p}&\simeq & \E^{(r)}\E^{(X)}\n{\sum r_i X_i}_{L^p}\\
&\simeq_p & \E^{(X)}\left(\E^{(r)}\n{\sum r_i X_i}_{L^p}^p\right)^\frac1p\\ 
&=& \E^{(X)}\left(\int_S \E^{(r)}\left|\sum r_i X_i(s)\right|^p \d \mu(s)\right)^\frac1p\\
&\simeq_p& \E^{(X)}\left(\int_S \left(\sum \left|X_i(s)\right|^2\right)^\frac{p}{2} \d \mu(s)\right)^\frac1p\\
&=& \E^{(X)} \left\|\sqrt{\sum \left|X_i\right|^2}\right\|_{L^p}.
\end{eqnarray*} \par

\textbf{Vector-valued inequalities.} For a Banach space $B$, by $L^p\p{S,B}$ we denote the Bochner space of strongly measurable $B$-valued random variables equipped with the norm 
\beq \|f\|_{L^p(S,B)}=\p{\int_{s}\|f(x)\|_B^p\d\mu(s)}^\frac1p\eeq
(or, equivalently, the closed span of functions of the form $(f\otimes v)(x)= f(x)v$, where $f\in L^p(S)$ and $v\in B$, in the $L^p(S,B)$ norm). For an operator $T$ between subspaces of $L^p\p{S_1}$ and $L^p\p{S_2}$ and a linear operator $F:B_1\to B_2$ we can define $T\otimes F$ on the algebraic tensor product by $\p{T\otimes F}\p{f\otimes v}= T(f)\otimes F(v)$, but this construction does not necessarlily produce a bounded operator on the closure. The main tool for obtaining vector-vlaued extensions of inequalities will be the following lemma, which for $I_1,I_2$ being singletons is due to Marcinkiewicz and Zygmund \cite{marzyg} (in this case $\lesssim \|T\|$ can be replaced with $\leq\|T\|$). 
\begin{lem}\label{rbdd}Let $X_i\subset L^1\p{S_i,\ell^2\p{I_i}}$ for $i=1,2$, $B$ be a Hilbert space and $T:X_1\to X_2$ be bounded. Then $T\otimes \id_B: X_1\otimes B\to X_2\otimes B$, where $X_i\otimes B$ is treated as a subspace of $L^1\p{\Omega_i,\ell^2\p{I_i,B}}$, is bounded with norm $\lesssim \|T\|$. \end{lem}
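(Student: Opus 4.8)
The plan is to reduce the statement to the scalar Khintchine--Kahane inequality stated above, via a Rademacher symmetrization, exactly as in the classical Marcinkiewicz--Zygmund argument that this lemma generalizes. First I would fix an orthonormal basis $\p{e_j}_{j\in J}$ of $B$, so that $\ell^2\p{I_i,B}\cong\ell^2\p{I_i\times J}$ isometrically. Since the algebraic tensor product $X_1\otimes B$ consists of finite sums, every $f\in X_1\otimes B$ can be written (after replacing the finitely many vectors of $B$ that occur by an orthonormal set spanning their linear span) as $f=\sum_{j=1}^N f_j\otimes e_j$ with $f_j\in X_1$; then $\p{T\otimes\id_B}f=\sum_{j=1}^N\p{Tf_j}\otimes e_j$. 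It therefore suffices to establish the norm bound $\n{\p{T\otimes\id_B}f}\lesssim\|T\|\,\n{f}$ for such $f$ and then extend $T\otimes\id_B$ by density to $\ol{X_1\otimes B}$.

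The core computation runs as follows. For a.e.\ $x\in S_2$, orthogonality of the Rademacher variables $\varepsilon_1,\ldots,\varepsilon_N$ gives the exact identity
\[\n{\p{T\otimes\id_B}f(x)}_{\ell^2\p{I_2,B}}^2=\sum_{j=1}^N\n{\p{Tf_j}(x)}_{\ell^2\p{I_2}}^2=\E_\varepsilon\n{\sum_{j=1}^N\varepsilon_j\p{Tf_j}(x)}_{\ell^2\p{I_2}}^2.\]
Applying Khintchine--Kahane with the coefficient space $B=\ell^2\p{I_2}$ (the nontrivial direction, passing from the $L^2$ to the $L^1$ average), the right-hand side is $\lesssim\p{\E_\varepsilon\n{\sum_{j=1}^N\varepsilon_j\p{Tf_j}(x)}_{\ell^2\p{I_2}}}^2$. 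Integrating in $x$, interchanging $\int_{S_2}$ with $\E_\varepsilon$ by Tonelli, and using $\sum_j\varepsilon_j Tf_j=T\p{\sum_j\varepsilon_j f_j}$ for each fixed sign pattern, we get
\[\n{\p{T\otimes\id_B}f}_{L^1\p{S_2,\ell^2\p{I_2,B}}}\lesssim\|T\|\,\E_\varepsilon\n{\sum_{j=1}^N\varepsilon_j f_j}_{L^1\p{S_1,\ell^2\p{I_1}}}=\|T\|\int_{S_1}\E_\varepsilon\n{\sum_{j=1}^N\varepsilon_j f_j(x)}_{\ell^2\p{I_1}}\d x.\]
Finally, Jensen's inequality (this time no Khintchine is needed) bounds the inner average by $\p{\E_\varepsilon\n{\sum_j\varepsilon_j f_j(x)}_{\ell^2\p{I_1}}^2}^{\frac12}=\n{f(x)}_{\ell^2\p{I_1,B}}$, so the whole expression is $\lesssim\|T\|\,\n{f}_{L^1\p{S_1,\ell^2\p{I_1,B}}}$; the implied constant is the absolute constant from Khintchine--Kahane at the exponents $1$ and $2$.

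I do not expect a genuine obstacle here: the lemma is essentially bookkeeping around the Marcinkiewicz--Zygmund trick, and the hypotheses (outer $L^1$, inner $\ell^2$) are precisely what makes the Rademacher average reduce to a clean $L^2$ identity. The points that need a little care are: that an arbitrary, possibly non-separable, Hilbert space $B$ causes no trouble, which is handled by working on the algebraic tensor product where only finitely many basis vectors appear at a time; keeping track of which Hilbert space plays the role of the coefficient space at each step (it is $\ell^2\p{I_2}$ in the single place where the nontrivial Khintchine--Kahane direction is invoked, while the reverse pass over $S_1$ is pure Jensen); and the routine Tonelli interchanges, legitimate since all integrands are nonnegative. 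The density extension at the end is the standard one, as $T\otimes\id_B$ is by then a bounded operator defined on a dense subspace, and continuity keeps its range inside $\ol{X_2\otimes B}$.
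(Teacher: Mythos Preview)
Your proof is correct and follows essentially the same route as the paper: reduce to finitely many orthonormal directions in $B$, use the pointwise identity $\sum_j\|Tf_j(x)\|_{\ell^2(I_2)}^2=\E_\varepsilon\|\sum_j\varepsilon_j Tf_j(x)\|_{\ell^2(I_2)}^2$, apply the nontrivial $L^2\to L^1$ direction of Khintchine--Kahane, pass $T$ inside by linearity, and close with Jensen on the $S_1$ side. The paper's argument is organized slightly differently (it starts from $\sqrt{\sum_j\|Tf_j\|^2}$ and uses Khintchine there), but the content is the same and your additional remarks on density and non-separability are fine.
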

\begin{proof}Without loss of generality, $B$ is finite-dimensional, say $B=\ell^2\p{J}$ for some finite $J$. Let $X_1\otimes \ell^2\p{J}\ni f=\p{f_j}_{j\in J}$, so that $f_j\in X_1$. Let also $r_j$ for $j\in J$ be Rademacher variables. Then, applying $\ell^2\p{I_2}$-valued Khintchine inequality, 
\begin{align}\left\|\p{T\otimes \id}f\right\|_{L^1\p{S_2,\ell^2\p{I_2\times J}}} =&\int_{S_2}\sqrt{\sum_{j\in J}\left\|Tf_j\p{s}\right\|_{\ell^2\p{I_2}}^2}\d \mu_2(s)\\
\simeq & \int_{S_2}\E\left\|\sum_j r_j Tf_j(s)\right\|_{\ell^2\p{I_2}}\d\mu_2(s)\\
=& \E\int_{S_2}\left\|T\p{\sum_j r_j f_j}(s)\right\|_{\ell^2\p{I_2}}\d\mu_2(s)\\
\leq& \|T\| \E\int_{S_1}\left\|\sum_j r_j f_j(s)\right\|_{\ell^2\p{I_1}}\d\mu_1(s)\\\leq& \|T\|\int_{S_1}\sqrt{\sum_j \left\|f_j(s)\right\|_{\ell^2\p{I_1}}^2}\d\mu_1(s)\\
=&\|T\| \|f\|_{L^1\p{S_1,\ell^2\p{I_1\times J}}}.
\end{align}
\end{proof}

\textbf{Interpolation.} Let us recall basic information about the real interpolation method. The standard reference is \cite{bensha}. A couple $\p{X_0,X_1}$ of Banach spaces is called compatible if $X_0$ and $X_1$ are embedded in a linear topological space. For a compatible couple $\p{X_0,X_1}$ we define the $K$-functional by the formula \[K\p{f,t;X_0,X_1}= \inf_{f=f_0+f_1} \n{f_0}_{X_0}+t\n{f_1}_{X_1}\] for $f\in X_0+X_1$. For $Y_0\subset X_0$, $Y_1\subset X_1$, we will say that the couple $\p{Y_0,Y_1}$ is $K$-closed in $\p{X_0,X_1}$ if \[ K\p{f,t;Y_0,Y_1}\lesssim K\p{f,t;X_0,X_1}\] for any $f$ in the algebraic sum $Y_0+Y_1$ (the reverse inequality holds trivially). This is equivalent to the following property: for any $f\in Y_0+Y_1$ and any decomposition $f=f_0+f_1$, where $f_0\in X_0$, $f_1\in X_1$, there exists a decomposition $f=\tilde{f_0}+\tilde{f_1}$ such that $\tilde{f_0}\in Y_0$, $\tilde{f_1}\in Y_1$, $\|\tilde{f_0}\|_{Y_0}\lesssim \n{f_0}_{X_0}$, $\|\tilde{f_1}\|_{Y_1}\lesssim \n{f_1}_{X_1}$. \par
The $K$-functional plays a crucial role in the real interpolation method. The norm in the real interpolation space $\p{X_0,X_1}_{\theta,q}$, where $0<\theta<1$ and $1\leq q<\infty$ is defined by 
\beq \|f\|_{\p{X_0,X_1}_{\theta,q}}= \p{\int_0^\infty \p{t^{-\theta}K\p{f,t;X_0,X_1}}^q\frac{\d t}{t}}^\frac1q.\eeq
Operators bounded simultaneously on $X_0$ and $X_1$ are also bounded on $\p{X_0,X_1}_{\theta,q}$. The canonical example is $\p{L^{p_0},L^{p_1}}_{\theta,p_{\theta}}= L^{p_\theta}$, where $\frac{1}{p_\theta}= \frac{1-\theta}{p_0}+\frac{\theta}{p_1}$ and, more generally, $\p{L^{p_0}\p{\ell^{q_0}},L^{p_1}\p{\ell^{q_1}}}_{\theta,p_\theta}= L^{p_\theta}\p{\ell^{q_\theta,p_\theta}}$. If $\p{Y_0,Y_1}$ is $K$-closed in $\p{X_1,X_2}$, then it is easily seen that $\p{Y_0,Y_1}_{\theta,q}= \p{Y_0+Y_1}\cap \p{X_0,X_1}_{\theta,q} $, which is particularly useful in case of couples $K$-closed in Lebesgue spaces. \par
The algebraic sum $X_0+X_1$ becomes a Banach space when equipped with the norm
\beq \|f\|_{X_0+X_1}=K\p{f,1;X_0,X_1}.\eeq
The intersection $X_0\cap X_1$ will be equipped with the norm
\beq \|f\|_{X_0\cap X_1}=\max\p{\|f\|_{X_0},\|f\|_{X_1}}.\eeq
It is easily checked that the dual space $\p{X_0+X_1}^*$ can be isometrically identified with $X_0^*\cap X_1^*$.

\textbf{Hoeffding decomposition.} In order to avoid technicalities with convergence in strong operator topology, we will work in a finite product of $\Omega$ (all the results extend automatically to $\Omega^\infty$ by density). We will see in a moment that any function $f\in L^1\left(\Omega^n\right)$ can be decomposed in a unique way as
\[ \label{eq:hoeff} f=\sum_{m=0}^n \sum_{1\leq i_1<\ldots<i_m\leq n}P_{i_1,\ldots,i_m}f,\]
where $P_{i_1,\ldots,i_m}f\p{x_1,\ldots,x_n}$ depends only on $x_{i_1},\ldots,x_{i_m}$ and is of mean $0$ with respect to each of $x_{i_1},\ldots,x_{i_m}$ (equivalently, $P_A f$ is $\mathcal{F}_A$-measurable and is orthogonal to all $\mathcal{F}_B$-measurable functions for $B\subsetneq A$). This decomposition has been studied in \cite{bourgwalsh}, \cite{kwap}. In particular, $P_{i_1,\ldots,i_m}$ are pairwise orthogonal orthogonal projections. Let \[P_m= \sum_{1\leq i_1<\ldots<i_m\leq n}P_{i_1,\ldots,i_m}\] and $\V_m$ be the range of $P_m$. It is known \cite{bourgwalsh}, \cite{kwap} that $P_m$ is bounded on $L^p\p{\Omega^n}$, $1<p<\infty$, with norm independent on $n$, but this is not true for $L^1\p{\Omega^n}$. \par
One of the possible ways to prove the existence of the above decomposition in $L^2\left(\Omega^n\right)$ is as follows. First we define the subspace
\beq \V_{\leq m}^2=\clspan \bigcup_{|A|\leq m} \left\{f\in L^2\left(\Omega^n\right): f\text{ is }\mathcal{F}_A\text{-measurable}\right\}\subset L^2\left(\Omega^n\right)\eeq 
for each $m\geq 0$. The sequence of subspaces $\V_{\leq 0}^2,\V_{\leq 1}^2,\ldots,\V_{\leq n}^2$ is increasing, so by putting \beq \V_0^2=\V_{\leq 0}^2, \quad \V_m^2=\V_{\leq m}^2\cap \p{\V_{\leq m-1}^2}^{\perp}\eeq we obtain a decomposition \beq L^2\left(\Omega^n\right)= \bigoplus_{m=0}^{n} \V_m^2\eeq into an orthogonal direct sum of $\V_m^2$. We will denote the orthogonal projection onto $\V_m^2$ by $P_m$ and the closure of $U^2_m$ equipped with $L^p$ norm by $U^p_m$. \par
A more explicit formula for $P_m$ can be obtained. For $A\subset [1,n]$, let 
\beq\label{eq:pdef} P_A= \left(\id-\E\right)^{\otimes A}\otimes \E^{\otimes [1,n]\setminus A},\eeq where $\id$ and $\E$ are understood to act on $L^2(\Omega)$, and let $\V_A^2$ be the range of the projection $P_A$. It is easy to see that 
\beq \label{eq:iddecomp}\E_A= (\id- \E+\E)^{\otimes A}\otimes \E^{\otimes [1,n]\setminus A}=  \sum_{B\subset A}\left(\id-\E\right)^{\otimes B}\otimes\E^{\otimes [1,n]\setminus B}\eeq 
and, since the subspaces $\V_B^2$ are mutually orthogonal, 
\beq  \label{eq:idoplus}L^2\left(\Omega^n,\mathcal{F}_A\right)= \bigoplus_{B\subset A} \V_B^2.\eeq
Moreover 
\begin{eqnarray} \V_{\leq m}^2&=& \clspan \bigcup_{|A|\leq m}L^2\left(\Omega^n,\mathcal{F}_A\right)\\ &=&  \clspan \bigcup_{|A|\leq m}  \bigoplus_{B\subset A} \V_B^2\\ &=&  \bigoplus_{|B|\leq m} \V_B^2\end{eqnarray} and consequently \beq\label{eq:pdecomp} \V_m^2= \bigoplus_{|B|=m} \V_B^2, \quad P_m=\sum_{|B|=m}P_B.\eeq 

\textbf{Decoupling inequalities.} We are going to present a special case of a theorem of J. Zinn \cite{zinn}, which will be one of the most important tools. \par
\begin{thm}[Zinn]\label{tildemain}Let $X_1,\ldots,X_n,Y_1,\ldots,Y_n$ be independent random variables such that $X_k$ and $Y_k$ have the same distribution for any $k$. Let also $\varphi_k$ be a nonnegative Borel function on $\R^k$ and $0<q\leq 1$. Then
\beq \E\p{\sum_k \varphi_k\p{X_1,\ldots,X_k}}^q\simeq \E\p{\sum_k \varphi_k\p{X_1,\ldots,X_{k-1},Y_k}}^q.\eeq
\end{thm}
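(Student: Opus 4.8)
The plan is to replace the $L^q$-quasinorm of a sum by a bounded, $q$-free functional and then exploit a martingale telescoping. Since the case $q=1$ is trivial — both sides equal $\sum_k\E\varphi_k(X_1,\dots,X_k)$ by linearity of expectation and the assumption $X_k\sim Y_k$ — I would assume $0<q<1$ and use the subordination identity $t^q=c_q\int_0^\infty\p{1-e^{-\lambda t}}\lambda^{-1-q}\,\d\lambda$ with $c_q=q/\Gamma(1-q)$. Inserting this inside the expectation and applying Tonelli, the asserted equivalence reduces to proving
\[\E\Bigl[1-e^{-\sum_k\varphi_k(X_1,\dots,X_k)}\Bigr]\simeq\E\Bigl[1-e^{-\sum_k\varphi_k(X_1,\dots,X_{k-1},Y_k)}\Bigr]\]
\emph{with an absolute constant}, for \emph{every} choice of nonnegative $\varphi_k$: applying this with $\varphi_k$ replaced by $\lambda\varphi_k$ then covers all scales $\lambda>0$ at once, and the $q$-dependent factor $c_q$ cancels between the two sides. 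This is also what will make the final constant independent of $p$, as the paper needs.

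Next I would do the conditioning. Set $\psi_k=e^{-\varphi_k}$, viewed as a function $\R^k\to(0,1]$, and let $h_k=\E\bigl[\psi_k(X_1,\dots,X_k)\mid\mathcal F_{k-1}\bigr]$, which by independence of the coordinates equals $\int\psi_k(X_1,\dots,X_{k-1},y)\,\d(\text{law of }Y_k)(y)$. Conditioning on $(X_1,\dots,X_n)$ and using that $Y_1,\dots,Y_n$ are independent of one another and of the $X_i$'s, one gets $\E\bigl[e^{-\sum_k\varphi_k(X_{<k},Y_k)}\bigr]=\E\bigl[\prod_k h_k\bigr]$, while trivially $\E\bigl[e^{-\sum_k\varphi_k(X_{\le k})}\bigr]=\E\bigl[\prod_k\psi_k\bigr]$. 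So the whole theorem comes down to the inequality
\[1-\E\prod_{k=1}^n\psi_k\ \simeq\ 1-\E\prod_{k=1}^n h_k\]
with an absolute constant, for an arbitrary adapted sequence $(\psi_k)$ with values in $(0,1]$ and $h_k=\E[\psi_k\mid\mathcal F_{k-1}]$.

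The hard part is obtaining this last equivalence with a constant \emph{independent of $n$}: the tempting argument that re-couples one coordinate at a time loses a factor (about $2$) at each of the $n$ steps, hence gives only the useless bound $2^n$, so a genuinely global argument is needed. I would argue as follows. Telescoping, $\prod_k\psi_k-\prod_k h_k=\sum_{j=1}^n\bigl(\prod_{k<j}h_k\bigr)(\psi_j-h_j)\bigl(\prod_{k>j}\psi_k\bigr)$; here $\prod_{k<j}h_k$ is $\mathcal F_{j-1}$-measurable and $\E[\psi_j-h_j\mid\mathcal F_{j-1}]=0$, so conditioning the $j$-th summand on $\mathcal F_{j-1}$ (and using the tower property on the future factor) rewrites its contribution as $\E\bigl[\bigl(\prod_{k<j}h_k\bigr)\,\mathrm{Cov}(\psi_j,\Psi_j\mid\mathcal F_{j-1})\bigr]$ with $\Psi_j=\E[\prod_{k>j}\psi_k\mid\mathcal F_j]\in[0,1]$. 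The point is the elementary bound $|\mathrm{Cov}(U,V\mid\mathcal G)|\le\E[1-U\mid\mathcal G]$ for $[0,1]$-valued $U,V$, which holds because $\mathrm{Cov}(U,V\mid\mathcal G)=\E[1-U\mid\mathcal G]\,\E[V\mid\mathcal G]-\E[(1-U)V\mid\mathcal G]$ is a difference of two quantities lying in $[0,\E[1-U\mid\mathcal G]]$. This yields $\bigl|\E\prod_k\psi_k-\E\prod_k h_k\bigr|\le\sum_j\E\bigl[\bigl(\prod_{k<j}h_k\bigr)(1-h_j)\bigr]=1-\E\prod_k h_k$, the last equality being again a telescoping, hence $1-\E\prod_k\psi_k\le2\bigl(1-\E\prod_k h_k\bigr)$; running the same computation on the telescoping with the roles of $\psi$ and $h$ interchanged gives the reverse inequality, both with constant $2$. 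Unwinding the two reductions then produces $\E\bigl(\sum_k\varphi_k(X_{\le k})\bigr)^q\simeq\E\bigl(\sum_k\varphi_k(X_{<k},Y_k)\bigr)^q$ with an absolute constant.
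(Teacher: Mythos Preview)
The paper does not prove this statement; it is quoted as a special case of a result of Zinn and used as a black box, so there is no ``paper's proof'' to compare against.

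Your argument is correct and self-contained, and it delivers the equivalence with the absolute constant $2$. The subordination $t^q=c_q\int_0^\infty(1-e^{-\lambda t})\lambda^{-1-q}\,\d\lambda$ legitimately reduces matters to the scale-free comparison $1-\E\prod_k\psi_k\simeq 1-\E\prod_k h_k$, and your telescoping together with the covariance bound $|\mathrm{Cov}(U,V\mid\mathcal G)|\le \E[1-U\mid\mathcal G]$ gives one direction cleanly. The only spot a reader might hesitate is the phrase ``with the roles of $\psi$ and $h$ interchanged'' for the reverse inequality, because the two sequences do not play symmetric roles in the filtration. What actually works is swapping them in the telescoping formula itself, giving
\[
\prod_k h_k-\prod_k\psi_k=\sum_j\Bigl(\prod_{k<j}\psi_k\Bigr)(h_j-\psi_j)\Bigl(\prod_{k>j}h_k\Bigr).
\]
Now the past factor $\prod_{k<j}\psi_k$ is $\mathcal F_{j-1}$-measurable; conditioning the future factor on $\mathcal F_j$ produces $H_j=\E[\prod_{k>j}h_k\mid\mathcal F_j]\in[0,1]$, and the same covariance bound gives $|\mathrm{Cov}(\psi_j,H_j\mid\mathcal F_{j-1})|\le 1-h_j$. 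Since $\E\bigl[(\prod_{k<j}\psi_k)(1-h_j)\bigr]=\E\bigl[(\prod_{k<j}\psi_k)(1-\psi_j)\bigr]$ and $\sum_j(\prod_{k<j}\psi_k)(1-\psi_j)=1-\prod_k\psi_k$, one obtains $\bigl|\E\prod_k h_k-\E\prod_k\psi_k\bigr|\le 1-\E\prod_k\psi_k$, hence $1-\E\prod_k h_k\le 2\bigl(1-\E\prod_k\psi_k\bigr)$. It would help the reader if you spelled this out, since the second telescoping is not literally identical to the first (the measurability of the factors shifts), but the mechanism is exactly the one you describe.
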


\begin{cor}\label{multizinn}For all $i=\p{i_1,\ldots,i_m}$ such that $i_1<\ldots<i_m$, let $f_i$ be an $\mathcal{F}_{\left\{i_1,\ldots,i_m\right\}}$-measurable nonnegative function on $\Omega^\N$. Then, treating each $f_i$ as a function on $ \Omega^{\left\{i_1,\ldots,i_m\right\}}$,
\beq
\label{eq:multidec}\int_{\Omega^\N}\p{\sum_i f_i\p{x_{i_1},\ldots,x_{i_m}}}^q\d x\simeq_m  \int_{\p{\Omega^\N}^m}\p{\sum_i f_i\p{y^{(1)}_{i_1},\ldots,y^{(m)}_{i_m}}}^q\d y^{(1,\ldots,m)},
\eeq
where $y^{(1)},\ldots,y^{(m)}$ are variables in $\Omega^\N$ and $0<q\leq 1$. 
\end{cor}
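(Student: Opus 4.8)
The plan is to prove \eqref{eq:multidec} by induction on $m$, at each step peeling off one decoupling via Theorem \ref{tildemain} applied to the \emph{largest} index slot. For $m\le 1$ the two sides of \eqref{eq:multidec} coincide after relabelling, so there is nothing to prove. Since all the functions $f_i$ are nonnegative, the monotone convergence theorem lets me assume that only finitely many $f_i$ are nonzero and that each of them is bounded (replace $f_i$ by $f_i\wedge N$ for $i$ ranging over a finite set of multi-indices, let $N\to\infty$, and let the finite set exhaust all multi-indices; both sides of \eqref{eq:multidec} increase to the right values, and the constant we produce will be uniform along the way). In particular all indices then lie in some $[1,n]$, and we may work on a probability space carrying $m+1$ mutually independent copies of $\Omega^{n}$, called $x,y^{(1)},\dots,y^{(m)}$, which is exactly the finite setting of Theorem \ref{tildemain}.

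Grouping the sum according to the value $k$ of the largest index $i_m$, write $\sum_i f_i(x_{i_1},\dots,x_{i_m})=\sum_k\varphi_k(x_1,\dots,x_k)$ where
\[\varphi_k(x_1,\dots,x_k)=\sum_{j_1<\cdots<j_{m-1}<k} f_{(j_1,\dots,j_{m-1},k)}(x_{j_1},\dots,x_{j_{m-1}},x_k).\]
The point of decoupling the top slot first is that every argument appearing here has index $\le k$, so $\varphi_k$ is a bounded nonnegative Borel function of $x_1,\dots,x_k$ and Theorem \ref{tildemain} is directly applicable; it gives, with the constant from that theorem,
\[\int\Bigl(\sum_i f_i(x_{i_1},\dots,x_{i_m})\Bigr)^q\d x\simeq\int\!\!\int\Bigl(\sum_{i_1<\cdots<i_m} f_i(x_{i_1},\dots,x_{i_{m-1}},y^{(m)}_{i_m})\Bigr)^q\d x\,\d y^{(m)}.\]

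Next I would freeze $y^{(m)}$ and recognise the inner integral as an \emph{undecoupled} $(m-1)$-variable $U$-statistic in the variables $x$: for each fixed $y^{(m)}$ and each $(m-1)$-tuple $i_1<\cdots<i_{m-1}$ set $g^{y^{(m)}}_{(i_1,\dots,i_{m-1})}(x_{i_1},\dots,x_{i_{m-1}})=\sum_{i_m>i_{m-1}} f_{(i_1,\dots,i_{m-1},i_m)}(x_{i_1},\dots,x_{i_{m-1}},y^{(m)}_{i_m})$, a nonnegative $\mathcal{F}_{\{i_1,\dots,i_{m-1}\}}$-measurable function, so that $\sum_{i_1<\cdots<i_m} f_i(x_{i_1},\dots,x_{i_{m-1}},y^{(m)}_{i_m})=\sum_{i_1<\cdots<i_{m-1}} g^{y^{(m)}}_{(i_1,\dots,i_{m-1})}(x_{i_1},\dots,x_{i_{m-1}})$. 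The inductive hypothesis applied to the coefficients $g^{y^{(m)}}$, using the copies $y^{(1)},\dots,y^{(m-1)}$ (which are independent of $y^{(m)}$), yields for every fixed value of $y^{(m)}$ an equivalence with a constant depending only on $m-1$; since this constant is uniform in $y^{(m)}$, integrating the two sides over $y^{(m)}$ (Tonelli, all integrands nonnegative) and unfolding the definition of $g^{y^{(m)}}$ reproduces exactly the decoupled right-hand side of \eqref{eq:multidec}. Chaining this with the equivalence of the previous paragraph multiplies the constant by the factor from Theorem \ref{tildemain} at each of the $m$ levels, so the final constant depends only on $m$, as claimed. I expect the only point requiring genuine care to be the justification of applying Theorem \ref{tildemain} and the inductive hypothesis, both of which are phrased for honest (finite-valued, finitely many variables) functions; this is precisely what the truncation in the first paragraph is for, after which $\varphi_k$ and the $g^{y^{(m)}}$ are bounded Borel functions of finitely many variables and both results apply verbatim.
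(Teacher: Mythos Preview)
Your proof is correct and is essentially the same argument as the paper's: both decouple one slot at a time via Theorem \ref{tildemain}, freezing the already-decoupled variables as parameters and integrating them back at the end. The only cosmetic difference is that you package the $m$ applications of Theorem \ref{tildemain} as an induction on $m$ (peel off slot $m$, then invoke the $(m-1)$-variable statement on the frozen-$y^{(m)}$ coefficients $g^{y^{(m)}}$), whereas the paper writes out the chain directly, passing for $k=m,m-1,\dots,1$ from ``slots $k{+}1,\dots,m$ decoupled'' to ``slots $k,\dots,m$ decoupled''; the intermediate $\varphi_j$ in the paper plays exactly the role of your $\varphi_k$ and $g^{y^{(m)}}$ combined.
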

\begin{proof} Let us fix $k\in\{1,\ldots,m\}$ and for each $j\in \N$ define a function $\varphi_j$ on $\Omega^{\left[1,j\right]}\times \p{\Omega^\N}^{m-k}$ by the formula
\begin{align} &\varphi_j \p{x_{\leq j},y^{(k+1)},\ldots,y^{(m)}}=\\
\nonumber&\sum_{\substack{i_1<\ldots<i_{k-1}<\\ j<i_{k+1}<\ldots<i_{m}}} f_{i_1,\ldots,i_{k-1},j,i_{k+1},\ldots,i_{m}}\p{x_{i_1},\ldots,x_{i_{k-1}},x_j,y^{(k+1)}_{i_{k+1}},\ldots,y^{(m)}_{i_m}}.\end{align}
Then, for fixed $y^{(> k)}=\p{ y^{(k+1)},\ldots,y^{(m)}}\in\p{\Omega^\N}^{m-k}$,  
\begin{align}&\int_{\Omega^\N}\p{\sum_{i_1<\ldots<i_m} f_i\p{x_{i_1},\ldots,x_{i_{k}},y^{(k+1)}_{i_{k+1}},\ldots,y^{(m)}_{i_m}}  }^q\d x \\
=& \int_{\Omega^\N}\p{\sum_{j\in \N} \varphi_j \p{x_{\leq j},y^{(k+1)},\ldots,y^{(m)}} }^q\d x \\
\label{eq:appdec2phi}\simeq& \int_{\Omega^\N}\int_{\Omega^\N}\p{\sum_{j\in \N} \varphi_j \p{x_{<j},y^{(k)}_{j},y^{(k+1)},\ldots,y^{(m)}}  }^q\d x\d y^{(k)}\\
=& \int_{\Omega^\N}\int_{\Omega^\N}\p{\sum_{i_1<\ldots<i_m} f_i\p{x_{i_1},\ldots,x_{i_{k-1}},y^{(k)}_{i_k},y^{(k+1)}_{i_{k+1}},\ldots,y^{(m)}_{i_m}}  }^q\d x\d y^{(k)}.
\end{align}
Here, $i_k$ plays the role of $j$ and \eqref{eq:appdec2phi} is an application of Theorem \ref{tildemain} to functions $\varphi_j$. Integrating the resulting inequality with respect to $y^{(>k)}$, we get 
\begin{align} &\int_{\p{\Omega^\N}^{m-k}}\int_{\Omega^\N}\p{\sum_{i_1<\ldots<i_m} f_i\p{x_{i_1},\ldots,x_{i_{k}},y^{(k+1)}_{i_{k+1}},\ldots,y^{(m)}_{i_m}}  }^q\d x\d y^{(\geq k+1)}\\
\nonumber \simeq & \int_{\p{\Omega^\N}^{m-k+1}}\int_{\Omega^\N}\p{\sum_{i_1<\ldots<i_m} f_i\p{x_{i_1},\ldots,x_{i_{k-1}},y^{(k)}_{i_k},y^{(k+1)}_{i_{k+1}},\ldots,y^{(m)}_{i_m}}  }^q\d x\d y^{(\geq k)},\end{align}
which by induction from $k=m$ to $k=1$ proves \eqref{eq:multidec}.  \end{proof}

\section{Weighted inequalities}\label{secweighted}

If $\left(\Omega_i,\mu_i,\mathcal{F}_i\right)$ are probability spaces, we can form a (no longer probability) measure space $\left(\coprod_i\Omega_i, \coprod_i \mu_i,\coprod_i \mathcal{F}_i\right)$ by considering the disjoint union of sets $\Omega_i$ with a measure $\coprod_i \mu_i\left(\bigcup A_i\right)=\sum_i \mu_i\left(A_i\right)$ for $A_i\in\mathcal{F}_i$. We will often write $\d x$ instead of $\d \mu(x)$ if the choice of measure is clear. \par
Our main motivation is the following special case of a theorem due to Johnson and Schechtman. 
\begin{thm}[Johnson, Schechtman \cite{JSch}]\label{JSch} Let $f_i\in L^1(\Omega)$ for $i=1,\ldots,n$ and $1\leq p<\infty$. Then \beq \label{eq:JSchineq}\int_{\Omega^n}\left(\sum_i \left|f_i\left(x_i\right)\right|^p\right)^\frac1p \d x\simeq_p \inf_{\coprod_i f_i=g+h}\left\|g\right\|_{L^1\left(\coprod_i\Omega\right)} + \left\|h\right\|_{L^p\left(\coprod_i\Omega\right)}.\eeq In more explicit terms, the inequality `$\gtrsim$' means that if \[ \int_{\Omega^n}\left(\sum_i \left|f_i\left(x_i\right)\right|^p\right)^\frac1p \d x\leq 1,\] then there exists a decomposition $f_i=g_i+h_i$ such that \[\sum_i \int_{\Omega} \left|g_i\left(x_i\right)\right|\d x_i\lesssim 1,\quad \sum_i \int_{\Omega} \left|h_i\left(x_i\right)\right|^2\d x_i\lesssim 1.\] Moreover, the decomposition can be chosen so that for any $i$ the supports of $g_i$ and $h_i$ are disjoint.\end{thm}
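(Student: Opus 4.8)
\emph{Proof plan.} First I would dispose of the estimate ``$\lesssim$'', which is elementary and holds with constant $1$. Given any splitting $\coprod_i f_i=g+h$, let $g_i,h_i$ be the restrictions of $g,h$ to the $i$-th copy of $\Omega$. The triangle inequality in $\ell^p$ together with the embedding $\ell^p\hookrightarrow\ell^1$ (valid since $p\geq1$) gives, pointwise on $\Omega^n$, $\p{\sum_i|f_i(x_i)|^p}^{1/p}\leq\sum_i|g_i(x_i)|+\p{\sum_i|h_i(x_i)|^p}^{1/p}$; integrating over $\Omega^n$ and applying $\n{\cdot}_{L^1(\Omega^n)}\leq\n{\cdot}_{L^p(\Omega^n)}$ (Jensen, $\Omega^n$ being a probability space) to the second summand yields $\int_{\Omega^n}\p{\sum_i|f_i(x_i)|^p}^{1/p}\,\d x\leq\sum_i\n{g_i}_{L^1(\Omega)}+\p{\sum_i\n{h_i}_{L^p(\Omega)}^p}^{1/p}$, and taking the infimum over splittings finishes this direction.

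The substance is the reverse estimate ``$\gtrsim$'', which is the classical theorem of Johnson and Schechtman \cite{JSch}. The plan is not to reprove it in isolation but to deduce it from the weighted inequality established in this section (Theorem \ref{premainlemma}, stated in simplified form as Theorem \ref{intrweighted}); this is exactly the route that keeps the constant independent of $p$, since the constant in the weighted inequality is. I would apply that inequality with $r=p$, to the independent nonnegative random variables $|f_i(x_i)|$ on $\Omega^n$, and with trivial weights $W_i\equiv1$ (so $\E(W_i\mid X_i)=1$, and $\kappa=1$ is admissible). Under the normalization $\int_{\Omega^n}\p{\sum_i|f_i(x_i)|^p}^{1/p}\,\d x\leq1$ it provides a decomposition $|f_i(x_i)|=Y_i+Z_i$ into nonnegative random variables with $\p{\sum_i\E Y_i^p}^{1/p}+\sum_i\E Z_i\lesssim1$, uniformly in $p$. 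If the $Y_i,Z_i$ are not already functions of $x_i$ alone, I would replace them by $\E(Y_i\mid x_i)$ and $\E(Z_i\mid x_i)$: this still sums to $|f_i(x_i)|$, leaves $\sum_i\E Z_i$ unchanged, and by Jensen does not increase $\sum_i\E Y_i^p$.

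It then remains to upgrade this to a decomposition of the functions $f_i$ themselves, with disjoint supports. Set $E_i=\{Y_i\geq Z_i\}\subset\Omega$ and put $h_i=f_i\indicator_{E_i}$, $g_i=f_i-h_i=f_i\indicator_{E_i^{\mathrm c}}$; then $f_i=g_i+h_i$, the supports of $g_i$ and $h_i$ are disjoint, and pointwise $|h_i|\leq2Y_i$ (on $E_i$ one has $|f_i|=Y_i+Z_i\leq2Y_i$, and off $E_i$ one has $h_i=0$), and symmetrically $|g_i|\leq2Z_i$. Therefore $\p{\sum_i\n{h_i}_{L^p(\Omega)}^p}^{1/p}\leq2\p{\sum_i\E Y_i^p}^{1/p}\lesssim1$ and $\sum_i\n{g_i}_{L^1(\Omega)}\leq2\sum_i\E Z_i\lesssim1$, with a bound not depending on $p$; this is the asserted decomposition, and in particular it gives \eqref{eq:JSchineq} (for the equivalence without the disjointness clause the general decomposition is immediate).

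The main obstacle is concentrated entirely in the weighted inequality (Theorem \ref{premainlemma}), whose proof is the technical core of this section; the reduction sketched above is routine. The one point requiring care in the reduction is the step ``$Y_i,Z_i$ may be taken to depend on $x_i$ alone'', which is what makes $E_i$ a subset of a single factor of $\Omega^n$ and hence guarantees that $g_i$ and $h_i$ have genuinely disjoint supports in $\Omega$.
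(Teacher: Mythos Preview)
Your proof is correct and follows essentially the same route as the paper: both prove ``$\lesssim$'' trivially with constant $1$, deduce ``$\gtrsim$'' by specializing the weighted inequality (Theorem~\ref{premainlemma}) to trivial weights $w\equiv1$, $\kappa=1$, $\varepsilon=0$, and then upgrade to disjoint supports by the same thresholding trick (the paper compares $g$ to $\tfrac12 f$ rather than $Y_i$ to $Z_i$, but this is cosmetic). The only minor differences are that the paper first discretizes to finite $\Omega$ (needed since Theorem~\ref{premainlemma} is stated only for finite spaces), and that because Theorem~\ref{premainlemma} already outputs an $(L^1+L^p)$ norm on $\bigsqcup_i(\Omega,\mathcal F_i,\mu)$ the decomposition pieces are automatically $\mathcal F_i$-measurable, making your conditional-expectation step (while correct) unnecessary if you invoke that theorem directly rather than the simplified Theorem~\ref{intrweighted}.
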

This can be use to expresses $L^p$ norm, $1\leq p\leq 2$ (the case $\infty>p\geq 2$ is handled by Rosenthal inequality), on $\V_1^p$ as a rearrangement invariant norm on the disjoint union $\coprod_i \Omega$ in the following way. For $f\in \V_1^p$, we have 
\beq f(x)=\sum_i f_i\p{x_i}\eeq 
for some $f_i$ of mean $0$ and consequently 
\beq \|f\|_{L^p\p{\Omega^n}}\simeq_p \p{\int_{\Omega^n}\p{\sum_i \left|f_i\p{x_i}\right|^2}^\frac{p}{2}\d x}^\frac{1}{p}\eeq 
by Marcinkiewicz-Zygmund inequality. Applying Theorem \ref{JSch} with expontent $\frac{2}{p}$ to $\left|f_i\right|^{p}$, we get an equivalent form of Theorem \ref{JSch}. 
\begin{cor}\label{1varmzmz}For $1\leq p\leq 2$ and $f_i\in L^p(\Omega)$ of mean zero,
\beq \p{\int_{\Omega^n} \left|\sum_i f_i\p{x_i}\right|^p}^\frac{1}{p}\simeq_p \inf_{\bigsqcup_i f_i =g+h} \|g\|_{L^2\p{\bigsqcup_i\Omega}}+ \|h\|_{L^p\p{\bigsqcup_i\Omega}}.\eeq
\end{cor}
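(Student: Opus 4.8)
The plan is to deduce the claim from Theorem \ref{JSch} applied with the exponent $q:=\tfrac2p\in[1,2]$ to the nonnegative functions $\phi_i:=\left|f_i\right|^p\in L^1\p{\Omega}$, using the Marcinkiewicz--Zygmund inequality to pass between the $L^p$ norm of the sum and its square function. Write $f:=\sum_i f_i(x_i)$. Since the summands $x\mapsto f_i(x_i)$ on $\Omega^n$ are independent and mean zero, the scalar Marcinkiewicz--Zygmund inequality (cf. Lemma \ref{marzyglp}) gives
\[\n{f}_{L^p\p{\Omega^n}}^p\simeq_p\int_{\Omega^n}\p{\sum_i\left|f_i(x_i)\right|^2}^{p/2}\d x=\int_{\Omega^n}\p{\sum_i\phi_i(x_i)^{q}}^{1/q}\d x,\]
since $\tfrac1q=\tfrac p2$; the right-hand side is exactly the left-hand side of \eqref{eq:JSchineq} for $\phi_1,\dots,\phi_n$ with exponent $q$, and note $\phi_i^{q}=\left|f_i\right|^2$.

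For ``$\gtrsim$'' I would feed $\phi_1,\dots,\phi_n$ into the constructive part of Theorem \ref{JSch}, obtaining $\phi_i=G_i+H_i$ with $G_i,H_i$ of disjoint support and $\sum_i\int G_i+\p{\sum_i\int H_i^{q}}^{1/q}\lesssim\int_{\Omega^n}\p{\sum_i\phi_i(x_i)^{q}}^{1/q}\d x$; since $\phi_i\geq0$ the disjointness forces $G_i=\phi_i\chi_{E_i}$, $H_i=\phi_i\chi_{\Omega\setminus E_i}$ for some $E_i\in\mathcal{F}$. Setting $h_i:=f_i\chi_{E_i}$ and $g_i:=f_i\chi_{\Omega\setminus E_i}$, we get $f_i=g_i+h_i$, $\n{\coprod_i h_i}_{L^p\p{\coprod_i\Omega}}^p=\sum_i\int G_i$, and (using $\phi_i^{q}=\left|f_i\right|^2$) $\n{\coprod_i g_i}_{L^2\p{\coprod_i\Omega}}^p=\p{\sum_i\int H_i^{q}}^{1/q}$. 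Combining with the first display, $\n{f}_{L^p\p{\Omega^n}}^p\gtrsim_p\n{\coprod_i h_i}_{L^p}^p+\n{\coprod_i g_i}_{L^2}^p\geq\tfrac12\p{\n{\coprod_i g_i}_{L^2}+\n{\coprod_i h_i}_{L^p}}^p$, the last step from $a^p+b^p\geq2^{1-p}(a+b)^p$ for $p\leq2$; taking $p$-th roots and then the infimum gives this direction.

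For ``$\lesssim$'', given an arbitrary decomposition $\coprod_i f_i=g+h$, i.e. $f_i=g_i+h_i$, I would first recenter: as $\E f_i=0$, the numbers $c_i:=\E g_i=-\E h_i$ satisfy $\left|c_i\right|\leq\n{g_i}_{L^2\p{\Omega}}$ and $\left|c_i\right|\leq\n{h_i}_{L^p\p{\Omega}}$ (because $\Omega$ is a probability space and $p\geq1$), so replacing $g_i,h_i$ by the mean-zero functions $g_i-c_i$, $h_i+c_i$ at most doubles $\n{\coprod_i g_i}_{L^2}$ and $\n{\coprod_i h_i}_{L^p}$. With $g_i,h_i$ mean zero, the triangle inequality yields $\n{f}_{L^p\p{\Omega^n}}\leq\n{\sum_i g_i(x_i)}_{L^p}+\n{\sum_i h_i(x_i)}_{L^p}$; the first term is at most $\n{\sum_i g_i(x_i)}_{L^2\p{\Omega^n}}=\n{\coprod_i g_i}_{L^2\p{\coprod_i\Omega}}$ by $p\leq2$ on the probability space $\Omega^n$ and orthogonality of independent mean-zero summands, and the second is $\simeq_p\n{\p{\sum_i\left|h_i(x_i)\right|^2}^{1/2}}_{L^p\p{\Omega^n}}\leq\p{\sum_i\int\left|h_i\right|^p}^{1/p}=\n{\coprod_i h_i}_{L^p\p{\coprod_i\Omega}}$ by Marcinkiewicz--Zygmund and subadditivity of $t\mapsto t^{p/2}$ (as $p/2\leq1$). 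Passing to the infimum over decompositions completes the proof.

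The step I expect to be the crux is the asymmetry between the two bounds. For ``$\gtrsim$'' the disjoint-support conclusion of Theorem \ref{JSch} transports a decomposition of $\left|f_i\right|^p$ to a signed decomposition of $f_i$ at no cost, whereas for ``$\lesssim$'' no such mechanism is available --- one cannot split $\left|g_i+h_i\right|^p$ along a prescribed splitting of $g_i+h_i$ --- so the recentering step is doing the real work, and it is legitimate precisely because the $f_i$ are centered: without it the variance of $\sum_i g_i(x_i)$ would pick up a term $\p{\sum_i\n{g_i}_{L^1}}^2$, which can exceed $\n{\coprod_i g_i}_{L^2}^2$ by a factor of order $n$.
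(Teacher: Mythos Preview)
Your proof is correct and follows the paper's approach: Marcinkiewicz--Zygmund plus Theorem~\ref{JSch} with exponent $2/p$ applied to $|f_i|^p$, with the disjoint-support clause doing the lifting from a decomposition of $|f_i|^p$ to one of $f_i$ in the ``$\gtrsim$'' direction. The paper's own argument is a single sentence declaring the corollary ``an equivalent form'' of Theorem~\ref{JSch}, so your write-up is simply more explicit.

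The one place you diverge slightly is the ``$\lesssim$'' direction. The paper implicitly relies on the general convexification fact $\||F|^p\|_{L^1+L^{2/p}}^{1/p}\simeq\|F\|_{L^p+L^2}$ for the interpolation sum on $\coprod_i\Omega$ (which holds for arbitrary $F$, mean zero or not, via the disjoint-support trick $E=\{|G|\geq|H|\}$). You instead recenter $g_i,h_i$ and then estimate $\|\sum g_i(x_i)\|_{L^p}$ and $\|\sum h_i(x_i)\|_{L^p}$ directly by orthogonality and Marcinkiewicz--Zygmund. Both routes are valid; yours is more hands-on and has the side benefit of making transparent why the mean-zero hypothesis matters only through the Marcinkiewicz--Zygmund step. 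Note also that the paper's postscript remark $f_i=(\id-\E)g_i+(\id-\E)h_i$ is exactly your recentering, so the idea is present in the paper too, just positioned as an afterthought rather than part of the proof.
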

The building blocks $g_i$, $h_i$ of $g,h$ in the above can be also chosen to be of mean $0$, because 
\beq f_i= (\id-\E)f_i= (\id-\E)g_i+ (\id-\E)h_i.\eeq \par
In this section we will develop a weighted version of Theorem \ref{JSch} (in particular producing a new proof with constant independent on $p$), which will be Theorem \ref{premainlemma}, which for $J$ a singleton, $w\equiv1$, $\kappa=1$ and $\varepsilon=0$ gives a constant $\frac{1}{2}$. \par
We start with a calculation lemma.
\begin{lem}\label{calclemma}
Let $\phi:\R^\N\to \R$ be a function differentiable outside of $0$ and satisfying $\phi(cx)=c\phi(x)$ for $c>0$. Then
\beq \sum_{k=1}^N x_k \frac{\partial \phi(x)}{\partial x_k}=\phi(x).\eeq
\end{lem}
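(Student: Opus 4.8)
The plan is to recognize this as the classical Euler identity for functions that are positively homogeneous of degree one, and to prove it by differentiating the scaling relation in the radial direction. Concretely, I would fix a point $x \neq 0$ and consider the auxiliary one-variable function $g \colon (0,\infty) \to \R$ defined by $g(c) = \phi(cx)$. The homogeneity hypothesis $\phi(cx) = c\phi(x)$ says precisely that $g(c) = c\,\phi(x)$ is linear in $c$, so on one hand $g'(c) = \phi(x)$ for every $c > 0$. On the other hand, since $cx \neq 0$ whenever $c > 0$ and $\phi$ is differentiable away from the origin, the chain rule gives
\[ g'(c) = \sum_{k=1}^N x_k \, \frac{\partial \phi}{\partial x_k}(cx). \]
Equating the two expressions and specializing to $c = 1$ produces $\sum_{k=1}^N x_k \, \frac{\partial \phi}{\partial x_k}(x) = \phi(x)$, which is exactly the assertion.

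I would also add a short remark that nothing needs to be checked at the origin: substituting $x = 0$ into $\phi(cx) = c\phi(x)$ forces $\phi(0) = c\phi(0)$ for all $c > 0$, hence $\phi(0) = 0$, but as $\phi$ is only assumed differentiable outside $0$ the statement is understood (and the argument above applies) at every point where the partial derivatives exist.

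I do not expect any real obstacle. The single point requiring a moment's attention is that differentiability is assumed only off the origin, and this is precisely what the argument uses: for $x \neq 0$ the ray $\{cx : c > 0\}$ stays inside the region where $\phi$ is differentiable, so the chain-rule computation of $g'(c)$ is legitimate for all $c > 0$, and letting $c \to 1$ is harmless.
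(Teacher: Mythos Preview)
Your argument is correct; it is the classical Euler identity proof and is airtight once you note that the ray $\{cx:c>0\}$ avoids the origin.

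The paper takes a different, more computational route: it writes $\phi(x)=|x|\,\phi\bigl(x/|x|\bigr)$ and applies the product rule, after first verifying by direct calculation that $\sum_k x_k\,\partial_{x_k}(x/|x|)=0$. Your approach is shorter and more standard, and avoids any explicit computation with the Euclidean norm; the paper's version, on the other hand, separates the radial factor $|x|$ from the angular part $\phi(x/|x|)$ explicitly, which is perhaps thematically closer to how the lemma is later used (with $\phi=\|\cdot\|_X$), but is otherwise a detour. Either proof is entirely adequate here.
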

\begin{proof}
Let $|\cdot|$ denote the Euclidean norm. By direct calculation we verify that
\beq \frac{\partial}{\partial x_k} \frac{x_j}{|x|}= \delta_{jk}|x|^{-1}-x_jx_k|x|^{-3}.\eeq
Therefore
\beq \sum_k x_k \frac{\partial}{\partial x_k} \frac{x}{|x|}=0.\eeq
Thus
\begin{eqnarray}\sum_{k=1}^N x_k \frac{\partial \phi(x)}{\partial x_k}&=& \sum_{k=1}^N x_k \frac{\partial }{\partial x_k}\p{|x|\phi\p{\frac{x}{|x|}}}\\
&=& \sum_k x_k\p{ \frac{x_k}{|x|}\phi\p{\frac{x}{|x|}}+ |x|\left\langle \nabla\phi\p{\frac{x}{|x|}}, \frac{\partial}{\partial x_k}\frac{x}{|x|}\right\rangle}\\
&=&\sum_k \frac{x_k^2}{|x|}\phi\p{\frac{x}{|x|}}+ |x|\left\langle \nabla\phi\p{\frac{x}{|x|}}, \sum_k x_k\frac{\partial}{\partial x_k}\frac{x}{|x|}\right\rangle\\
&=& \phi(x).
\end{eqnarray}
\end{proof}
The following inequality is useful for obtaining lower bounds for subspaces of vector valued $L^1$. 
\begin{lem}\label{gradlemma} Let $N\in \N$, $\p{\Omega,\mathcal{F},\mu}$ be a finite probability space, $V_k$ be subspaces of $L^2\p{\Omega}$, $V$ be the subspace of $L^2\p{\Omega,\ell^2_N}$ consisting of sequences $\p{f_{n}}_{n=1}^N$ of functions such that $f_n\in V_n$, $P_{V_k}$, $P_V$ be orthogonal projections onto $V_k$, $V$ respectively, $\|\cdot\|_X$ be a random norm on $\R^N$ differentiable outside of $0$, $\|\cdot\|_Y$ be a norm on $V$, $Y^*$ be the dual norm on $V$ in the sense of the usual pairing. Then for any constants $C>0$, $q> 1$, the following are equivalent:\\
(i) for any $\p{f_1,\ldots,f_N}\in V$,
\beq \label{eq:lemgradi}\E \left\|\p{f_n}_{n=1}^N\right\|_X^q\geq C\left\|\p{f_n}_{n=1}^N\right\|_Y^q\eeq
(ii) for any $\p{f_1,\ldots,f_N}\in V$ not identically zero,
\beq \label{eq:lemgradii}\left\| P_V\p{\|f\|_X^{q-1} \nabla\| \cdot \|_X (f)}\right\|_{Y^*}\geq C \|f\|_Y^{q-1},\eeq
where $\nabla\|\cdot\|_X$ is extended to be equal to $0$ in $0$. Moreover, if for any $\p{f_1,\ldots,f_N}\in V$ not identically zero,
\beq \label{eq:weirdcond}\left\| P_{V\cap F}\p{\nabla\|\cdot\|_X(f)}\right\|_{Y^*}\geq C\left\|P_{V\cap F}\right\|_{Y^*\to Y^*},\eeq
where $F=\left\{\varphi\in V: \supp \varphi_i\subset \supp f_i\right\}$, then 
\beq \label{eq:l1xgeqcy}\E\|f\|_X\geq C\|f\|_Y\eeq 
for all $f\in V$. 
\end{lem}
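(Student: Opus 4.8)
The plan is to view, for a fixed $q>1$, the functional
\[
\Phi(f):=\E\|f\|_X^q
\]
as a function on the finite-dimensional space $V$. Since $\Omega$ is finite, $\Phi(g)=\sum_\omega\mu(\{\omega\})\|g(\omega)\|_{X,\omega}^q$ is a finite sum of compositions of the linear evaluations $g\mapsto g(\omega)$ with the maps $x\mapsto\|x\|_{X,\omega}^q$ on $\R^N$, which are $C^1$ for $q>1$ (the norm is differentiable off $0$ by hypothesis, and the $q$-th power extends $C^1$-ly to $0$); hence $\Phi$ is $C^1$, positively homogeneous of degree $q$, and coercive. Differentiating termwise, the gradient of $\Phi$ at $f$, regarded as an element of $V$ via the $L^2(\Omega,\ell^2_N)$ pairing $\langle\cdot,\cdot\rangle$, is the orthogonal projection onto $V$ of the ``unconstrained'' gradient $\|f\|_X^{q-1}\nabla\|\cdot\|_X(f)$, i.e. $\nabla\Phi(f)=q\,G(f)$ with $G(f)=P_V(\|f\|_X^{q-1}\nabla\|\cdot\|_X(f))$. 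Lemma~\ref{calclemma} applied to the degree-one function $\|\cdot\|_X$ gives $\langle\nabla\|\cdot\|_X(x),x\rangle=\|x\|_X$, whence $\langle G(f),f\rangle=\Phi(f)$ and $\langle\nabla\Phi(f),f\rangle=q\Phi(f)$. Both sides of \eqref{eq:lemgradii} are positively homogeneous of degree $q-1$ in $f$, and \eqref{eq:lemgradi} is homogeneous of degree $q$, so in proving \eqref{eq:lemgradi}$\iff$\eqref{eq:lemgradii} it suffices to treat $f$ with $\|f\|_Y=1$.

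\textbf{The equivalence.}
For \eqref{eq:lemgradi}$\Rightarrow$\eqref{eq:lemgradii}: if $f\neq 0$, testing the dual norm of $\nabla\Phi(f)$ against $f/\|f\|_Y$ gives $\|\nabla\Phi(f)\|_{Y^*}\ge\langle\nabla\Phi(f),f\rangle/\|f\|_Y=q\Phi(f)/\|f\|_Y\ge qC\|f\|_Y^{q-1}$, and dividing by $q$ (using $\nabla\Phi=qG$) yields \eqref{eq:lemgradii}. For the converse, fix $f$ with $\|f\|_Y=1$; choose by Hahn--Banach an $\ell\in V$ with $\|\ell\|_{Y^*}=1$ and $\langle\ell,f\rangle=1$, and minimize the coercive $C^1$ function $\Phi$ over the hyperplane $H=\{g\in V:\langle\ell,g\rangle=1\}$. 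A minimizer $f_0$ exists, is nonzero, and satisfies $\Phi(f_0)\le\Phi(f)$ because $f\in H$; the Lagrange condition gives $\nabla\Phi(f_0)=\lambda\ell$ for some $\lambda\in\R$, and pairing with $f_0$ together with the Euler identity gives $\lambda=q\Phi(f_0)\ge 0$, so $\|\nabla\Phi(f_0)\|_{Y^*}=q\Phi(f_0)$. Then \eqref{eq:lemgradii} (in the form $\|\nabla\Phi(f_0)\|_{Y^*}\ge qC\|f_0\|_Y^{q-1}$) forces $\Phi(f_0)\ge C\|f_0\|_Y^{q-1}$, and since $1=\langle\ell,f_0\rangle\le\|f_0\|_Y$ and $q-1>0$ we conclude $\Phi(f)\ge\Phi(f_0)\ge C=C\|f\|_Y^q$.

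\textbf{The last assertion.}
For \eqref{eq:weirdcond}$\Rightarrow$\eqref{eq:l1xgeqcy} I would run the same scheme with $q=1$, where $\Phi(f)=\E\|f\|_X$ is itself a norm on $V$ (convex, coercive) but no longer differentiable. Fix $f$ with $\|f\|_Y=1$, choose $\ell$ as above, and let $f_0$ minimize $\Phi$ over $H$. Since the normal cone of the hyperplane $H$ is $\R\ell$, optimality gives the inclusion $\lambda\ell\in\partial\Phi(f_0)$ for some $\lambda\in\R$; by the subdifferential sum rule (legitimate as $\Omega$ is finite) and the fact that each $\|\cdot\|_{X,\omega}$ is differentiable off $0$, every element of $\partial\Phi(f_0)$ is $P_Vv$ with $v(\omega)=\nabla\|\cdot\|_{X,\omega}(f_0(\omega))$ whenever $f_0(\omega)\neq 0$ and $v(\omega)$ in the closed dual unit ball of $\|\cdot\|_{X,\omega}$ whenever $f_0(\omega)=0$. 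The sublinear Euler identity $\langle\xi,f_0\rangle=\Phi(f_0)$ for $\xi\in\partial\Phi(f_0)$ (feed $y=tf_0$ into the subgradient inequality and let $t\to 0,\infty$) yields $\lambda=\Phi(f_0)\ge 0$, hence $\|P_Vv\|_{Y^*}=\lambda$. The crucial point is that $P_Fv=P_F(\nabla\|\cdot\|_X(f_0))$, where $F=\{\varphi\in V:\supp\varphi_i\subset\supp (f_0)_i\}=V\cap F$ and $\nabla\|\cdot\|_X(f_0)$ is extended by $0$ on the zero set of $f_0$: indeed $v$ agrees with $\nabla\|\cdot\|_X(f_0)$ wherever $f_0(\omega)\neq0$, while every $\varphi\in F$ vanishes at every $\omega$ with $f_0(\omega)=0$, so $v-\nabla\|\cdot\|_X(f_0)\perp F$. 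Since $F\subset V$ we also have $P_Fv=P_F(P_Vv)=\lambda P_F\ell$, whence $\|P_F(\nabla\|\cdot\|_X(f_0))\|_{Y^*}=\lambda\|P_F\ell\|_{Y^*}\le\lambda\|P_F\|_{Y^*\to Y^*}$. As $f_0\in F\setminus\{0\}$, the idempotent $P_F$ has $\|P_F\|_{Y^*\to Y^*}\ge 1>0$, so applying \eqref{eq:weirdcond} at $f_0$ and dividing gives $C\le\lambda=\Phi(f_0)\le\Phi(f)=\E\|f\|_X$, which is \eqref{eq:l1xgeqcy} when $\|f\|_Y=1$; the general case follows by homogeneity.

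\textbf{Main obstacle.}
I expect the $q=1$ step to be the real difficulty. For $q>1$ the argument collapses to a one-line Lagrange computation on the smooth, homogeneous functional $\Phi$; for $q=1$ the functional is only sublinear, so the Lagrange identity must be replaced by a subdifferential inclusion, and the subgradient $v$ is determined only up to an arbitrary element of the dual unit ball at those atoms where $f_0$ vanishes. Verifying that this ambiguity disappears after projecting onto $V\cap F$ — because members of $F$ are supported where $f_0$ is nonzero — is the heart of the proof, and it is exactly what dictates the unusual form of \eqref{eq:weirdcond}, with the operator norm $\|P_{V\cap F}\|_{Y^*\to Y^*}$ on the right in place of a power of $\|f_0\|_Y$. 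The remaining ingredients (existence of minimizers, the two Euler identities, the subdifferential sum rule over a finite measure space) are routine finite-dimensional convex analysis, and I would present them briefly.
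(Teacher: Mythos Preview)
Your proof is correct and follows essentially the same strategy as the paper: for $q>1$ you reduce to a Lagrange-multiplier computation on the hyperplane $\{\langle\ell,\cdot\rangle=1\}$, using Lemma~\ref{calclemma} (Euler's identity) to identify $\langle\nabla\Phi(f_0),f_0\rangle=q\Phi(f_0)$, which is exactly what the paper does. The only visible difference is in the $q=1$ step: you invoke the subdifferential sum rule and then observe that projecting onto $V\cap F$ kills the ambiguity coming from atoms where $f_0$ vanishes, whereas the paper sidesteps convex-analysis machinery by restricting the variation $h$ to $V\cap F$ from the outset, noting that for such $h$ the map $t\mapsto\|f_0+th\|_X$ is genuinely differentiable at $0$ pointwise (either $f_0(\omega)\neq0$ and the norm is smooth, or $f_0(\omega)=0$ forces $h(\omega)=0$ and the map is identically zero). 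Both routes yield the same relation $P_{V\cap F}\bigl(\nabla\|\cdot\|_X(f_0)\bigr)=\lambda\,P_{V\cap F}\ell$ and conclude identically; the paper's version is a touch more elementary, yours makes the role of $F$ in absorbing the non-smoothness more transparent.
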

\begin{proof}
Let us start with $q>1$. The implication (i)$\implies$(ii) is true for each $f$ separately. Indeed, if \eqref{eq:lemgradi} holds, then by self-adjointness of $P_{V_n}$ and Lemma \ref{calclemma} applied to $\|\cdot\|_X$ pointwise to $f$, 
\begin{align} \lefteqn{ \left\| P_V\p{\|f\|_X^{q-1} \nabla\| \cdot \|_X (f)} \right\|_{Y^*} \|f\|_Y}\\
\geq & \sum_n \E\p{ f_n P_{V_n}\p{ \|f\|_X^{q-1}  \partial_n\| \cdot \|_X (f)}}\\
= &  \E\p{ \sum_n f_n  \partial_n\|\cdot\|_X \|f\|_X^{q-1} }\\
=& \E\left\|f\right\|_X^q\\
\geq & C \left\|f\right\|_Y^q.\end{align}
We will prove the implication (ii)$\implies $(i) now. Let us assume $q>1$ first. We are in a fully finite-dimensional setting. It is enough to prove 
\beq \E \left\|\p{f_n}_{n=1}^N\right\|_X^q\geq C \left|\sum_n \E\p{f_n g_n}\right|^q\eeq
for any $g=\p{g_1,\ldots,g_N}\in V$ such that 
\beq \label{eq:lemnormg} \|g\|_{Y^*}=1.\eeq
By homogeneity, we can set
\beq \label{eq:lemnormf}\sum_n \E\p{f_n g_n}=1.\eeq
Since $\p{\E\|\cdot\|_X^q}^\frac{1}{q}$ is a norm on $V$, under the constraint \eqref{eq:lemnormf} it is a convex function going to $\infty$ in infinity, so it attains a minimum. Let $f$ be a minimizer of $\E\|\cdot\|_X^q$. Suppose for a moment $q>1$. For any $h\in V$ such that $\langle h,g\rangle=0$ we have
\begin{eqnarray} 0&=& \left.\frac{\d}{\d t}\right|_{t=0} \E\left\|f+th\right\|_X^q\\
&=& q\E \left\langle \|f\|_X^{q-1}\nabla\|\cdot\|_X(f),h\right\rangle\\
&=& q\left\langle P_V \p{\|f\|_X^{q-1}\nabla\|\cdot\|_X(f)},h\right\rangle.
\end{eqnarray}
In other words, 
\beq \label{eq:kercontain}\ker g \subset \ker P_V \p{\|f\|_X^{q-1}\nabla\|\cdot\|_X(f)}\eeq when both are treated as functionals on $V$. 	Therefore
\beq P_V \p{\|f\|_X^{q-1}\nabla\|\cdot\|_X(f)}=\lambda g\eeq
for some $\lambda \in \mathbb{R}$, which by \eqref{eq:lemgradi} and \eqref{eq:lemnormg} gives 
\beq C \|f\|_Y^{q-1}\leq \left\| P_V \p{\|f\|_X^{q-1}\nabla\|\cdot\|_X(f)}\right\|_{Y^*}= |\lambda| \|g\|_{Y^*}= |\lambda|.\eeq
Ultimately, utilising Lemma \ref{calclemma} again,
\begin{eqnarray} \E\|f\|_X^q &=& \E \|f\|_X^{q-1}\|f\|_X\\
&=& \E\|f\|_X^{q-1}\sum_n f_n \partial_n \|\cdot\|_X (f)\\
&=& \E\sum_n f_n P_{V_n}\p{\|f\|_X^{q-1}\partial_n \|\cdot\|_X (f)}\\
&=& \left\langle f, P_V \p{\|f\|_X^{q-1}\nabla\|\cdot\|_X(f)}\right\rangle  \\
&=&\lambda \langle f,g\rangle =\lambda.\end{eqnarray}
Therefore $\lambda\geq 0$ and thus $\lambda=|\lambda|\geq C \|f\|_Y^{q-1}$ as desired. \par
Now we will prove that \eqref{eq:weirdcond} is sufficient for \eqref{eq:l1xgeqcy}. We will proceed in an analogous manner, but we have to take care of nondifferentiability of $\|\cdot\|_X$ in $0$. As previously, we pick $g\in V$ such that
\beq \|g\|_{Y^*}=1\eeq
and normalize $f$ to satisfy
\beq\label{eq:fg1oncemore}\langle f,g\rangle=1.\eeq
If $f$ is a minimizer of $\E\|f\|_X$ given \eqref{eq:fg1oncemore}, then for any $h\in V\cap F$, the function $t\mapsto \|f+th\|_X$ is in each point of $\Omega$ either differentiable in $0$ (when $f\neq 0$) or identically $0$ (when $f=0$ and consequently $h=0$). Thus, for any $h\in V\cap F$ such that $\langle h,g\rangle=0$,  
\begin{eqnarray} 0&=& \left.\frac{\d}{\d t}\right|_{t=0} \E\left\|f+th\right\|_X\\
&=& \E \left\langle \nabla\|\cdot\|_X(f),h\right\rangle\\
&=& \left\langle P_{V\cap F} \p{\nabla\|\cdot\|_X(f)},h\right\rangle.
\end{eqnarray}
Therefore $\ker g\subset \ker P_{V\cap F} \p{\nabla\|\cdot\|_X(f)}$ as functionals on $V\cap F$, so
\beq P_{V\cap F} \p{\nabla\|\cdot\|_X(f)}=\lambda P_{V\cap F}g,\eeq
in particular
\beq \left\|P_{V\cap F} \p{\nabla\|\cdot\|_X(f)}\right\|_{Y^*}=\left|\lambda\right| \left\|P_{V\cap F}g\right\|_{Y^*} \leq \left|\lambda\right| \left\|P_{V\cap F}\right\|_{Y^*\to Y^*},\eeq
so $\left|\lambda\right|\geq C$. Thus
\begin{eqnarray} \E\|f\|_X &=&\E \left\langle \nabla\|\cdot\|_X (f),f\right\rangle\\
&=& \left\langle P_{V\cap F} \nabla\|\cdot\|_X (f) ,f\right\rangle\\
&=& \lambda \left\langle P_{V\cap F} g ,f\right\rangle\\
&=& \lambda = \left|\lambda\right|\geq C.
\end{eqnarray}

\end{proof}
Now, we are ready for the main result of this section. The parameter $\varepsilon$ is for technical reasons and we will make most use of the case $\varepsilon=0$ and $\{0,1\}$-valued $w$, in which case the inequality is true with the constant $\frac{1}{2}\kappa^2$. Also, as will be noted in the proof of Theorem \ref{JSch}, the decomposition in the interpolation norm on the right hand side may always be chosen to be of disjoint supports at the cost of constant 2. Sometimes we will use a weaker version of Theorem \ref{premainlemma} with the norm on the right hand side replaced by a smaller norm $L^1\p{\Omega,\ell^p(I\times J)}$ of the sequence $\p{  \mathbbm{1}_{\left\{\E_i \p{w_{i,j}\vee\varepsilon}\geq \kappa\right\}}f_{i,j} }_{i,j} $. 
\begin{thm}\label{premainlemma}Let $\Omega$ be a finite probability space, $I,J$ be finite sets, $\p{\mathcal{F}_i}_{i\in I}$ be an independent family of sigma-algebras, $f_{i,j}$ be $\mathcal{F}_i$-measurable, $w_{i,j}$ be $[0,1]$-valued functions on $\Omega$ and $p\geq 1$, $0\leq\varepsilon<\kappa\leq 1$. Then
\beq \E\p{\sum_{i,j} \left|\p{w_{i,j}\vee\varepsilon}f_{i,j}\right|^p}^\frac{1}{p}\geq C_{p,\kappa}\left\| \bigsqcup_{i} \p{ \mathbbm{1}_{\left\{\E_i \p{w_{i,j}\vee\varepsilon}\geq \kappa\right\}}f_{i,j} }_{j\in J}\right\|_{\p{L^1+L^p}\p{\bigsqcup_i \p{\Omega,\mathcal{F}_i,\mu},\ell^p(J)}},\eeq
where $C_{p,\kappa}=\kappa^p 2^{-\frac{1}{p'}}$. Moreover, if $w_{i,j}$ are $\{0,1\}$-valued, the inequality holds with constant $C_{p,\kappa}=\p{\kappa-\varepsilon}^{2-\frac{1}{p}} 2^{-\frac{1}{p'}}$. 
 \end{thm}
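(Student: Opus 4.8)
The plan is to derive Theorem~\ref{premainlemma} from the sufficient condition \eqref{eq:weirdcond} of Lemma~\ref{gradlemma}. I begin with two reductions. Writing $A_{i,j}:=\{\E_i(w_{i,j}\vee\varepsilon)\ge\kappa\}$, replacing each $f_{i,j}$ by $\mathbbm{1}_{A_{i,j}}f_{i,j}$ does not increase the left-hand side and leaves the right-hand side unchanged, so I may assume $\supp f_{i,j}\subset A_{i,j}$ for all $i,j$; and since the left-hand side is monotone in $\varepsilon$ while the right-hand side and $C_{p,\kappa}$ converge to their $\varepsilon=0$ values as $\varepsilon\downarrow 0$, I may assume $\varepsilon>0$, so that $\|x\|_X:=\bigl(\sum_{i,j}((w_{i,j}\vee\varepsilon)|x_{i,j}|)^p\bigr)^{1/p}$ is, for a.e.\ fixed $\omega$, an honest norm on $\R^{I\times J}$, differentiable off the origin. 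Now I apply Lemma~\ref{gradlemma} with $V$ the subspace of $L^2(\Omega,\ell^2(I\times J))$ of families $(f_{i,j})$ with $f_{i,j}$ $\mathcal F_i$-measurable and $\supp f_{i,j}\subset A_{i,j}$, with this random norm $X$, and with $\|\cdot\|_Y$ the $(L^1+L^p)\bigl(\coprod_i\Omega,\ell^p(J)\bigr)$-norm of the corresponding function on the disjoint union. Then $\E\|f\|_X$ is exactly the left-hand side of the theorem and $\|f\|_Y$ the norm on the right, so it is enough to verify \eqref{eq:weirdcond}.

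For that I first identify $Y^*=(L^\infty\cap L^{p'})\bigl(\coprod_i\Omega,\ell^{p'}(J)\bigr)$ and observe that, since $\supp f_{i,j}$ is $\mathcal F_i$-measurable, $V\cap F$ is the set of $(\varphi_{i,j})$ with $\varphi_{i,j}$ $\mathcal F_i$-measurable and $\supp\varphi_{i,j}\subset\supp f_{i,j}$, so that $P_{V\cap F}$ acts coordinatewise as $\psi_{i,j}\mapsto\mathbbm{1}_{\supp f_{i,j}}\E_i\psi_{i,j}$. As conditional expectation and multiplication by an indicator are contractions on $L^\infty$ and on $L^{p'}$ (the $\ell^{p'}(J)$-valued versions following by conditional Jensen), $\|P_{V\cap F}\|_{Y^*\to Y^*}\le 1$, so the right-hand side of \eqref{eq:weirdcond} is at most $C$. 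From the explicit gradient of the weighted $\ell^p$-norm one obtains $P_{V\cap F}\bigl(\nabla\|\cdot\|_X(f)\bigr)_{i,j}=|f_{i,j}|^{p-1}\sgn(f_{i,j})\,b_{i,j}$ with $b_{i,j}:=\E_i\bigl[\|f\|_X^{1-p}(w_{i,j}\vee\varepsilon)^p\bigr]$, the $\mathcal F_i$-measurable factor $|f_{i,j}|^{p-1}\sgn f_{i,j}$ coming out of $\E_i$. The decisive estimate is a lower bound for $b_{i,j}$ on $\supp f_{i,j}\subset A_{i,j}$: factoring $w_{i,j}\vee\varepsilon=\bigl((w_{i,j}\vee\varepsilon)^p\|f\|_X^{1-p}\bigr)^{1/p}\cdot\|f\|_X^{(p-1)/p}$ and applying conditional Hölder with exponents $p,p'$ yields $\kappa\le\E_i(w_{i,j}\vee\varepsilon)\le b_{i,j}^{1/p}(\E_i\|f\|_X)^{1/p'}$ there, hence $b_{i,j}\ge\kappa^p(\E_i\|f\|_X)^{-(p-1)}$ and consequently $\bigl|P_{V\cap F}(\nabla\|\cdot\|_X(f))_{i,j}\bigr|\ge\kappa^p\bigl(|f_{i,j}|/\E_i\|f\|_X\bigr)^{p-1}$. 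When the $w_{i,j}$ take only the values $0,1$ one instead splits $b_{i,j}$ over $\{w_{i,j}=1\}$, where the $(i,j)$-th coefficient inside $\|f\|_X^p$ is $1$; this is what produces the sharper exponent $2-\tfrac1p$ of $\kappa-\varepsilon$.

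What remains is to bound the $Y^*$-norm of $\bigl((|f_{i,j}|/G_i)^{p-1}\bigr)_{i,j}$ below by $2^{-1/p'}$, where $G_i:=\E_i\|f\|_X$; together with the previous step and $\|P_{V\cap F}\|_{Y^*\to Y^*}\le 1$ this verifies \eqref{eq:weirdcond} with $C=\kappa^p2^{-1/p'}$ (respectively $(\kappa-\varepsilon)^{2-1/p}2^{-1/p'}$ in the $\{0,1\}$-valued case), and Lemma~\ref{gradlemma} concludes. Setting $\phi_i:=\bigl(\sum_j|f_{i,j}|^p\bigr)^{1/p}$ and using $\|f\|_X\le\bigl(\sum_{i}\phi_{i}^p\bigr)^{1/p}$ (the weights are $\le 1$), and abbreviating $\rho_i:=\phi_i/\E_i\bigl(\sum_{i'}\phi_{i'}^p\bigr)^{1/p}$, this reduces to the scalar inequality
\[
\max\Bigl(\bigl(\textstyle\max_i\|\rho_i\|_\infty\bigr)^{p-1},\ \bigl(\textstyle\sum_i\|\rho_i\|_{L^p(\Omega)}^p\bigr)^{1/p'}\Bigr)\ \ge\ 2^{-1/p'}
\]
for arbitrary nonnegative $\mathcal F_i$-measurable $\phi_i$. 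This last inequality --- a miniature Johnson--Schechtman estimate, and essentially the only point where the $p$-independence of the constant must be fought for --- is the main obstacle. I expect to prove it by a dichotomy: if every $\rho_i$ has $\|\rho_i\|_\infty<2^{-1/p}$, then comparing each conditional scale $\E_i\|f\|_X$ with the global one by a level-set (stopping-time) argument, and using independence so that $\E_i\bigl[\sum_{i'\ne i}\phi_{i'}^p\bigr]=\sum_{i'\ne i}\E\phi_{i'}^p$, forces $\sum_i\|\rho_i\|_p^p\ge\tfrac12$. The rest --- identifying $P_{V\cap F}$, the contraction/Jensen bookkeeping, and the Hölder step --- is routine.
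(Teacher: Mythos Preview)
Your overall architecture is the paper's: reduce to the sufficient condition \eqref{eq:weirdcond} of Lemma~\ref{gradlemma}, identify $P_{V\cap F}$ as coordinatewise $\mathbbm{1}_{\supp f_{i,j}}\E_i$, check it contracts $Y^*$, strip the weights by conditional H\"older, and close with a dichotomy. Two points need fixing.

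\textbf{The endgame.} Your H\"older (exponents $p,p'$) leaves the $L^1$-scale $\E_i\|f\|_X$ in the denominator, which you bound by $\E_i\bigl(\sum_{i'}\phi_{i'}^p\bigr)^{1/p}$. With that denominator the ``level-set/stopping-time'' plan is vague, and the independence identity you cite concerns $\E_i\sum\phi_{i'}^p$, not $\E_i(\sum\phi_{i'}^p)^{1/p}$. One Jensen fixes everything: $\E_i\bigl(\sum\phi_{i'}^p\bigr)^{1/p}\le\bigl(\E_i\sum\phi_{i'}^p\bigr)^{1/p}$, so replace $\rho_i$ by the smaller $\tilde\rho_i:=\phi_i/\bigl(\E_i\sum\phi_{i'}^p\bigr)^{1/p}$. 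Then the dichotomy is three lines with no stopping time: if $\|\tilde\rho_i\|_\infty<2^{-1/p}$ for all $i$, then $\phi_i^p<\tfrac12\bigl(\phi_i^p+\sum_{i'\ne i}\E\phi_{i'}^p\bigr)$, hence $\E_i\sum\phi_{i'}^p<2\sum_{i'}\E\phi_{i'}^p$, so $\sum_i\E\tilde\rho_i^p\ge\tfrac12$. This is exactly the paper's close; the only difference is that the paper's H\"older (with exponent $\tfrac{2p-1}{p}$ rather than $p$) lands directly on $\bigl(\E_i\|f\|_X^p\bigr)^{1/p}$ and skips the Jensen detour.

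\textbf{The $\{0,1\}$-valued constant.} Your H\"older with exponents $p,p'$ cannot give $(\kappa-\varepsilon)^{2-1/p}$: restricting to $\{w_{i,j}=1\}$ replaces $\kappa$ by $\E_i w_{i,j}\ge\kappa-\varepsilon$, but the exponent stays $p$, yielding only $(\kappa-\varepsilon)^p$, which for $\kappa<1$, $p>1$ is strictly smaller than the claimed constant. The sharper exponent $2-\tfrac1p$ comes precisely from choosing H\"older so that the conditional $L^p$-scale $(\E_i\|f\|_X^p)^{(p-1)/p}$ appears in the denominator instead of your $L^1$-scale $(\E_i\|f\|_X)^{p-1}$: for $\{0,1\}$-valued $w$ one has $w=w^{1/s}$ for any $s>0$, and H\"older with $s=\tfrac{2p-1}{p}$ gives $\E_i w_{i,j}\le b_{i,j}^{p/(2p-1)}\bigl(\E_i\|f\|_X^p\bigr)^{(p-1)/(2p-1)}$, whence $b_{i,j}\ge(\kappa-\varepsilon)^{2-1/p}/\bigl(\E_i\|f\|_X^p\bigr)^{(p-1)/p}$. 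Your sentence ``this is what produces the sharper exponent'' hides exactly this choice.
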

\begin{proof}Every time we encounter a fraction that can have $0$ denominator, it will also have $0$ numerator and it is to be treated as $0$. The notation $t^r$ stands for $|t|^r \sgn t$ so that $\frac{\d |t|^p}{\d t}= pt^{p-1}$. For $p=1$ the inequality is elementary, so we assume $p>1$. We may also assume $\varepsilon>0$ and get $\varepsilon=0$ by taking limits. \par
Let $A_{i,j}= \left\{\E_i \p{w_{i,j}\vee\varepsilon}\geq \kappa\right\}$ and $V_{i,j}= \left\{\varphi\in L^1\p{\mathcal{F}_i}:\supp\varphi\subset A_{i,j}\right\}$. We can without loss of generality assume $f_{i,j}\in V_{i,j}$, because otherwise we replace $f_{i,j}$ by $\mathbbm{1}_{A_{i,j}}f_{i,j}$. We are now in the setting of Lemma \ref{gradlemma}, with $\|f\|_X= \left\|\p{\p{w_{i,j}\vee\varepsilon}f_{i,j}}_{i,j}\right\|_{\ell^p(I\times J)}$ and $Y=\p{L^1+L^p}\p{\bigsqcup_i \p{\Omega,\mathcal{F}_i,\mu},\ell^p(J)}$. For a given $f\in V$, we have 
\beq V\cap F=\left\{\varphi:\varphi_{i,j}\text{ is }\mathcal{F}_{i}\text{-measurable and }\supp\varphi_{i,j}\subset \supp f_{i,j}\right\},\eeq
therefore the projection $P_{V\cap F}$ is just
\beq P_{V\cap F}\p{\varphi}=\p{ \E_i \mathbbm{1}_{\supp f_{i,j}} \varphi_{i,j}}_{i,j},\eeq
because the projections onto adapted sequences and onto sequences supported on $\supp f_{i,j}$ are respectively $\E_i$ and multiplication by $\mathbbm{1}_{\supp f_{i,j}}$ applied coordinatewise (in particular they commute). Moreover, $P_{V\cap F}$ is a contraction on $Y^*= \p{L^\infty\cap L^{p'}}\p{\bigsqcup_i \p{\Omega,\mathcal{F}_i,\mu},\ell^{p'}(J)}$. One easily calculates
\beq \nabla\|\cdot\|_X(f)= \p{\frac{ \p{w_{i,j}\vee\varepsilon}^p f_{i,j}^{p-1}}{\left\| \p{w\vee\varepsilon} f\right\|_{\ell^p(I\times J)}^{p-1}}}_{i,j}.\eeq 
By Lemma \ref{gradlemma}, it is enough to prove
\beq \label{eq:wijstillthere}\left\|\bigsqcup_i\p{\E_i \frac{ \p{w_{i,j}\vee\varepsilon}^p f_{i,j}^{p-1}}{\left\| \p{w\vee\varepsilon}f\right\|_{\ell^p(I\times J)}^{p-1}}}_{j\in J}\right\|_{\p{L^\infty\cap L^{p'}}\p{\bigsqcup_i \p{\Omega,\mathcal{F}_i,\mu},\ell^{p'}(J)}}\geq C_{p,\kappa}, \eeq
because $\frac{ \p{w_{i,j}\vee\varepsilon}^p f_{i,j}^{p-1}}{\left\| \p{w\vee\varepsilon}f\right\|_{\ell^p(I\times J)}^{p-1}}$ is already supported on $\supp f_{i,j}$. Let us fix $i,j$ for a moment. On each atom of $\mathcal{F}_i$ contained in $\supp f_{i,j}\subset A_{i,j}$ we have by H\"{o}lder the inequality
\beq \left( \p{ \E_i \frac{ \p{w_{i,j}\vee\varepsilon}^p }{ \|\p{w\vee\varepsilon}f\|_{\ell^p}^{p-1} }} \p{ \E_i \| \p{w\vee\varepsilon}f \|_{\ell^p}^p}^{\frac{p-1}{p}} \right)^{\frac{1}{1+ \frac{p-1}{p}}} \geq \E_i w_{i,j}^{\frac{p^2}{2p-1}}.\eeq
By rearranging the terms and multiplying by $\left|f_{i,j}\right|^{p-1}$, 
\beq \label{eq:aowiufevn}\left|\E_i \frac{ \p{w_{i,j}\vee\varepsilon}^p f_{i,j}^{p-1} }{  \left\|\p{w\vee\varepsilon}f\right\|_{\ell^p}^{p-1}  }\right|\geq \frac{\left|f_{i,j}\right|^{p-1}}{ \p{\E_i\|f\|_{\ell^p}^p}^{\frac{p-1}{p}}} \p{\E_i \p{w_{i,j}\vee\varepsilon}^\frac{p^2}{2p-1}}^{\frac{2p-1}{p}}.\eeq
This inequality has been proved on $\supp f_{i,j}$, but outside of it both sides are $0$, so it is true everywhere. Moreover, on $A_{i,j}$ we have
\beq \p{\E_i \p{w_{i,j}\vee\varepsilon}^\frac{p^2}{2p-1}}^{\frac{2p-1}{p}}= \p{\p{\E_i \p{w_{i,j}\vee\varepsilon}^\frac{p^2}{2p-1}}^{\frac{2p-1}{p^2}}}^p\geq \kappa^p.\eeq
If additionally $w_{i,j}$ is $\{0,1\}$-valued, 
\begin{eqnarray} \p{\E_i \p{w_{i,j}\vee\varepsilon}^\frac{p^2}{2p-1}}^{\frac{2p-1}{p}} &\geq& \p{\E_i w_{i,j}}^{\frac{2p-1}{p}}\\
&\geq & \p{-\varepsilon+\E_i \p{w_{i,j}\vee\varepsilon }}^{\frac{2p-1}{p}}\\
&\geq & \p{\kappa-\varepsilon}^{2-\frac{1}{p}}.
\end{eqnarray}
Plugging bounds for the factor involving $w$ to \eqref{eq:aowiufevn} and then to \eqref{eq:wijstillthere}, we see that it remains to prove
\beq \label{eq:glupieellj}\left\| \bigsqcup_i \p{\frac{\left|f_{i,j}\right|^{p-1}}{ \p{\E_i\|f\|_{\ell^p(I\times J)}^p}^{\frac{p-1}{p}}} }_{j\in J}\right\|_{\p{L^\infty\cap L^{p'}}\p{\ell^{p'}(J)}}\geq 2^{-\frac{1}{p'}}.\eeq
We have
\beq \left\|  \p{ \left|x_j\right|^{p-1} }_{j\in J}\right\|_{\ell^{p'}(J)}=\left\| \p{x_j}_{j\in J}\right\|_{\ell^p(J)}^{p-1},\eeq
so in \eqref{eq:glupieellj} we can replace $J$ with a singleton and write $f_i$ in the place of $\left\|\p{f_{i,j}}_{j\in J}\right\|_{\ell^p(J)}$, which transforms the inequality into
\beq \left\| \bigsqcup_i \p{ \frac{f_{i} }{ \p{\E_i\|f\|_{\ell^p}^p}^{\frac{1}{p}}} }^{p-1}\right\|_{L^\infty\cap L^{p'} }\geq 2^{-\frac{1}{p'}}\eeq
for nonnegative and $\mathcal{F}_i$-measurable $f_i$. Raising both sides to the power $\frac{1}{p-1}$, we end up with
\beq \label{eq:almstwghtdlem} \left\| \bigsqcup_i \frac{f_{i} }{ \p{\E_i\|f\|_{\ell^p}^p}^{\frac{1}{p}}} \right\|_{L^\infty\cap L^p }\geq 2^{-\frac{1}{p}}.\eeq
Suppose 
\beq \left\| \bigsqcup_i \frac{f_{i} }{ \p{\E_i\|f\|_{\ell^p}^p}^{\frac{1}{p}}} \right\|_{L^\infty }<2^{-\frac1p}.\eeq
Then for all $i$,
\beq f_i^p <\frac{1}{2} \E_i \|f\|_{\ell^p}^p = \frac{1}{2}\p{ f_i^p+ \E\sum_{k\neq i}f_k^p},\eeq
so
\beq f_i^p <\E\sum_{k\neq i}f_k^p.\eeq
Ultimately,
\beq \left\| \bigsqcup_i \frac{f_{i} }{ \p{\E_i\|f\|_{\ell^p}^p}^{\frac{1}{p}}} \right\|_{L^p}^p = \sum_i \E \frac{f_i^p}{f_i^p+ \E\sum_{k\neq i}f_k^p}\geq \sum_i \E\frac{f_i^p}{2\E\sum_k f_k^p}=\frac{1}{2}.\eeq
\end{proof}

As a byproduct, we obtain another proof of Theorem \ref{JSch}, which we present for the sake of completeness.\par
\textit{Proof of Theorem \ref{JSch}.} For a fixed $n$, both sides of \eqref{eq:JSchineq} are norms of the vector-valued function $\left(f_{i}\right)_{i=1,\ldots,n}$, dominated by $\sum_{i}\left\|f_{i}\right\|_{L^1\left(\Omega\right)}$. We can assume that $f_i$'s attain finitely many values due to density of such $\p{f_i}_{i\in I}$ in $\bigoplus_{i}L^1\left(\Omega\right)$. Let $\mathcal{F}$ be the sigma-algebra generated by $f_1,\ldots,f_n$, or equivalently by the partition of $\Omega$ into intersections of their level sets. We can restrict the infimum on the right side of \eqref{eq:JSchineq} to $g_i,h_i$ being $\mathcal{F}$-measurable, because for any decomposition $f_i= g_i+h_i$, the decomposition $f_i= \E\p{g_i\mid\mathcal{F}}+ \E\p{h_i\mid\mathcal{F}}$ produces a smaller result. Thus we can think of $\Omega$ being a finite space, the atoms being elements of the partition. Since both sides of \eqref{eq:JSchineq} are continuous in $\mu$, we can assume that the atoms have measures being multiples of $\frac{1}{N}$ for some $N\in\mathbb{N}$. This enables us to split each of them into atoms of size $\frac{1}{N}$, getting a new probability space $\tilde\Omega$ equipped with the normalized counting measure and a new sigma-algebra $\tilde{\mathcal{F}}$, with respect to which $f_i$'s are still measurable. By the same argument as before, we can extend the infimum to decompositions measurable with respect to $\tilde{\mathcal{F}}$. Therefore, we can assume without loss of generality that $\Omega $ is finite with normalized counting measure.\par
The $\lesssim$ inequality of \eqref{eq:JSchineq} is elementary and holds with constant $1$, because for any decomposition $f_i=g_i+h_i$ we have \begin{eqnarray*} \int_{\Omega^n}\p{\sum_i \left|f_i\p{x_i}\right|^p}^\frac1p\d x&\leq & \int_{\Omega^n}\p{\sum_i \left|g_i\p{x_i}\right|^p}^\frac1p\d x + \int_{\Omega^n}\p{\sum_i \left|h_i\p{x_i}\right|^p}^\frac1p\d x\\ &\leq& \int_{\Omega^n}\sum_i\left|g_i\p{x_i}\right|\d x + \p{\int_{\Omega^n}\sum_i \left|h_i\p{x_i}\right|^p\d x}^\frac1p\\ &=& \sum_i\int_{\Omega}\left|g_i\p{x_i}\right|\d x_i + \p{\sum_i\int_{\Omega} \left|h_i\p{x_i}\right|^p\d x_i}^\frac1p.\end{eqnarray*}
The other inequality, is precisely Lemma \ref{premainlemma} with $J$ being a singleton, $I=\{1,\ldots,n\}$, $w\equiv 1$, $\kappa=1$ and $\varepsilon=0$. It remains to prove that we can choose the desired decomposition so that the summands have disjoint supports. Without loss of generality, $f=\bigsqcup_i f_i\geq 0$. Let $f=g+h$ be some decomposition satisfying $\int_{\bigsqcup_i \Omega} g+ \p{\int_{\bigsqcup_i \Omega} h^p}^\frac{1}{p}\leq 1$. Then for any $x\in\bigsqcup\Omega$ we have either $g(x)\geq \frac12 f(x)$ or $h(x)\geq \frac12 f(x)$. In the former case we put $\tilde{g}(x)=f(x)$, $\tilde{h}(x)=0$ and in the latter $\tilde{g}(x)=0$, $\tilde{h}(x)=f(x)$, choosing arbitrarily if $g(x)=h(x)$. This way we have $\tilde{g}+\tilde{h}=f$ and $\|\tilde{g}\|_{L^1}\leq 2\|g\|_{L^1}$ and $\|\tilde{h}\|_{L^p}\leq 2\|h\|_{L^p}$. \par
In the original version of Theorem \ref{JSch}, the decomposition was defined in terms of decreasing rearrangement of $f$. There is also another way of constructing a more explicit decomposition. 
Define a function $\Phi:\mathbb{R}_+\to\mathbb{R}_+$ by 
\[\Phi(t)= \left\{\begin{matrix}t^p&\text{ if }&0\leq t\leq 1\\ pt-(p-1)&\text{ if }&t\geq 1\end{matrix}\right.\] and take 
\beq \label{eq:sliceatlevel}\alpha=\mathbbm{1}_{\{f\geq 1\}}f, \quad \beta= \mathbbm{1}_{\{f<1\}}f.\eeq
By convexity of $\Phi$ and the inequality $\Phi(t)\leq \min\p{t^p, pt}$, 
\begin{eqnarray*} \int_{\coprod_i\Omega}\p{\alpha +\beta^p}&\leq& \int_{f\geq 1}((pf-(p-1))+\int_{f<1} f^p\\ &=&\int_{\coprod_i\Omega}\Phi(f)\\ &=&\int_{\coprod_i\Omega}\Phi\p{g+h}\\&\leq&\frac12\int_{\coprod_i\Omega}\p{\Phi(2g)+\Phi(2h)}\\ &\lesssim_p& \int_{\coprod_i\Omega}g+\int_{\coprod_i\Omega}h^p\\ &\leq& 2 \end{eqnarray*}
as desired. \qed

\section{Decomposition theorem for  $\V^p_m\p{\Omega^\infty}$}\label{secdecomp}
We would like to extend Theorem \ref{premainlemma} to moments of $m$-th order $U$-statistics, and as a result extend Corollary \ref{1varmzmz} to treat $f\in \sum_{m=0}^M \V_m^p= \V_{\leq M}^p$. Let us note the following theorem due to Bourgain \cite{bourgwalsh}, which generalizes Marcinkiewicz-Zygmund inequality and in particular implies that $U^p_{m}$ is complemented in $U^p_{\leq M}$ for $m\leq M$. 
\begin{thm}[Bourgain]\label{bourgsqfn}Let $f\in \V_{\leq M}^p$. Then 
\[ \|f\|_{L^p\p{\Omega^n}}\simeq_{M,p} \left\|\sqrt{\sum_{|A|\leq M} \left|P_A f\right|^2}\right\|_{L^p\p{\Omega^n}}.\] \end{thm}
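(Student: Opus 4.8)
\emph{Proof proposal.} The plan is to reduce, via decoupling, to a \emph{decoupled} canonical $U$-statistic, and then prove the square-function equivalence for such statistics by induction on the degree $M$, with the (vector-valued) Marcinkiewicz--Zygmund inequality of Lemma~\ref{marzyglp} as the one-step engine. The point of decoupling is that it turns the top-degree part of the chaos into the \emph{only} summand that depends on the $M$-th independent copy of $\Omega^n$, and this is precisely what lets an induction on $M$ proceed with constants independent of the number $n$ of variables. A naive argument peeling off one variable $x_n$ at a time does not work: $\E^{(n)}f$ is again of degree $M$ (now in $n-1$ variables), and one loses a multiplicative constant at each of the $\sim n$ peels, so the constant would blow up.

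\emph{Step 1 (decoupling).} Write $f=\sum_{m=0}^M\sum_{|A|=m}P_Af$; by \eqref{eq:pdef} each $P_Af$ is mean zero in each of its $|A|$ arguments, i.e.\ a completely degenerate kernel. Let $x^{(1)},\dots,x^{(M)}$ be independent copies of $(x_1,\dots,x_n)$. Applying a decoupling inequality for canonical $U$-statistics, degree by degree (with constant depending only on $M$ and $p$; for $0<p<1$ the corresponding $L^p$-version, cf.\ the nonnegative Corollary~\ref{multizinn}), both to the scalar kernels $P_Af$ and to the $\ell^2$-valued family $(P_Af)_{|A|=m}$, gives
\[\|f\|_{L^p(\Omega^n)}\simeq_{M,p}\Bigl\|\sum_{m=0}^M\sum_{i_1<\dots<i_m}(P_{\{i_1,\dots,i_m\}}f)(x^{(1)}_{i_1},\dots,x^{(m)}_{i_m})\Bigr\|_{L^p}\]
and, analogously for the square function,
\[\Bigl\|\Bigl(\sum_{|A|\le M}|P_Af|^2\Bigr)^{1/2}\Bigr\|_{L^p(\Omega^n)}\simeq_{M,p}\Bigl\|\Bigl(\sum_{m=0}^M\sum_{i_1<\dots<i_m}\bigl|P_{\{i_1,\dots,i_m\}}f(x^{(1)}_{i_1},\dots,x^{(m)}_{i_m})\bigr|^2\Bigr)^{1/2}\Bigr\|_{L^p}.\]
Hence it suffices to prove the square-function equivalence for decoupled canonical $U$-statistics $\tilde f=\sum_{m\le M}\sum_{i_1<\dots<i_m}h^{(m)}_{i_1,\dots,i_m}(x^{(1)}_{i_1},\dots,x^{(m)}_{i_m})$ (with $h^{(m)}$ mean zero in each argument), and we prove it for $H$-valued $\tilde f$ with $H$ an arbitrary Hilbert space, since the induction will need this.

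\emph{Step 2 (induction on $M$).} For $M=0$ it is trivial; for $M=1$ it is Lemma~\ref{marzyglp} applied to the independent mean-zero summands $h^{(1)}_i(x^{(1)}_i)$, together with the elementary fact $\|c+\sum X_i\|_p\simeq_p\|(|c|^2+\sum|X_i|^2)^{1/2}\|_p$ for $c$ deterministic and $X_i$ independent, mean zero (conditioning plus the triangle inequality, as in the proof of Lemma~\ref{marzyglp}). For the step write $\tilde f=A+B$, where $B$ collects the degree-$M$ terms — the only ones involving $x^{(M)}$ — and $A$, of degree $\le M-1$, is independent of $x^{(M)}$. Conditionally on $x^{(1)},\dots,x^{(M-1)}$, $B=\sum_{i_M}g_{i_M}(x^{(M)}_{i_M})$ is a degree-one canonical $U$-statistic in $x^{(M)}$ with Hilbert-valued coefficients $g_{i_M}=\sum_{i_1<\dots<i_{M-1}<i_M}h^{(M)}_{i_1,\dots,i_{M-1},i_M}(x^{(1)}_{i_1},\dots,x^{(M-1)}_{i_{M-1}},\cdot)$, so one application of (vector-valued) Marcinkiewicz--Zygmund gives $\|\tilde f\|_p\simeq_p\bigl\|(|A|^2+\sum_{i_M}|g_{i_M}(x^{(M)}_{i_M})|^2)^{1/2}\bigr\|_p$. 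Now apply the inductive hypothesis for degree $M-1$ twice — the scalar version to $A$, and, for each fixed $x^{(M)}$, the $\ell^2(\{i_M\})$-valued version to the degree-$(M-1)$ decoupled $U$-statistic $(g_{i_M})_{i_M}$ in $x^{(1)},\dots,x^{(M-1)}$ — and integrate in $x^{(M)}$; this replaces $\sum_{i_M}|g_{i_M}(x^{(M)}_{i_M})|^2$ by $\sum_{i_1<\dots<i_M}|h^{(M)}_{i_1,\dots,i_M}(x^{(1)}_{i_1},\dots,x^{(M)}_{i_M})|^2$ and $|A|^2$ by the square function of the degree-$\le M-1$ kernels. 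Reassembling (using $\|(a^2+b^2)^{1/2}\|_p\simeq\|a\|_p+\|b\|_p$ for nonnegative $a,b$) yields the equivalence for $\tilde f$; the constant has picked up only a bounded factor depending on $M,p$ at this single step, so $C_M\le c_p\,C_{M-1}$ and no $n$-dependence accumulates.

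\emph{Main obstacle.} The crux is Step~1: one needs decoupling for canonical $U$-statistics with constant independent of $n$ — and in Hilbert-valued form, and (for the full range of $p$) in the $0<p<1$ version — which is a nontrivial theorem in its own right and is exactly what the variable-by-variable induction cannot supply; one may cite the known results or, alternatively, derive what is needed from Corollary~\ref{multizinn} by a symmetrization. The remaining points are bookkeeping: one must check that ``fixing $x_n$'' commutes appropriately with the Hoeffding projections, so that sections of $f$ have the expected Hoeffding decomposition — immediate from \eqref{eq:pdef} — and that combining the $M+1$ degrees at the various stages is harmless, which it is because $M$ is fixed.
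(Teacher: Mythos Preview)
Note first that the paper does not give its own proof of this statement; it is quoted as a theorem of Bourgain \cite{bourgwalsh}. So there is no in-paper proof to compare against. Bourgain's original argument proceeds rather differently --- by a direct iteration with Rademacher randomization over the coordinates, not via decoupled $U$-statistics --- so your route is genuinely different in spirit.

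Your Step~2 is correct and is a clean way to obtain the square-function equivalence on the decoupled side; the observation that in the decoupled sum only the top-degree block sees $x^{(M)}$ is precisely what makes an induction on $M$ (rather than on $n$) run with $n$-free constants.

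The weak point is Step~1 as you have phrased it. Applying de~la~Pe\~na--Montgomery-Smith ``degree by degree'' yields $\|f_m\|_p\simeq\|\tilde f_m\|_p$ for each fixed $m$, but to assemble these into $\bigl\|\sum_{m\le M} f_m\bigr\|_p\simeq\bigl\|\sum_{m\le M}\tilde f_m\bigr\|_p$ you would need $\|f\|_p\simeq\sum_m\|f_m\|_p$, i.e.\ boundedness of $P_m$ on $\V^p_{\le M}$ --- which is exactly a corollary of the theorem you are trying to prove. So as written there is a circularity. The fix is to decouple not by \emph{degree} but by \emph{position}, mimicking the proof of Corollary~\ref{multizinn}: the sequence $g_k:=\sum_{\max A=k}P_Af$ is a martingale difference sequence in the natural filtration, and one application of a \emph{signed} tangent-sequence decoupling (Zinn/Hitczenko for martingale differences, valid for $p\ge1$) replaces the last-index variable in every block simultaneously; iterating $M$ times gives the fully decoupled sum with constant depending only on $M,p$. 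Your suggestion that the nonnegative Corollary~\ref{multizinn} ``plus a symmetrization'' suffices is not quite right --- it is the martingale analogue of Zinn that is needed here, and that is an additional (though known) input. With this correction your argument goes through for $1\le p<\infty$.
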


\begin{cor}\label{sqfndecoup}Let $f\in \V_{\leq M}^p$ and $P_{{i_1},\ldots,i_m}f\p{x}=f_{i_1,\ldots,i_m}\p{x_{i_1},\ldots,x_{i_m}}$. Then
\[ \|f\|_{L^p\p{\Omega^n}}^p\simeq_{M,p} \sum_{m\leq M} \overbrace{\int_{\Omega^n}\cdots \int_{\Omega^n}}^m \p{\sum_{1\leq i_1<\ldots<i_m\leq n} f_{i_1,\ldots,i_m} \p{x^{(1)}_{i_1},\ldots,x^{(m)}_{i_{m}}}^2}^\frac{p}{2}\d x^{(1)}\ldots \d x^{(m)}.\] \end{cor}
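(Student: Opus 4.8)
The plan is to combine Bourgain's square function equivalence (Theorem \ref{bourgsqfn}) with the multivariate decoupling result (Corollary \ref{multizinn}), the only subtlety being bookkeeping of the exponent. First I would invoke Theorem \ref{bourgsqfn} to write
\[
\|f\|_{L^p(\Omega^n)}^p \simeq_{M,p} \int_{\Omega^n}\Bigl(\sum_{|A|\leq M}\left|P_A f\right|^2\Bigr)^{\frac p2}\d x
 = \int_{\Omega^n}\Bigl(\sum_{m\leq M}\ \sum_{|A|=m}\left|P_A f\right|^2\Bigr)^{\frac p2}\d x .
\]
Since $p\le 2$ (and in any case the quasi-norm inequality $(\sum_m a_m)^{p/2}\simeq_M \sum_m a_m^{p/2}$ holds with a constant depending only on $M$ when summing at most $M+1$ nonnegative terms), this is comparable, up to a constant depending on $M$ and $p$, to
\[
\sum_{m\leq M}\ \int_{\Omega^n}\Bigl(\sum_{1\le i_1<\dots<i_m\le n}\left|f_{i_1,\dots,i_m}(x_{i_1},\dots,x_{i_m})\right|^2\Bigr)^{\frac p2}\d x ,
\]
using the notation $P_{i_1,\dots,i_m}f(x)=f_{i_1,\dots,i_m}(x_{i_1},\dots,x_{i_m})$ from the statement.

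Next I would treat each term of the outer sum separately. Fix $m\le M$ and set $q=\frac p2\in(0,1]$; for each multi-index $i=(i_1,\dots,i_m)$ with $i_1<\dots<i_m$ put $\varphi_i = \left|f_{i_1,\dots,i_m}\right|^2$, a nonnegative $\mathcal F_{\{i_1,\dots,i_m\}}$-measurable function. Then the $m$-th integral above is exactly
\[
\int_{\Omega^n}\Bigl(\sum_{i} \varphi_i(x_{i_1},\dots,x_{i_m})\Bigr)^{q}\d x ,
\]
to which Corollary \ref{multizinn} applies verbatim (replacing $\Omega^\infty$ by $\Omega^n$ is harmless, since all sums are finite and we may pad with extra coordinates, or simply rerun the proof of Corollary \ref{multizinn} with $\N$ replaced by $[1,n]$). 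This yields, with a constant depending only on $m$ (hence on $M$),
\[
\int_{\Omega^n}\Bigl(\sum_{i}\varphi_i\Bigr)^{q}\d x \simeq_m \int_{(\Omega^n)^m}\Bigl(\sum_{i}\varphi_i\bigl(y^{(1)}_{i_1},\dots,y^{(m)}_{i_m}\bigr)\Bigr)^{q}\d y^{(1)}\cdots\d y^{(m)}.
\]
Unwinding $\varphi_i=|f_{i_1,\dots,i_m}|^2$ and $q=\frac p2$ turns the right-hand side into
\[
\int_{\Omega^n}\cdots\int_{\Omega^n}\Bigl(\sum_{1\le i_1<\dots<i_m\le n} f_{i_1,\dots,i_m}\bigl(x^{(1)}_{i_1},\dots,x^{(m)}_{i_m}\bigr)^2\Bigr)^{\frac p2}\d x^{(1)}\cdots\d x^{(m)},
\]
which is precisely the $m$-th summand on the right side of the claimed equivalence. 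Summing over $m\le M$ and chaining the comparisons finishes the proof.

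I do not expect a genuine obstacle here: all the work has been done in Theorem \ref{bourgsqfn} and Corollary \ref{multizinn}, and what remains is the elementary observation that for fixed order $m$ the square function over the $\binom{n}{m}$ Hoeffding components $P_A f$ with $|A|=m$ is literally a sum of the form $\bigl(\sum_i \varphi_i(x_{i_1},\dots,x_{i_m})\bigr)^{1/2}$ with $\varphi_i=|f_i|^2$, so that decoupling applies on the nose. The only point requiring a word of care is the passage $(\sum_{m\le M} a_m)^{p/2}\simeq \sum_{m\le M} a_m^{p/2}$, valid because the number of summands is bounded by $M+1$ and $0<p/2\le 1$; this is where the constant's dependence on $M$ (and implicitly on $p$ through Theorem \ref{bourgsqfn}) enters. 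Everything else is a substitution of notation.
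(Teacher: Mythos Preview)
Your proposal is correct and follows exactly the route the paper takes: its proof is the one-liner ``This is just a combination of Theorem \ref{bourgsqfn} and Corollary \ref{multizinn},'' and you have simply filled in the bookkeeping (splitting the square function by level $m$, using $(\sum_{m\le M} a_m)^{p/2}\simeq_M\sum_{m\le M}a_m^{p/2}$ for a bounded number of terms, and substituting $q=p/2$, $\varphi_i=|f_i|^2$ into Corollary \ref{multizinn}). Your observation that Corollary \ref{multizinn} needs $q=p/2\le 1$, hence $p\le 2$, is an implicit hypothesis the paper leaves unstated but relies on throughout the subsequent applications.
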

\begin{proof}This is just a combination of Theorem \ref{bourgsqfn} and Corollary \ref{multizinn}. \end{proof}
Again, for $p\geq 2$ this has been handled by multivariate extension of Rosenthal inequality, see \cite{GLZ}. For $1\leq p\leq 2$, by calling $\left|f_i\right|^p$ the new $f_i$ and $\frac{2}{p}$ the new $p$, we arrive at expressions of the form 
\beq \label{eq:moooi}\overbrace{\int_{\Omega^n}\cdots \int_{\Omega^n}}^m \p{\sum_{1\leq i_1<\ldots<i_m\leq n} f_{i_1,\ldots,i_m} \p{x^{(1)}_{i_1},\ldots,x^{(m)}_{i_{m}}}^p}^{\frac{1}{p}}\d x^{(1)}\ldots \d x^{(m)},\eeq
which the main object of our interest will be. For this, we can relax the condition on $p$ in \eqref{eq:moooi} to $1\leq p<\infty$ and the sum may be run over $i\in [1,n]^m$ instead of $1\leq i_1<\ldots<i_m\leq n$. 
\subsection{The case $m=2$} We are going to single out the case $m=2$, because we believe that it will significantly improve the legibility of the proof. 
\begin{thm}\label{4summand}Let $f_{i,j}\in L^1\left(\Omega^2\right)$ for $i,j=1,\ldots,n$ and $1\leq p<\infty$. Then
\begin{eqnarray}\label{eq:4sumequiv}\int_{\Omega^n}\int_{\Omega^n}\left(\sum_{i,j}\left|f_{i,j}\left(x_i,y_j\right)\right|^p\right)^\frac{1}{p} \d x\d y\simeq \inf_{\coprod_{i,j}f_{i,j}=a+b+c+d} &\|a\|_{L^1\left(\coprod_i \Omega \times \coprod_j \Omega\right)}\\ \nonumber +&\|b\|_{L^p\left(\coprod_i \Omega \times \coprod_j \Omega\right)}\\ \nonumber + &\|c\|_{L^1\left(\coprod_i \Omega,L^p\left(\coprod_j \Omega\right)\right)}\\ \nonumber+ &\|d\|_{L^1\left(\coprod_j \Omega,L^p\left(\coprod_i \Omega\right)\right)}\end{eqnarray}
with a constant not dependent on $p$. In more explicit terms, the inequality `$\gtrsim$' means that if \beq \label{eq:lesssim1}\int_{\Omega^n}\int_{\Omega^n}\left(\sum_{i,j}\left|f_{i,j}\left(x_i,y_j\right)\right|^p\right)^\frac{1}{p} \d x\d y\leq 1,\eeq then there is a decomposition \beq \label{eq:decompabcd}f_{i,j}=a_{i,j}+b_{i,j}+c_{i,j}+d_{i,j}\eeq such that 
\beq \label{eq:4suma}\sum_{i,j}\int_{\Omega}\int_{\Omega}\left|a_{i,j}\left(\xi ,\upsilon \right)\right| \d \xi \d \upsilon\lesssim 1,\eeq
\beq \label{eq:4sumb}\p{\sum_{i,j}\int_{\Omega}\int_{\Omega}\left|b_{i,j}\left(\xi ,\upsilon \right)\right|^p\d \xi \d \upsilon }^\frac{1}{p}\lesssim 1,\eeq
\beq \label{eq:4sumc}\sum_i \int_{\Omega}\left(\sum_j \int_{\Omega}\left|c_{i,j}\left(\xi ,\upsilon \right)\right|^p\d \upsilon \right)^\frac{1}{p} \d \xi \lesssim 1,\eeq
\beq \label{eq:4sumd}\sum_j \int_{\Omega}\left(\sum_i \int_{\Omega}\left|d_{i,j}\left(\xi ,\upsilon \right)\right|^p\d \xi \right)^\frac{1}{p} \d \upsilon \lesssim 1.\eeq 
Moreover, it can be chosen in such a way that for any $i,j$, supports of $a_{i,j},b_{i,j},c_{i,j},d_{i,j}$ are disjoint.\end{thm}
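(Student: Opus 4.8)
The plan is to bootstrap the one-variable Theorem~\ref{JSch} to two variables. As in the proof of Theorem~\ref{JSch}, one first reduces to the case of a finite $\Omega$ with normalized counting measure, using density of ``simple'' $f_{i,j}$ and the fact that passing to a conditional expectation only decreases the four-term infimum; from now on all algebras are finite. The direction $\lesssim$ is then elementary, with constant $1$: for any decomposition $f_{i,j}=a_{i,j}+b_{i,j}+c_{i,j}+d_{i,j}$, the triangle inequality in $\ell^p$ over the pairs $(i,j)$ followed by integration in $x$ and $y$ bounds the left-hand side by the sum of the four corresponding quantities, using $\|\cdot\|_{\ell^p}\le\|\cdot\|_{\ell^1}$ for the $a$-term, Jensen ($\|\cdot\|_{L^1}\le\|\cdot\|_{L^p}$ on a probability space) for the $b$-term, and these two estimates in the two different groups of variables for the $c$- and $d$-terms, together with Fubini; one then takes the infimum.

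For $\gtrsim$ I would assume all $f_{i,j}\ge0$, normalize the left-hand side of \eqref{eq:4sumequiv} to be $\le1$, and apply the one-variable inequality twice --- once in the block $x=(x_i)$ and once in the block $y=(y_j)$. Writing the double integral as an iterated one and freezing $x$, the inner integral in $y$ is a quantity estimated from below by the vector-valued form of Theorem~\ref{premainlemma} (trivial weight $w\equiv1$, with the $i$-index supplying the $\ell^p$-valued component); this gives, for each frozen $x$, a splitting of the $y$-behaviour of $f_{i,j}$ into an ``$L^1$ in $y$'' piece $g$ and an ``$L^p$ in $y$'' piece $h$ whose $y$-norms, integrated over $x$, are $\lesssim1$. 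One then processes the $x$-block. The piece $g$ depends on $x$ in an $L^1$-type fashion, so a second application of Theorem~\ref{premainlemma} (again with trivial weight) in the $x$-variables splits it into $a_{i,j}$ ($L^1$ in $x$, $L^1$ in $y$) and $d_{i,j}$ ($L^p$ in $x$, $L^1$ in $y$); the piece $h$, however, is aggregated in $L^p$ --- not $L^1$ --- over the index $i$, which is exactly where a naive iteration fails, and here one must invoke the \emph{weighted} Theorem~\ref{premainlemma} in the $x$-variables, with a $\{0,1\}$-valued weight $w_{i,j}$ recording the cut made in the first step, to split $h$ into $b_{i,j}$ ($L^p$ in all variables) and $c_{i,j}$ ($L^1$ in $x$, $L^p$ in $y$). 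Because the relevant weights are $\{0,1\}$-valued and the admissible $\kappa$ is a fixed constant (dictated by the cut, not by $p$), the factor $C_{p,\kappa}=(\kappa-\varepsilon)^{2-1/p}2^{-1/p'}$ in Theorem~\ref{premainlemma} stays bounded below uniformly in $p$, so the final constant does not depend on $p$. The disjoint-support claim is obtained at the end by the pointwise ``keep only the largest of the four values'' device, as in the proof of Theorem~\ref{JSch}, losing a universal factor.

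The crux --- and the reason Theorem~\ref{premainlemma} was proved in weighted form in Section~\ref{secweighted} --- is the treatment of $h$ in the second pass. After the first ($y$-)application the splitting of $f_{i,j}$ need not be adapted to the single variable $x_i$ (a priori it depends on the whole vector $x$ and on the first-step cut), so the $x$-integral of the ``$L^p$'' piece is not literally a one-variable Johnson--Schechtman quantity and cannot be iterated directly. Replacing the non-adapted cut by an adapted $\{0,1\}$-valued one introduces the weight $w_{i,j}$, and the weighted inequality is precisely what recovers a genuine $x$-decomposition of $f_{i,j}$ from the truncated data $w_{i,j}f_{i,j}$, provided $\E_i(w_{i,j})$ stays bounded below by a universal $\kappa$ on the atoms that matter, up to a controlled error. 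Carrying out that verification, together with the bookkeeping needed to glue the per-$x$ decompositions into single functions $a_{i,j},b_{i,j},c_{i,j},d_{i,j}$, is the main technical work.
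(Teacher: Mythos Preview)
Your outline is essentially the paper's proof with the roles of $x$ and $y$ exchanged, and you have correctly located the central obstruction --- the first-step cut is not adapted to a single coordinate of the remaining block --- together with its cure via the weighted Theorem~\ref{premainlemma}. There is, however, one slip: the weighted inequality is needed in the second pass for \emph{both} pieces, not only for $h$. After your $y$-split at frozen $x$, the cut $w_j(x,\upsilon)\in\{0,1\}$ depends on the whole vector $x$; hence after Fubini the $g$-bound reads
\[
\sum_j\int_\Omega\d\upsilon\int_{\Omega^n}\d x\;w_j(x,\upsilon)\Bigl(\sum_i f_{i,j}(x_i,\upsilon)^p\Bigr)^{1/p}\lesssim1,
\]
and for fixed $(j,\upsilon)$ the inner $x$-integral is \emph{not} an unweighted Johnson--Schechtman quantity: the factor $w_j(x,\upsilon)$ is not $\mathcal F_i$-measurable, and simply discarding it via $w_j\le1$ destroys the bound (one would be left with the full $L^1(\coprod_j\Omega)$ norm of the undecomposed $f_{i,j}$). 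Exactly as you describe for $h$, one must first apply Theorem~\ref{premainlemma} in $x$ with the nontrivial weight $w_j$, replacing it by the adapted indicator $W_{i,j}(\xi,\upsilon)=\mathbbm{1}_{\{(\E_i w_j)(\cdot,\upsilon)\ge\frac12\}}(\xi)$, and only then split the now $\mathcal F_i$-measurable functions $W_{i,j}f_{i,j}$ by the unweighted Theorem~\ref{JSch} into $a_{i,j}$ and $d_{i,j}$. This is what the paper does (with $x$ and $y$ interchanged) for both halves; with this single amendment your argument coincides with the paper's.
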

From this, we immediately derive
\begin{cor}\label{4summandmz}For $1\leq p\leq 2$ and $f\in U^p_2\p{\Omega^n}$ such that $f(x)=\sum_{i<j} f_{i,j}\p{x_i,x_j}$, 
\begin{align} \left\|f\right\|_{L^p\p{\Omega^n}}\simeq \inf_{f_{i,j}=a_{i,j}+b_{i,j}+c_{i,j}+d_{i,j}}& \p{\sum_{i<j}\int_{\Omega^2}\left|a_{i,j}\p{\xi,\upsilon}\right|^2\d\xi\d \upsilon}^\frac{1}{2}\\ 
+& \p{\sum_{i<j}\int_{\Omega^2}\left|b_{i,j}\p{\xi,\upsilon}\right|^p\d\xi\d \upsilon}^\frac{1}{p}\\
+& \p{ \sum_i\int_{\Omega}\p{\sum_{j>i}\int_{\Omega^2}\left|c_{i,j}\p{\xi,\upsilon}\right|^2\d \upsilon}^\frac{p}{2}\d\xi}^\frac{1}{p}\\
+& \p{ \sum_j\int_{\Omega}\p{\sum_{i<j}\int_{\Omega^2}\left|d_{i,j}\p{\xi,\upsilon}\right|^2\d\xi}^\frac{p}{2}\d \upsilon}^\frac{1}{p}.\end{align}
Moreover, the decomposition can be chosen such that $a_{i,j},b_{i,j},c_{i,j},d_{i,j}$ are mean zero in each variable. 
\end{cor}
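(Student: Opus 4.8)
The plan is to deduce this from Theorem \ref{4summand} by a change of exponent, using Bourgain's square function theorem to pass from the $L^p\p{\Omega^n}$ norm to a decoupled $L^p\p{\ell^2}$ quantity of the kind appearing on the left of \eqref{eq:4sumequiv}. Since $f\in U^p_2\p{\Omega^n}$ and $f(x)=\sum_{i<j}f_{i,j}\p{x_i,x_j}$ with each $f_{i,j}$ mean zero in each variable, $P_{\{i,j\}}f=f_{i,j}$ and $P_Af=0$ for $|A|\neq2$, so Corollary \ref{sqfndecoup} with $M=2$ gives
\beq \n{f}_{L^p\p{\Omega^n}}^p\simeq_p T:=\int_{\Omega^n}\int_{\Omega^n}\p{\sum_{i<j}\left|f_{i,j}\p{x_i,y_j}\right|^2}^\frac{p}{2}\d x\d y.\eeq
Setting $q:=\tfrac2p\in[1,2]$ and $F_{i,j}:=\left|f_{i,j}\right|^p\geq0$, one has $\sum_{i<j}\left|f_{i,j}\p{x_i,y_j}\right|^2=\sum_{i<j}F_{i,j}\p{x_i,y_j}^q$, so $T$ is precisely the left-hand side of \eqref{eq:4sumequiv} for the nonnegative functions $F_{i,j}$ and the exponent $q$. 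The proof then amounts to translating the two inequalities of Theorem \ref{4summand} for the $F_{i,j}$ into statements about $f_{i,j}$: if a piece $g$ satisfies $\left|g\right|^p=A$ (where $A$ is one of the four summands), then $\int\left|g\right|^p=\int A$ while $\bigl(\int\left|g\right|^2\bigr)^{p/2}=\bigl(\int A^q\bigr)^{1/q}$ because $pq=2$; consequently the $L^1$-piece of Theorem \ref{4summand} corresponds to the $L^p$-summand `$b$' of the corollary, the $L^q$-piece to the $L^2$-summand `$a$', and the two mixed pieces to the two mixed summands `$c$', `$d$'.

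For `$\gtrsim$': the `$\gtrsim$' half of Theorem \ref{4summand}, applied to $F_{i,j}/T$ and then rescaled by $T$, yields a decomposition $F_{i,j}=A_{i,j}+B_{i,j}+C_{i,j}+D_{i,j}$ with pairwise disjoint supports whose four norms \eqref{eq:4suma}--\eqref{eq:4sumd} (for the $F$'s, exponent $q$) all satisfy $\lesssim T\simeq_p\n{f}_{L^p\p{\Omega^n}}^p$. By disjointness I would put $a_{i,j}:=\mathbbm{1}_{\supp B_{i,j}}f_{i,j}$, $b_{i,j}:=\mathbbm{1}_{\supp A_{i,j}}f_{i,j}$ (note the swap dictated by the correspondence above), and similarly $c_{i,j},d_{i,j}$; then $f_{i,j}=a_{i,j}+b_{i,j}+c_{i,j}+d_{i,j}$, the supports are disjoint, $\left|b_{i,j}\right|^p=A_{i,j}$, $\left|a_{i,j}\right|^p=B_{i,j}$, and so on, and raising the four norm bounds to the power $\tfrac1p$ (using $pq=2$) turns them into the four summands of the corollary, all $\lesssim_p\n{f}_{L^p}$. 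To make the pieces mean zero in each variable, I would finally replace each of $a_{i,j},b_{i,j},c_{i,j},d_{i,j}$ by its image under $\p{\id-\E_i}\p{\id-\E_j}$: this keeps the sum equal to $f_{i,j}$ (since $f_{i,j}$ is already mean zero in each variable) and multiplies each of the four summands by at most a universal constant, because $\id-\E$ is an $L^2$-contraction of norm $\leq2$ on every $L^r$, $r\geq1$, and these bounds pass coordinatewise through the $\ell^2$- and mixed-norm structures (cf. Lemma \ref{rbdd}).

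For `$\lesssim$': given any decomposition $f_{i,j}=a_{i,j}+b_{i,j}+c_{i,j}+d_{i,j}$ I would split $F_{i,j}=\left|f_{i,j}\right|^p$ proportionally, i.e. with $S_{i,j}:=\left|a_{i,j}\right|^p+\left|b_{i,j}\right|^p+\left|c_{i,j}\right|^p+\left|d_{i,j}\right|^p$ set $A'_{i,j}:=\tfrac{\left|f_{i,j}\right|^p}{S_{i,j}}\left|a_{i,j}\right|^p$ (with $\tfrac{0}{0}:=0$) and likewise $B'_{i,j},C'_{i,j},D'_{i,j}$. Then $A'_{i,j}+B'_{i,j}+C'_{i,j}+D'_{i,j}=F_{i,j}$, and since $1\leq p\leq2$ gives $\left|f_{i,j}\right|^p\leq4^{p-1}S_{i,j}\leq4S_{i,j}$, we get $A'_{i,j}\leq4\left|a_{i,j}\right|^p$, $B'_{i,j}\leq4\left|b_{i,j}\right|^p$, and so on, pointwise. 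Applying the `$\lesssim$' half of Theorem \ref{4summand} to the $F_{i,j}$ with exponent $q$, estimating the four norms of $A',B',C',D'$ by $4$ times the $p$-th powers of the corollary's four summands, and combining with $\n{f}_{L^p}^p\simeq_pT$ gives `$\lesssim$'. In this argument the only genuinely non-cosmetic step is checking that the projection onto mean-zero subspaces in the `$\gtrsim$' direction costs only a universal constant; the rest is exponent juggling between $p$ and $q=2/p$ and the identification of the four pieces. The constants produced by Theorem \ref{4summand} are independent of $p$, while the constant from Corollary \ref{sqfndecoup} in the initial reduction depends on $p$, which is why the corollary only asserts an equivalence with an unspecified constant.
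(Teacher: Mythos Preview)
Your proof is correct and follows essentially the same route as the paper: reduce via Corollary~\ref{sqfndecoup} to the decoupled square function, then apply Theorem~\ref{4summand} with exponent $q=2/p$ to $|f_{i,j}|^p$, and finally project the pieces by $(\id-\E)\otimes(\id-\E)$ to restore the mean-zero condition. The paper compresses all of this into the phrase ``routine convexification argument'' and does not spell out the `$\lesssim$' direction at all; your proportional splitting $A'_{i,j}=\frac{|f_{i,j}|^p}{S_{i,j}}|a_{i,j}|^p$ is a valid way to fill that in (an alternative is to use that $\|\cdot\|_{L^p(\ell^2)}$ is a norm and bound each piece directly by the trivial half of Theorem~\ref{4summand}). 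Your explicit observation of the label swap $A\leftrightarrow B$ between the two statements is correct and worth noting, since the paper leaves it implicit.
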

\begin{proof}Just as indicated above, we use Corollary \ref{sqfndecoup} and then, in a routine convexification argment, Theorem \ref{4summand} for $\frac{2}{p}$ and $\left|f_{i,j}\right|^p$ to get the desired equivalence. The resulting decomposition is then improved by putting all summands to $0$ for $i\geq j$ and replacing $a_{i,j}$ etc. by $P_2a_{i,j}$ etc. (here, $P_2$ acts on functions on $\Omega^2$, namely $P_2=\p{\id-\E}\otimes\p{\id-\E}$), which is legal because
\beq f_{i,j}= P_2f_{i,j}= P_2a_{i,j}+P_2b_{i,j}+P_2c_{i,j}+P_2d_{i,j}.\eeq\end{proof}



\begin{proof}[Proof of Theorem \ref{4summand}.] Since all the norms involved, and consequently their interpolation sums, depend only on the modulus of a function, we may assume $f_{i,j}\geq 0$. By $\|\cdot\|_{\ell^p}\leq \|\cdot\|_{\ell^1}$ and $\|\cdot\|_{L^1}\leq \|\cdot\|_{L^p}$ we have
\[\int_{\Omega^n}\int_{\Omega^n}\left(\sum_{i,j}f_{i,j}\left(x_i,y_j\right)^p\right)^\frac{1}{p} \d x\d y\leq \int_{\Omega^n}\int_{\Omega^n}\sum_{i,j}\left|f_{i,j}\left(x_i,y_j\right)\right|\d x\d y \] \[=\sum_{i,j}\int_{\Omega}\int_{\Omega}\left|f_{i,j}\left(x_i,y_j\right)\right|\d x_i\d y_j= \left\|\coprod_{i,j}f_{i,j}\right\|_{L^1\left(\coprod_i\Omega\times \coprod_j\Omega\right)},\]

\[\int_{\Omega^n}\int_{\Omega^n}\left(\sum_{i,j}f_{i,j}\left(x_i,y_j\right)^p\right)^\frac{1}{p} \d x\d y\leq \sqrt{\int_{\Omega^n}\int_{\Omega^n}\sum_{i,j}f_{i,j}\left(x_i,y_j\right)^p\d x\d y }\] \[=\sqrt{\sum_{i,j}\int_{\Omega}\int_{\Omega}f_{i,j}\left(x_i,y_j\right)^p\d x_i\d y_j }= \left\|\coprod_{i,j}f_{i,j}\right\|_{L^p\left(\coprod_i\Omega\times \coprod_j\Omega\right)},\]

\[\int_{\Omega^n}\int_{\Omega^n}\left(\sum_{i,j}f_{i,j}\left(x_i,y_j\right)^p\right)^\frac{1}{p} \d x\d y\leq \int_{\Omega^n}\sum_i\sqrt{\int_{\Omega^n}\sum_{j}f_{i,j}\left(x_i,y_j\right)^p\d y} \d x\]
\[=\sum_i\int_\Omega\sqrt{\sum_j\int_\Omega f_{i,j}\left(x_{i},y_{j}\right)^p \d y_j}\d x_i= \left\|\coprod_{i,j}f_{i,j}\right\|_{L^1\left(\coprod_i\Omega,L^p\left(\coprod_j\Omega\right)\right)},\]

\[\int_{\Omega^n}\int_{\Omega^n}\left(\sum_{i,j}f_{i,j}\left(x_i,y_j\right)^p\right)^\frac{1}{p} \d x\d y\leq \int_{\Omega^n}\sum_j\p{\int_{\Omega^n}\sum_{i}f_{i,j}\left(x_i,y_j\right)^p\d x}^\frac12 \d y\]
\[=\sum_j\int_\Omega\p{\sum_i\int_\Omega f_{i,j}\left(x_{i},y_{j}\right)^p \d x_i}^\frac12\d y_j= \left\|\coprod_{i,j}f_{i,j}\right\|_{L^1\left(\coprod_j\Omega,L^p\left(\coprod_i\Omega\right)\right)}.\]
Thus, the $\lesssim$ inequality of \eqref{eq:4sumequiv} follows with constant $1$ and the only interesting part is the $\gtrsim$ inequality, i.e. the existence of a decomposition satisfying \eqref{eq:4suma}-\eqref{eq:4sumd}. \par
We perform the discretization procedure as described in the proof of Theorem \ref{JSch}, with the only changes being that as the dense set we choose functions that attain finitely many values, each (possibly treated with repetitions) of them on a set of product form, and our atoms also have to be products of atoms in $\Omega$. This allows us to assume that $\Omega$ is finite and equipped with the counting measure. Also, because both sides are lattice norms, we will be content with $a,b,c,d$ satisfying the desired ineqaulity and $a_{i,j}+b_{i,j}+c_{i,j}+d_{i,j}\geq f_{i,j}$ instead of $=f_{i,j}$. \par
Let $F_i$ be a function on $\Omega\times \Omega^n$ defined by the formula 
\beq F_i\left(\xi,y\right)=\p{\sum_j f_{i,j}\left(\xi,y_j\right)}^\frac{1}{p}.\eeq
We can write \eqref{eq:lesssim1} in the form 
\beq\nonumber\label{eq:bigfless1}\int_{\Omega^n} \int_{\Omega^n}\sqrt{\sum_i F_i\left(x_i,y\right)^p}\d x \d y\leq 1.\eeq 
By fixing $y$ and applying Theorem \ref{JSch} to the sequence of functions $F_i\left(\cdot,y\right)$, we get a decomposition 
\[F_i\left(x_i,y\right)= G_i\left(x_i,y\right)+H_i\left(x_i,y\right)\] 
such that 
\beq \label{eq:bigG}\sum_i\int_\Omega \left|G_i\left(\xi,y\right)\right|\d \xi\lesssim \int_{\Omega^n}\p{\sum_i F_i\left(x_i,y\right)^p}^\frac{1}{p}\d x,\eeq
\beq\label{eq:bigH}\p{\sum_i\int_\Omega H_i\left(\xi,y\right)^p\d \xi}^\frac{1}{p}\lesssim \int_{\Omega^n}\p{\sum_i F_i\left(x_i,y\right)^p}^\frac{1}{p}\d x.\eeq
Utilising the condition $\supp G_i\left(\cdot,y\right)\cap \supp H_i\left(\cdot,y\right)=\emptyset$ for all $y$ gives 
\[G_i\left(\xi,y\right)=w_i\left(\xi,y\right)F_i\left(\xi,y\right),\quad H_i\left(\xi,y\right)=\left(1-w_i\left(\xi,y\right)\right)F_i\left(\xi,y\right)\] for some sequence of functions $w_i:\Omega\times \Omega^n\to\{0,1\}$. Plugging into \eqref{eq:bigG} and integrating with respect to $y$ we get
\begin{eqnarray}\nonumber 1&\gtrsim& \int_{\Omega^n}\sum_i \int_{\Omega}\left|w_i\left(\xi ,y\right)F_{i}\left(\xi ,y\right)\right|\d \xi  \d y\\ &=& \label{eq:l1part}\sum_i \int_{\Omega}\int_{\Omega^n}\p{\sum_j \left(w_i\left(\xi ,y\right)f_{i,j}\left(\xi ,y_j\right)\right)^p}^\frac{1}{p}\d y\d \xi .\end{eqnarray} 
Let us fix $i$, $\xi $ and denote \beq \label{eq:wdefn}W_{i,j}\left(\xi ,\upsilon \right)=\mathbbm{1}_{\left\{\left(\E_j w_i\right)\left(\xi ,\cdot\right)\geq \frac12\right\}}\left(\upsilon \right).\eeq 
Applying Theorem \ref{premainlemma} in a setting where $|J|=1$, 
\beq \label{eq:applemma}\int_{\Omega^n}\p{\sum_j \left(w_i\left(\xi ,y\right)f_{i,j}\left(\xi ,y_j\right)\right)^p}^\frac{1}{p}\d y\gtrsim \int_{\Omega^n}\p{\sum_j \left(W_{i,j}\left(\xi ,y_j\right)f_{i,j}\left(\xi ,y_j\right)\right)^p}^\frac1p\d y.\eeq Theorem \ref{JSch} applied to functions $W_{i,j}f_{i,j}\left(x_i,\cdot\right)$ for $j=1,\ldots,n$ provides $\{0,1\}$-valued functions $u_{i,j}$ such that 
\beq\label{eq:aineq}\sum_j\int_\Omega \left|u_{i,j}W_{i,j}f_{i,j}\left(\xi ,\upsilon \right)\right|\d \upsilon \lesssim \int_{\Omega^n}\p{\sum_j \left(W_{i,j}f_{i,j}\left(\xi ,y_j\right)\right)^p}^\frac1p\d y,\eeq 
 \beq\p{\sum_j\int_\Omega \left(\left(1-u_{i,j}\right)W_{i,j}f_{i,j}\left(\xi ,\upsilon \right)\right)^p\d \upsilon }^\frac1p\lesssim \int_{\Omega^n}\p{\sum_j \left(W_{i,j}f_{i,j}\left(\xi ,y_j\right)\right)^p}^\frac1p\d y.\eeq
We may now put \[a_{i,j}=u_{i,j}W_{i,j}f_{i,j},\quad c_{i,j}= \left(1-u_{i,j}\right)W_{i,j}f_{i,j}.\] The desired inequality for $a_{i,j}$ is obtained by integrating \eqref{eq:aineq} and \eqref{eq:applemma} with respect to $\xi$, summing over $i$ and plugging into \eqref{eq:l1part}, and analogously for $c_{i,j}$.\par 
By integrating \eqref{eq:bigH} with respect to $y$ we get 
\begin{eqnarray} \nonumber 1&\gtrsim& \int_{\Omega^n}\p{\sum_i\int_\Omega \p{\left(1-w_{i}\right)\left(\xi ,y\right)F_i\left(\xi ,y\right)}^p\d \xi }^\frac{1}{p}\d y\\ \label{eq:lppart} &=& \int_{\Omega^n} \p{\sum_i \int_\Omega \sum_j \p{\p{1-w_i}\p{\xi ,y}f_{i,j}\p{\xi ,y_j}}^p\d \xi }^\frac1p\d y.\end{eqnarray} 
We may now incorporate $i$ and $x_i$ into one variable running through the space $\coprod_i \Omega$. Since $\Omega$ was equipped with the counting measure, $\coprod_i \Omega$ also is, up to a constant. This puts us in the setting of Theorem \ref{premainlemma} and allows to split the sequence of functions $y\mapsto \p{1-w_{i}}\p{\xi,y}f_{i,j}\p{\xi,y_j}$ into an $L^1\p{\bigsqcup_j \Omega,\ell^p\p{\coprod_i \Omega}}$ part and an $L^p\p{\bigsqcup_j \Omega,\ell^p\p{\coprod_i \Omega}}$ one. Bearing in mind that
\beq\mathbbm{1}_{\left\{\left(\E_j \p{1-w_i}\right)\left(\xi,\cdot\right)\geq \frac{1}{2}\right\}}\left(\upsilon\right)\geq 1-\mathbbm{1}_{\left\{\left(\E_j w_i\right)\left(\xi,\cdot\right)\geq \frac12\right\}}\left(\upsilon\right)=1-W_{i,j}\p{\xi,\upsilon},\eeq
we may apply Theorem \ref{premainlemma} to obtain functions $s_{i,j}:\Omega^2\to\{0,1\}$ such that
\beq\begin{aligned}
\int_{\Omega^n} \sqrt{\sum_i \int_\Omega \sum_j \p{\p{1-w_i}\p{\xi ,y}f_{i,j}\p{\xi ,y_j}}^p\d \xi }\d y &\gtrsim\\
\sum_j\int_\Omega \p{ \sum_i \int_\Omega \p{ s_{i,j}\p{1-W_{i,j}}f_{i,j}\p{\xi,\upsilon}}^p\d \xi}^\frac1p\d \upsilon& +\\
\p{ \sum_j\int_\Omega \sum_i \int_\Omega \p{ \p{1-s_{i,j}}\p{1-W_{i,j}}f_{i,j}\p{\xi,\upsilon}}^p\d \xi\d \upsilon}^\frac1p.
\end{aligned}\eeq
This allows us to take
\beq d_{i,j}= s_{i,j}\p{1-W_{i,j}},\quad b_{i,j}=\p{1-s_{i,j}}\p{1-W_{i,j}}.\eeq
The desired inequalities for $b_{i,j}$ and $d_{i,j}$ are directly verified. By definition of $a_{i,j},b_{i,j},c_{i,j},d_{i,j}$, we have $a_{i,j}+b_{i,j}+c_{i,j}+d_{i,j}=f_{i,j}$. They are also disjointly supported due to $W_{i,j},u_{i,j},s_{i,j}$ being $\{0,1\}$-valued, which ends the proof.\end{proof} \par

\subsection{The general case} \label{ssdecomp2m}We will now prove Theorem \ref{4summand} in full generality. For brevity, we will denote $\coprod_{i=1}^n\Omega$ by $\overline{\Omega}$, variables running through $\overline{\Omega}$ by $\overline{x},\overline{y},\ldots$ and write $\d \overline{x}= \d\p{ \coprod_i \mu}\p{\overline{x}}$. For example, if $\varphi_i\in L^1\p{\Omega}$, then \[\int_{\overline\Omega}\p{\coprod_i \varphi_i}\p{\overline{x}}\d\overline{x}= \sum_i\int_\Omega \varphi_i\p{x_i}\d x_i.\]

\begin{thm}\label{2msummand}
For $i=\p{i_1,\ldots,i_m}\in\{1,\ldots,n\}^m$, let $f_i\in L^1\p{\Omega^m}$. Suppose that
\beq \label{eq:main1stline}\overbrace{\int_{\Omega^{n}}\cdots\int_{\Omega^{n}}}^m \sqrtp{\sum_i f_i\p{x^{(1)}_{i_1},\ldots,x^{(m)}_{i_m}}^p}\d x^{(1)}\ldots \d x^{(m)}=1.\eeq 
Then, treating $\coprod_{i}f_i$ as a function on $\overline{\Omega}^m$, we have
\beq \label{eq:main2ndline}1\leq \inf_{\coprod_i f_i=\sum_{J\subset [1,m]}a^{(J)}}\sum_{J\subset [1,m]} \left\|a^{(J)}\right\|_{L^1\p{\overline{\Omega}^{J'},L^p\p{\overline{\Omega}^J}}}\leq C_m \eeq and $a^{(J)}= \coprod_i a^{(J)}_i$ can be chosen such that for any $i$, the supports of $a^{(J)}_i$ for different $J$ are disjoint. 
\end{thm}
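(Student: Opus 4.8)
The plan is to prove Theorem~\ref{2msummand} by induction on $m$, following the scheme already used in the proof of Theorem~\ref{4summand} (which is the case $m=2$) but organizing the bookkeeping so that the $2^m$ summands $a^{(J)}$ arise naturally. The $\lesssim$ direction of \eqref{eq:main2ndline} is elementary and handled exactly as in Theorem~\ref{4summand}: for each $J\subset[1,m]$ one estimates the left side of \eqref{eq:main1stline} from above by $\|\coprod_i f_i\|_{L^1(\overline\Omega^{J'},L^p(\overline\Omega^J))}$ using $\|\cdot\|_{\ell^p}\leq\|\cdot\|_{\ell^1}$ in the variables indexed by $J'$ and $\|\cdot\|_{L^1}\leq\|\cdot\|_{L^p}$ in the variables indexed by $J$; any fixed $J$ (say $J=\emptyset$) already gives the bound with constant $1$. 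So the content is the $\gtrsim$ direction, i.e. producing the decomposition.

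For the hard direction, I would set up the induction as follows. Fix the last coordinate block. Write $x^{(m)}$ for the variable over which we will first apply the one-variable Johnson--Schechtman inequality, and for each index $i_m$ introduce
\[
F_{i_m}\bigl(\xi,\, x^{(1)},\ldots,x^{(m-1)}\bigr)=\Bigl(\sum_{i_1,\ldots,i_{m-1}} f_{i_1,\ldots,i_m}\bigl(x^{(1)}_{i_1},\ldots,x^{(m-1)}_{i_{m-1}},\xi\bigr)^p\Bigr)^{1/p},
\]
so that \eqref{eq:main1stline} becomes $\int\cdots\int\bigl(\sum_{i_m}F_{i_m}(x^{(m)}_{i_m},\cdot)^p\bigr)^{1/p}=1$. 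Applying Theorem~\ref{JSch} in the variable $x^{(m)}$, with the remaining variables frozen, splits each $F_{i_m}$ into a part $G_{i_m}=w_{i_m}F_{i_m}$ with small $L^1(\coprod_{i_m}\Omega)$ norm and a part $H_{i_m}=(1-w_{i_m})F_{i_m}$ with small $L^p(\coprod_{i_m}\Omega)$ norm, $w_{i_m}$ being $\{0,1\}$-valued by the disjoint-support clause. The weight $w_{i_m}$ depends on all variables, which is exactly the obstruction that Theorem~\ref{premainlemma} is designed to remove: replacing $w_{i_m}$ by the $\{0,1\}$-valued function $W_{i_m}$ that indicates $\{\E_{x^{(m)}_{i_m}}w_{i_m}\geq \tfrac12\}$ (a function of $\xi=x^{(m)}_{i_m}$ and the other variables only through $x^{(1)},\ldots,x^{(m-1)}$), Theorem~\ref{premainlemma} with $\kappa=\tfrac12$ lets us pass, up to an absolute constant, to the sum with $W_{i_m}f_i$ in place of $w_{i_m}F_{i_m}$ inside the $\ell^p$. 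Now $W_{i_m}$ factors the $x^{(m)}$-dependence cleanly, so $W_{i_m}f_i$ is again a function of the product form. On the $G$-branch we have reduced to an $(m-1)$-fold expression of the same type (the $x^{(m)}$-integral has become a plain $L^1(\coprod_{i_m}\Omega)$ over $\xi$, so morally $J\not\ni m$), to which we apply the inductive hypothesis; on the $H$-branch the $x^{(m)}$-integral is an $L^p(\coprod_{i_m}\Omega)$, which we first pull outside using Minkowski's integral inequality (valid since $p\geq1$), again reaching an $(m-1)$-fold expression of the same type, but now one whose terminal norm is $L^p$ in the $m$-th block ($J\ni m$). Applying the inductive hypothesis on each branch produces $2^{m-1}$ summands each, which combine to give the $2^m$ summands $a^{(J)}$; the constant is $C_m=c\cdot C_{m-1}$ for an absolute $c$ coming from Theorems~\ref{JSch} and~\ref{premainlemma}, and the disjointness of supports is inherited from the $\{0,1\}$-valuedness of all the weights produced along the way, intersected across the recursion.

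The main obstacle I expect is keeping the recursion honest: after the first split one has a \emph{vector-valued} variant of the $(m-1)$-fold problem (the summand index $i_m$ and its variable $\xi$ have been absorbed into a disjoint union $\coprod_{i_m}\Omega$, and this $\ell^p$-valued extra coordinate must be carried through every subsequent application of Theorems~\ref{JSch} and~\ref{premainlemma}). One must therefore phrase the inductive statement already in the form allowing such a passive $\coprod$-factor with an $\ell^p$ over it — which is precisely why Theorem~\ref{premainlemma} is stated with the auxiliary finite set $J$ and an $\ell^p(J)$ inside, and why the lattice-norm / Minkowski manipulations used in the $m=2$ proof (passing from $L^1(L^p(\ell^p))$ type quantities to genuine mixed norms) go through verbatim. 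Once the inductive statement is set up to absorb this, each individual step is a direct quotation of the $m=2$ argument, and the constant-tracking is routine; I would relegate the explicit value of $C_m$ to a remark rather than optimize it. As in Corollary~\ref{4summandmz}, the mean-zero refinement is then obtained a posteriori by replacing each $a^{(J)}_i$ with $P_{\{i_1,\ldots,i_m\}}a^{(J)}_i$, using that $P$ is a coordinatewise composition of $(\id-\E)$'s and hence does not increase any of the mixed norms involved.
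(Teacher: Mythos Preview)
Your overall plan (induction on $m$, using Theorem~\ref{premainlemma} to remove the weight's bad dependence, and tracking $\{0,1\}$-valued cutoffs for disjointness) is exactly right, and the easy direction together with the mean-zero refinement at the end are fine. But the order in which you run the induction step is reversed relative to the paper, and this is not a cosmetic difference: your step ``apply Theorem~\ref{premainlemma} to pass from $w_{i_m}$ to $W_{i_m}$'' does not go through as written.

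Concretely, after your first Johnson--Schechtman split in the variable $x^{(m)}$ the weight $w_{i_m}$ is already a function of $\xi=x^{(m)}_{i_m}$ only in the last block; what obstructs the inductive hypothesis is its dependence on the \emph{entire} array $x^{(<m)}\in(\Omega^n)^{m-1}$, not just on the relevant coordinates $x^{(1)}_{i_1},\ldots,x^{(m-1)}_{i_{m-1}}$. To replace it by a weight $W_{i}$ depending only on those coordinates you would need Theorem~\ref{premainlemma} with the family $\mathcal{F}_{i_{<m}}=\sigma\bigl(x^{(1)}_{i_1},\ldots,x^{(m-1)}_{i_{m-1}}\bigr)$, but these sigma-algebras are \emph{not independent} for different multi-indices $i_{<m}$ (any two with a common coordinate share a variable), so the hypothesis of Theorem~\ref{premainlemma} fails. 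Your parenthetical ``a function of $\xi$ and the other variables only through $x^{(1)},\ldots,x^{(m-1)}$'' does not help: that is already the dependence of $w_{i_m}$, and it is exactly what prevents you from feeding $W_{i_m}f_i$ to the $(m-1)$-variable theorem. The $\ell^p(J)$ slot in Theorem~\ref{premainlemma} is there to carry a \emph{passive} index, not to circumvent this independence requirement.

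The paper's induction runs in the opposite order: for each fixed $y=x^{(m+1)}$ one applies the $m$-variable inductive hypothesis to $F_i(\cdot,y)$, obtaining $2^m$ weights $w^{(J)}_i(x,y)$ that are already functions of the correct product coordinates $x\in\Omega^m$; the only bad dependence left is on the full $y\in\Omega^n$, and now Theorem~\ref{premainlemma} applies legitimately because the sigma-algebras $\sigma(y_k)$ \emph{are} independent. One then finishes with a single Johnson--Schechtman split in $y$. The threshold $2^{-m}$ (rather than $\tfrac12$) is needed so that $\sum_J W^{(J)}_{i,k}\ge 1$ follows from $\sum_J w^{(J)}_i=1$. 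In short: induction hypothesis first, weighted lemma on the single remaining block, then one-variable split---not the other way around.
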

\textit{Proof.} As previously, $\Omega$ is a finite set with counting measure and $f_i$ are nonnegative. In order to show the first inequality of \eqref{eq:main2ndline}, we just need to check that each of the norms $\left\|\cdot\right\|_{L^1\p{\overline{\Omega}^{J'},L^p\p{\overline{\Omega}^J}}}$ on functions on $\overline{\Omega}^m$ dominates the norm on the left hand side of \eqref{eq:main1stline}. Indeed, if $\ol{a}=\coprod_i a_i$, 

\begin{eqnarray} \lefteqn{ \int_{\Omega^{nm}} \sqrtp{\sum_i a_i\p{x^{(1)}_{i_1},\ldots,x^{(m)}_{i_m}}^p} \d x^{(1)}\ldots \d x^{(m)}}\\ 
&\leq& \label{eq:mspltrivjen}\int_{\p{\Omega^{n}}^{J'}} \sqrtp{ \int_{\p{\Omega^n}^J}\sum_{i_J,i_{J'}}a_{i_J,i_{J'}}\p{x^{(J)}_{i_J},x^{(J')}_{i_{J'}}}^p\d x^{(J)}}\d x^{(J')}\\
&\leq& \label{eq:mspltrivsum}\sum_{i_{J'}}\int_{\p{\Omega^{n}}^{J'}} \sqrtp{ \sum_{i_J}\int_{\p{\Omega^n}^J}a_{i_J,i_{J'}}\p{x^{(J)}_{i_J},x^{(J')}_{i_{J'}}}^p\d x^{(J)}}\d x^{(J')}\\
&=& \label{eq:mspltrivequal}\sum_{i_{J'}}\int_{\Omega^{J'}} \sqrtp{\sum_{i_J}\int_{\Omega^J} a_{i_J,i_{J'}}\p{x^{(J)}_{i_J},x^{(J')}_{i_{J'}}}^p\d x^{(J)}_{i_J}}\d x^{(J')}_{i_{J'}}\\ 
&=& \int_{\ol\Omega^{J'}}\sqrtp{ \int_{\ol\Omega^{J}}\ol{a}\p{\ol{x}^{(J)},\ol{x}^{(J')}}^p\d \ol{x}^{(J)}}\d \ol{x}^{(J')}\\
&=&\label{eq:trivdirl1l2}\left\|\ol{a}\right\|_{L^1\p{\overline{\Omega}^{J'},L^p\p{\overline{\Omega}^J}}}.
\end{eqnarray}
Here, we used abbreviations $i_J= \p{i_j}_{j\in J}$, $x^{(J)}=\p{x^{(j)}}_{j\in J}$, $x^{(J)}_{i_J}= \p{x^{(j)}_{i_j}}_{j\in J}$). The inequality \eqref{eq:mspltrivjen} is $\|\cdot\|_{L^1}\leq \|\cdot\|_{L^p}$ with respect to $x^{(J')}$, \eqref{eq:mspltrivsum} is $\sqrtp{\sum_{i_{J'}}c_{J'}}\leq \sum_{i_{J'}} \sqrtp{c_{i_{J'}}}$, and \eqref{eq:mspltrivequal} comes from the fact that for fixed $i_J$, the integrand depends on $x^{(J)}$ only through $x^{(J)}_{i_J}$. \par
We will now prove the second inequality of \eqref{eq:main2ndline} by induction with respect to $m$. Let us assume that the theorem is true for some $m\geq 1$. The discretization procedure is performed as previously. Let us take $f_{i,k}\in L^1\p{\Omega^{m+1}}$ for $i_1,\ldots,i_m,k\in [1,n]$, $i=\p{i_j}_{1\leq j\leq m}$. We define $F_i\in L^1\p{\Omega^m\times \Omega^n}$ by
\[F_i\p{x_1,\ldots,x_m,y}= \sqrtp{\sum_k f_{i,k}\p{x_1,\ldots,x_m,y_k}^p}\]
for $x_1,\ldots,x_m\in \Omega$ and $y\in\Omega^n$. For a fixed $y$, applying the induction hypothesis to the functions $F_i\p{\cdot,y}\in L^1\p{\Omega^m}$, yields a family of functions $w^{(J)}_i:\Omega^m\times \Omega^n\to \{0,1\}$ such that
\beq \label{eq:sumw1}\sum_J w^{(J)}_i=1\eeq
\beq \label{eq:indhypo} \sum_J \left\|\coprod_i\p{w^{(J)}_i F_i}\right\|_{L^1\p{\ol{\Omega}^{J'}, L^p\p{\ol{\Omega}^{J}}}}\leq C_m \int_{\Omega^{nm}}\sqrtp{\sum_i F_i\p{x^{(\leq m)}_{i_{\leq m}},y}^p}\d x^{(\leq m)}\eeq
(we just take $w^{(J)}_i= a^{(J)}_i/F_i$, because without loss of generality $a^{(J)}_i\p{x,y}=0$ for all $J$ if $F_i(x,y)=0$). We used another abbreviation: $x^{(\leq m)}_{i_{\leq m}}= \p{x^{(j)}_{i_j}}_{j\leq m}$. Let us define
\beq \label{eq:defbigW}W^{(J)}_{i,k}\p{x,y_k}= \mathbbm{1}_{\left\{  \E_k w^{(J)}_i\p{x,\cdot} \geq 2^{-m} \right\}}\p{x, y_k},\eeq where $\E_k$ is taken with respect to the second variable running through $\Omega^n$. \par
Denote $\ol{w}^{(J)}\p{\cdot,y}= \coprod_i w^{(J)}_i\p{\cdot,y}$, $\ol{W}^{(J)}_k\p{\cdot,y}= \coprod_i W^{(J)}_{i,k}\p{\cdot,y}$, $\ol{f}_k\p{\cdot,y}= \coprod_i f_{i,k}\p{\cdot,y}$. Applying \eqref{eq:indhypo} at each $y$, and then Theorem  \ref{premainlemma} at each $J$ and $\ol{x}^{(J')}$ (treating the integral over $\ol{\Omega}^J$ as summation over a finite set) we get
\begin{eqnarray*} \lefteqn{ C_m\int_{\Omega^n}\d y \int_{\Omega^{nm}} \d x^{(\leq m)} \sqrtp{\sum_{i,k} f_{i,k}\p{x^{(\leq m)}_{i_{\leq m}},y_k}^p} } \\
&=& C_m\int_{\Omega^n}\d y \int_{\Omega^{nm}} \d x^{(\leq m)} \sqrtp{\sum_{i} F_{i}\p{x^{(\leq m)}_{i_{\leq m}},y}^p} \\
&\geq& \int_{\Omega^n}\d y \sum_J \left\|\coprod_i\p{w^{(J)}_i F_i}\right\|_{L^1\p{\ol{\Omega}^{J'}, L^p\p{\ol{\Omega}^{J}}}}\\ 
&=& \sum_J \int_{\ol{\Omega}^{J'}}\d \ol{x}^{(J')} \int_{\Omega^n} \d y \sqrtp{ \int_{\ol{\Omega}^J} \sum_k \ol{w}^{(J)}\p{\ol{x},y}\ol{f}_k\p{\ol{x},y_k}^p\d \ol{x}^{(J)}} \\
&\gtrsim& 2^{-2m} \sum_J \int_{\ol{\Omega}^{J'}}\d \ol{x}^{(J')} \int_{\Omega^n} \d y \sqrtp{ \int_{\ol{\Omega}^J} \sum_k \ol{W}_k^{(J)}\ol{f}_k\p{\ol{x},y_k}^p\d \ol{x}^{(J)}}\\ 
&=& 2^{-2m} \sum_J \int_{\ol{\Omega}^{J'}}\d \ol{x}^{(J')} \int_{\Omega^n} \d y \sqrtp{ \sum_k \ol{\varphi}^{(J)}_k\p{\ol{x}^{(J')},y_k}^p },  \end{eqnarray*}
where 
\[ \ol{\varphi}_k^{(J)}\p{\ol{x}^{(J')},y_k}= \sqrtp{\int_{\ol{\Omega}^J} \ol{W}_k^{(J)}\ol{f}_k\p{\ol{x}^{(J)},\ol{x}^{(J')},y_k} \d \ol{x}^{(J)} }.\]
For each $J$ and $\ol{x}^{(J')}$, we can apply Theorem \ref{JSch} to functions $\ol{\varphi}_k^{(J)}\p{\ol{x}^{(J')},y_k}$. This produces $u^{(J)}_k:\ol{\Omega}^{J'}\times \Omega\to \{0,1\}$ such that 
\begin{eqnarray*} \int_{\Omega^n} \d y \sqrtp{ \sum_k \ol{\varphi}^{(J)}_k\p{\ol{x}^{(J')},y_k}^p }&\gtrsim& \sum_k \int_{\Omega} \ol{u}^{(J)}_k\ol{\varphi}_k^{(J)}\p{\ol{x}^{(J')},y_k}\d y_k\\ &+& \sqrtp{\sum_k \int_{\Omega} \p{1-\ol{u}^{(J)}_k}\ol{\varphi}_k^{(J)}\p{\ol{x}^{(J')},y_k}^p\d y_k}.\end{eqnarray*}
Plugging this into the previous inequality, we get
\begin{eqnarray*} \lefteqn{ \sum_J \int_{\ol{\Omega}^{J'}}\d \ol{x}^{(J')} \int_{\Omega^n} \d y \sqrtp{ \sum_k \ol{\varphi}^{(J)}_k\p{\ol{x}^{(J')},y_k}^p }}\\
&\gtrsim&  \sum_J \int_{\ol{\Omega}^{J'}}\d \ol{x}^{(J')} \sum_k \int_{\Omega} \ol{u}^{(J)}_k\ol{\varphi}_k^{(J)}\p{\ol{x}^{(J')},y_k}\d y_k\\ 
&\ & + \sum_J \int_{\ol{\Omega}^{J'}}\d \ol{x}^{(J')} \sqrtp{\sum_k \int_{\Omega} \p{1-\ol{u}^{(J)}_k}\ol{\varphi}_k^{(J)}\p{\ol{x}^{(J')},y_k}^p\d y_k}\\
&=&  \sum_J \int_{\ol{\Omega}^{J'}}\d \ol{x}^{(J')} \sum_k \int_{\Omega} \sqrtp{\int_{\ol{\Omega}^J} \ol{u}^{(J)}_k\ol{W}^{(J)}_k\ol{f}_k\p{\ol{x},y_k}^p\d \ol{x}^{(J)}} \d y_k\\ 
&\ & + \sum_J \int_{\ol{\Omega}^{J'}}\d \ol{x}^{(J')} \sqrtp{\sum_k \int_{\Omega} \int_{\ol{\Omega}^J} \p{1-\ol{u}^{(J)}_k}\ol{W}^{(J)}_k\ol{f}_k\p{\ol{x},y_k}^p\d \ol{x}^{(J)} \d y_k}\\ 
&=& \sum_{J\subset[1,m]} \int_{\ol{\Omega}^{J'}}\d \ol{x}^{(J')} \int_{\ol\Omega} \sqrtp{\int_{\ol{\Omega}^J} \coprod_k \overline{A}^{(J)}_{k}\p{\ol{x},\ol{y}}^p\d \ol{x}^{(J)}} \d \ol{y} \\
&\ & + \sum_{J\subset [1,m]} \int_{\ol{\Omega}^{J'}}\d \ol{x}^{(J')} \sqrtp{\int_{\ol\Omega} \int_{\ol{\Omega}^J} \coprod_k\overline{A}^{(J\cup\{m+1\})}_{k}\p{\ol{x},\ol{y}}^p\d \ol{x}^{(J)} \d \ol{y}}\\
&=& \sum_{J\subset [1,m]} \left\|\coprod_k \ol{A}^{(J)}_k\right\|_{L^1\p{\ol{\Omega}^{J'}\times \ol{\Omega},L^p\p{\ol{\Omega}^J}}}+ \sum_{J\subset [1,m]} \left\|\coprod_k \ol{A}^{(J\cup\{m+1\})}_k\right\|_{L^1\p{\ol{\Omega}^{J'},L^p\p{\ol{\Omega}^J\times \ol{\Omega}}}}\\
&=& \sum_{J\subset [1,m+1]}  \left\|\coprod_k \ol{A}^{(J)}_k\right\|_{L^1\p{\ol{\Omega}^{[1,m+1]\setminus J},L^p\p{\ol{\Omega}^J}}}\end{eqnarray*}
where the functions $A^{(J)}_{i,k}:\Omega^m\times\Omega\to \mathbb{R}$ for $J\subset[1,m+1]$ are defined by \[A^{(J)}_{i,k}(x,y)=\left\{\begin{matrix}u^{(J)}_{i,k}W^{(J)}_{i,k}f_{i,k}\p{x,y}& \text{ if }m+1\notin J\\ \p{1-u^{(J)}_{i,k}}W^{(J)}_{i,k}f_{i,k}\p{x,y}& \text{ if }m+1\in J.\end{matrix}\right.\]
Let us recall that by \eqref{eq:sumw1}, \[\sum_{J\subset[1,m]} \E_k w^{(J)}_i=1\] for all $i,k$ and consequently
\[\bigcup_{J\subset[1,m]} \left\{\E_k w^{(J)}_i\p{x,\cdot}\geq 2^{-m}\right\}=\Omega^n\] for any $i,k$ and $x\in\Omega^m$, which by \eqref{eq:defbigW} implies 
\[\sum_J W^{(J)}_{i,k}\geq 1.\]
Therefore 
\begin{eqnarray*} \left|\sum_{J\subset[1,m+1]}A^{(J)}_{i,k}\right| &=& \left|\sum_{J\subset[1,m]}W^{(J)}_{i,k}f_{i,k}\right|\\ 
&=& \left|\sum_{J\subset[1,m]}W^{(J)}_{i,k}\right| \left|f_{i,k}\right|\\ 
&\geq & \left|f_{i,k}\right|.\end{eqnarray*}
Ultimately, using the fact that $\sum_{J\subset[1,m+1]} L^1\p{\ol{\Omega}^{[1,m+1]\setminus J},L^p\p{\ol{\Omega}^J}}$ is a lattice, we get 
\begin{eqnarray*} \lefteqn{ C_m\int_{\Omega^n}\d y \int_{\Omega^{nm}} \d x^{(\leq m)} \sqrtp{\sum_{i,k} f_{i,k}\p{x^{(\leq m)}_{i_{\leq m}},y_k}^p} }\\
&\gtrsim & 2^{-2m} \left\| \sum_{J\subset[1,m+1]} \coprod_k \ol{A}_k^{(J)}\right\|_{\sum_{J\subset[1,m+1]} L^1\p{\ol{\Omega}^{[1,m+1]\setminus J},L^p\p{\ol{\Omega}^J}}} \\ 
&\geq & 2^{-2m} \left\| \coprod_{i,k} f_{i,k}\right\|_{\sum_{J\subset[1,m+1]} L^1\p{\ol{\Omega}^{[1,m+1]\setminus J},L^p\p{\ol{\Omega}^J}}} \end{eqnarray*}
as desired, with $C_{m+1}=C2^{2m}C_{m}$ for some numerical constant $C$. Once we have a decomposition verifying \eqref{eq:main2ndline}, for each $\ol{x}\in\ol{\Omega}^m$ we select $J_{\ol{x}}$ such that 
\[\left|a^{(J_{\ol{x}})}\p{\ol{x}}\right|\geq 2^{-m} \left|\ol{f}\p{\ol{x}}\right|.\]
Now, the functions 
\[\alpha^{(J)}\p{\ol{x}}= \left\{\begin{matrix} \ol{f}\p{\ol{x}}&\text{if } J=J_{\ol{x}}\\ 0 & \text{otherwise}\end{matrix}\right.\] are dominated by $a^{(J)}$ up to the constant $2^m$ and their sum over $J$ is $\ol{f}$. \qed\par
By an identical reasoning as previously, we get
\begin{cor}\label{2msummandmz}
Let $1\leq p\leq 2$ and $f\in U^p_m\p{\Omega^n}$ have a representation 
\beq f(x)=\sum_{1\leq i_1<\ldots<i_m\leq n} f_{i_1,\ldots,f_{i_m}}\p{x_{i_1},\ldots,x_{i_m}}.\eeq
Then
\beq \left\|f\right\|_{L^p}\simeq_m\inf_{f=\sum_{J\subset [1,m]}a^{(J)}}\sum_{J\subset [1,m]} \left\|\coprod_i a^{(J)}_i\right\|_{L^p\p{\overline{\Omega}^{J'},L^2\p{\overline{\Omega}^J}}},\eeq 
where the infimum is taken over decompositions of $f$ into summands $a^{(J)}\in U^p_m$ and 
\beq a^{(J)}(x)=\sum_{1\leq i_1<\ldots<i_m\leq n} a^{(J)}_i\p{x_{i_1},\ldots,x_{i_m}}.\eeq
\end{cor}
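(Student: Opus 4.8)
The plan is to deduce this from Theorem \ref{2msummand} by exactly the convexification used to pass from Theorem \ref{4summand} to Corollary \ref{4summandmz}. Write $f_i=P_{i_1,\dots,i_m}f$, so that each $f_i$ is mean zero in every variable and $f(x)=\sum_{i_1<\dots<i_m}f_i(x_{i_1},\dots,x_{i_m})$. By Bourgain's square function theorem and the multivariate decoupling Corollary \ref{sqfndecoup} — whose constants can be taken uniform for $p\in[1,2]$, being assembled from Khintchine--Kahane constants on $[1,2]$ — one has
\beq \|f\|_{L^p(\Omega^n)}^p\simeq_m \overbrace{\int_{\Omega^n}\cdots\int_{\Omega^n}}^{m}\Big(\sum_{i_1<\dots<i_m}f_i\big(x^{(1)}_{i_1},\dots,x^{(m)}_{i_m}\big)^2\Big)^{\frac p2}\,\d x^{(1)}\cdots\d x^{(m)}.\eeq
Setting $g_i=|f_i|^p$ and $q=2/p\ge 1$, the right-hand side equals $\overbrace{\int\cdots\int}^{m}\big(\sum_i g_i^{\,q}\big)^{1/q}$, so Theorem \ref{2msummand} with exponent $q$ produces nonnegative $\widetilde a^{(J)}=\coprod_i\widetilde a^{(J)}_i$ with $\coprod_i g_i=\sum_{J\subset[1,m]}\widetilde a^{(J)}$, with $\sum_J\|\widetilde a^{(J)}\|_{L^1(\overline\Omega^{J'},L^q(\overline\Omega^J))}\simeq_m\|f\|_{L^p}^p$, and with the $\widetilde a^{(J)}_i$ disjointly supported for each fixed $i$.

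Next I would un-convexify. Since the $\widetilde a^{(J)}_i$ are nonnegative, disjoint in $J$, and sum to $|f_i|^p$, putting $a^{(J)}_i=\sgn(f_i)\,(\widetilde a^{(J)}_i)^{1/p}$ gives $\sum_{J}a^{(J)}_i=f_i$ pointwise (at each point only one term is nonzero, and there it equals $f_i$); moreover pointwise $(a^{(J)})^2=(\widetilde a^{(J)})^{q}$ on $\overline\Omega^m$, whence
\beq \big\|a^{(J)}\big\|_{L^p(\overline\Omega^{J'},L^2(\overline\Omega^J))}^p=\int_{\overline\Omega^{J'}}\Big(\int_{\overline\Omega^J}(\widetilde a^{(J)})^{q}\Big)^{\frac1q}=\big\|\widetilde a^{(J)}\big\|_{L^1(\overline\Omega^{J'},L^q(\overline\Omega^J))}.\eeq
As there are at most $2^m$ indices $J$ and each term is $\lesssim_m\|f\|_{L^p}^p$, taking $p$-th roots and summing gives $\sum_J\|a^{(J)}\|_{L^p(\overline\Omega^{J'},L^2(\overline\Omega^J))}\lesssim_m\|f\|_{L^p}$, which is the nontrivial ``$\gtrsim$''. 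For the opposite inequality, given any decomposition $f=\sum_J a^{(J)}$ with $a^{(J)}\in U^p_m$ and building blocks $a^{(J)}_i$, apply Corollary \ref{sqfndecoup} to $a^{(J)}$ and then the elementary estimate \eqref{eq:mspltrivjen}--\eqref{eq:trivdirl1l2} with exponent $q$ to $|a^{(J)}_i|^p$: this yields $\|a^{(J)}\|_{L^p(\Omega^n)}\lesssim_m\|\coprod_i a^{(J)}_i\|_{L^p(\overline\Omega^{J'},L^2(\overline\Omega^J))}$, and summing over $J$ together with the triangle inequality closes the loop.

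It remains to make the $a^{(J)}$ mean zero in each variable: replace $a^{(J)}_i$ by $P_m a^{(J)}_i$, where $P_m=(\id-\E)^{\otimes m}$ acts on $L^p(\Omega^m)$. Since $f_i=P_m f_i$, one still has $\sum_J P_m a^{(J)}_i=f_i$; and because $\id-\E$ is an $L^2$-contraction on the $J$-variables and has norm at most $2$ on each of the $|J'|\le m$ remaining variables, $\|P_m a^{(J)}\|_{L^p(\overline\Omega^{J'},L^2(\overline\Omega^J))}\le 2^m\|a^{(J)}\|_{L^p(\overline\Omega^{J'},L^2(\overline\Omega^J))}$, so every estimate persists up to a factor depending only on $m$. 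I do not expect a genuine obstacle here; the only points requiring care are the uniformity in $p\in[1,2]$ of the Khintchine/Bourgain constants behind Corollary \ref{sqfndecoup} (needed so that the final constant depends on $m$ alone) and the $\sgn$/disjoint-support bookkeeping that turns the lattice-level decomposition of $|f|^p$ supplied by Theorem \ref{2msummand} into an honest mean-zero decomposition of $f$ itself.
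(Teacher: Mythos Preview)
Your argument is correct and is exactly the approach the paper takes: the paper's proof of Corollary \ref{2msummandmz} is the single sentence ``By an identical reasoning as previously,'' referring to the proof of Corollary \ref{4summandmz}, which is precisely the convexification $g_i=|f_i|^p$, $q=2/p$, followed by applying $P_m=(\id-\E)^{\otimes m}$ to restore the mean-zero condition. Your write-up is in fact more detailed than the paper's---you spell out the $\sgn$/disjoint-support bookkeeping for un-convexifying and you verify the easy direction explicitly---but the route is identical.
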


\section{Interpolation of $U^1_m\p{\Omega^\infty,L^p(\mathbb{R})}$}\label{secinterp}
We can extend the definition of spaces $\V_m^p$ to the vector-valued setting. Let $B$ be a Banach space. Then we define $\V_m^p\p{\Omega^n,B}$ as the closure of $\V_m^p\p{\Omega^n}\otimes B$ in the Bochner space $L^p\p{\Omega^n,B}$. In particular, $\V_1^p$ consists of functions of the form $f(x)=\sum_i f\p{x_i}$, where $f_i\in L^p\p{\Omega,B}$ and $\int_\Omega f_i\p{x_i}\d x_i=0$. By combining Corollary \ref{sqfndecoup} with Lemma \ref{rbdd}, we get
\begin{cor}\label{decoupvectorval}Let $f\in \V_{\leq M}^1\p{\Omega^n,B}$, where $B$ is a Hilbert space. Then 
\beq \|f\|_{L^1\p{\Omega^n,B}}\simeq_M \sum_{0\leq m\leq M}\int_{\Omega^{nm}}\sqrt{\sum_{i_1<\ldots<i_m}\left\|f_i\p{ \p{x^{(j)}_{i_j}}_{j=1,\ldots,m}}\right\|_B^2}\d x^{(\leq m)}.\eeq
\end{cor}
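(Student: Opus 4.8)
The plan is to derive the $B$-valued equivalence by combining the scalar decoupled square-function estimate, Corollary \ref{sqfndecoup} in the case $p=1$, with the Hilbert-space tensoring Lemma \ref{rbdd}. The point is that Corollary \ref{sqfndecoup} already says that a certain linear map $\Phi$, taking a function in $\V^1_{\leq M}\p{\Omega^n}$ to the array of its decoupled Hoeffding components, is an isomorphism onto its image; Lemma \ref{rbdd} then upgrades this isomorphism to its tensor with $\id_B$. Throughout one identifies the Hilbert space $B$ with $\ell^2(I)$ for some index set $I$, so that $L^1\p{\Omega^n,B}=L^1\p{\Omega^n,\ell^2(I)}$ and, more generally, every Bochner space $L^1\p{S,L^2(T,B)}$ occurring below is an $L^1\p{S,\ell^2(I')}$ space of the type required by Lemma \ref{rbdd}.

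First I would phrase the scalar result as an isomorphism. For $0\leq m\leq M$ set $I_m=\left\{\p{i_1,\ldots,i_m}:1\leq i_1<\ldots<i_m\leq n\right\}$, let $X_1=\V^1_{\leq M}\p{\Omega^n}\subset L^1\p{\Omega^n}$, and define the linear map $\Phi$ on $X_1$ so that its $m$-th component sends $g$ to the $\ell^2(I_m)$-valued function $\p{g_i\p{x^{(1)}_{i_1},\ldots,x^{(m)}_{i_m}}}_{i\in I_m}$ on $\Omega^{nm}$, where $P_ig\p{x}=g_i\p{x_{i_1},\ldots,x_{i_m}}$; thus $\Phi g\in\bigoplus_{m\leq M}L^1\p{\Omega^{nm},\ell^2(I_m)}$. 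Each $P_i$ is, by \eqref{eq:pdef}, a tensor product of operators $\id-\E$ and $\E$ that are bounded on $L^1\p{\Omega}$, hence bounded on $L^1\p{\Omega^n}$; so $\Phi$ is well defined, and Corollary \ref{sqfndecoup} at $p=1$ is precisely the statement
\[\n{g}_{L^1\p{\Omega^n}}\simeq_M\sum_{m\leq M}\int_{\Omega^{nm}}\sqrt{\sum_{i\in I_m}g_i\p{x^{(1)}_{i_1},\ldots,x^{(m)}_{i_m}}^2}\,\d x^{(\leq m)}=\n{\Phi g},\]
once one realizes the direct sum $\bigoplus_{m\leq M}L^1\p{\Omega^{nm},\ell^2(I_m)}$ as a single $L^1\p{S_2,\ell^2(I_2)}$ with $S_2=\coprod_{m\leq M}\Omega^{nm}$ and $I_2=\coprod_{m\leq M}I_m$. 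In particular $\Phi$ is bounded below, so its image $X_2:=\Phi\p{X_1}\subset L^1\p{S_2,\ell^2(I_2)}$ is a closed subspace and $\Phi:X_1\to X_2$ is an isomorphism with $\n{\Phi},\n{\Phi^{-1}}\lesssim_M 1$.

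Next I would apply Lemma \ref{rbdd} twice with the given Hilbert space $B$: once to $\Phi:X_1\to X_2$ and once to $\Phi^{-1}:X_2\to X_1$. This produces bounded operators $\Phi\otimes\id_B:X_1\otimes B\to X_2\otimes B$ and $\Phi^{-1}\otimes\id_B:X_2\otimes B\to X_1\otimes B$, each of norm $\lesssim_M 1$; their composition is the identity on the algebraic tensor product, hence (both operators being bounded) on all of $X_1\otimes B$ (respectively $X_2\otimes B$), so $\Phi\otimes\id_B$ is an isomorphism. Now $X_1\otimes B=\V^1_{\leq M}\p{\Omega^n,B}$ by the definition of the vector-valued Hoeffding spaces, and, since $X_2\otimes B\subset L^1\p{S_2,\ell^2(I_2,B)}$, unwinding the corresponding Bochner norm yields, for $f\in\V^1_{\leq M}\p{\Omega^n,B}$,
\[\n{\p{\Phi\otimes\id_B}f}_{X_2\otimes B}=\sum_{m\leq M}\int_{\Omega^{nm}}\sqrt{\sum_{i\in I_m}\left\|f_i\p{\p{x^{(j)}_{i_j}}_{j=1,\ldots,m}}\right\|_B^2}\,\d x^{(\leq m)},\]
where $f_i$ is the $B$-valued decoupled $i$-th Hoeffding component of $f$. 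Combining this with the norm equivalence for $\Phi\otimes\id_B$ is exactly the claimed equivalence.

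All the genuine content here is borrowed: the scalar decoupled square-function estimate is Corollary \ref{sqfndecoup} (itself Bourgain's theorem together with Zinn's decoupling), and the passage to Hilbert-space coefficients is Lemma \ref{rbdd}, which is also precisely why the hypothesis that $B$ be a Hilbert space is indispensable, as it is what lets one exchange $\ell^2$ of $B$-valued vectors for $B$-valued $\ell^2$. Consequently I do not expect a substantial obstacle; the only real work is bookkeeping: checking that $\Phi$ and $\Phi^{-1}$ are honest bounded linear maps between the stated closed subspaces, matching the algebraic tensor products $X_i\otimes B$ with the corresponding Bochner subspaces so that Lemma \ref{rbdd} applies verbatim, and verifying that each $P_i$ commutes with tensoring by a fixed vector of $B$ so that ``$f_i$'' is unambiguously the $B$-valued decoupled $i$-th Hoeffding component of $f$. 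None of this goes beyond the constructions recalled in Section \ref{secoverview}.
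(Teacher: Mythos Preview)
Your proposal is correct and follows exactly the route the paper indicates: the paper's proof is the single sentence ``By combining Corollary \ref{sqfndecoup} with Lemma \ref{rbdd}'', and what you have written is a careful unpacking of precisely that combination---packaging the scalar decoupled square-function estimate as an isomorphism $\Phi$ between subspaces of $L^1\p{S_i,\ell^2(I_i)}$ spaces, then applying Lemma \ref{rbdd} to $\Phi$ and $\Phi^{-1}$. The bookkeeping you flag (realizing the finite direct sum as a single $L^1\p{S_2,\ell^2(I_2)}$, and checking that $\Phi\otimes\id_B$ indeed produces the $B$-valued Hoeffding components) is handled correctly.
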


\begin{deff}Let $X\subset L^1(S)$ and $X\p{B}$ be generated by $X\otimes B$ in $L^1\p{S,B}$. The subspace $X$ is said to have Bourgain-Pisier-Xu (BPX) property if $\p{X\p{\ell^1},X\p{\ell^p}}$ is $K$-closed in $\p{L^1\p{S,\ell^1},L^1\p{S,\ell^p}}$ for some $1<p<\infty$. 
\end{deff}
The following is a result of Xu \cite[Proposition 11]{xul1h1}, based on pieces of reasoning by Bourgain \cite{bourgcotype} and Pisier \cite{pisiersimple}. 
\begin{thm}[Bourgain, Pisier, Xu]If $X\subset L^1$ has BPX property, then $L^1/X$ is of cotype 2 and every operator $L^1/X\to\ell^2$ is 1-summing. 
\end{thm}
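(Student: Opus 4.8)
The plan is to reproduce the argument of Bourgain \cite{bourgcotype}, Pisier \cite{pisiersimple} and Xu \cite{xul1h1}. Write $Q=L^1/X$ and let $q\colon L^1(S)\to Q$ be the quotient map. Both conclusions are obtained, as in \cite{xul1h1}, from a single interpolation estimate relating the $\ell^2$--valued quotient structure of $Q$ to its $\ell^1$-- and $\ell^p$--valued ones; the cotype $2$ part amounts to exhibiting a constant $C$ with
\beq \label{eq:planccotype}\p{\sum_k \n{q f_k}_Q^2}^\frac12\leq C\,\E\n{\sum_k \varepsilon_k\, q f_k}_Q\eeq
for every finite sequence $\p{f_k}$ in $L^1(S)$, $\p{\varepsilon_k}$ being Rademacher variables and the expectation being taken over them; the $1$--summing assertion is entirely parallel, so I concentrate on \eqref{eq:planccotype}.

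Two of the quantities in \eqref{eq:planccotype} are controlled for free. As $\ell^1$--sums commute with quotients, $\ell^1(Q)=L^1\p{S,\ell^1}/X\p{\ell^1}$ isometrically. Minkowski's integral inequality gives, for any $g_k\in X$,
\beq \p{\sum_k \n{q f_k}_Q^2}^\frac12\leq \p{\sum_k \p{\int_S \left|f_k+g_k\right|}^2}^\frac12\leq \int_S \sqrt{\sum_k \left|f_k+g_k\right|^2},\eeq
so, taking the infimum over $\p{g_k}$, $\p{\sum_k \n{q f_k}_Q^2}^{1/2}\leq \n{\p{q f_k}_k}_{L^1(S,\ell^2)/X(\ell^2)}$; and since the span of the Rademacher polynomials with coefficients in $X$ lies inside the closure $L^1\p{[0,1],X}$ of $L^1\p{[0,1]}\otimes X$, Khintchine--Kahane likewise gives $\E\n{\sum_k \varepsilon_k q f_k}_Q\lesssim \n{\p{q f_k}_k}_{L^1(S,\ell^2)/X(\ell^2)}$. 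Thus \eqref{eq:planccotype} is \emph{not} a consequence of these soft bounds: what is needed is the \emph{reverse} comparison $\n{\p{q f_k}_k}_{L^1(S,\ell^2)/X(\ell^2)}\lesssim \E\n{\sum_k \varepsilon_k q f_k}_Q$, and this is where the BPX hypothesis enters.

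To establish it I would interpolate at the level of the \emph{quotient} couple. First, a reiteration argument propagates the $K$--closedness of $\p{X(\ell^1),X(\ell^p)}$ in $\p{L^1(\ell^1),L^1(\ell^p)}$ to an exponent $p$ convenient for the computation, using that $\ell^2$ is a calculable real interpolation space between $\ell^1$ and $\ell^p$. Since $K$--closedness passes to the quotient couple, $\p{\ell^1(Q),\,L^1(S,\ell^p)/X(\ell^p)}_{\theta,r}$ coincides, up to equivalent norms, with the image under the quotient map of $\p{L^1(S,\ell^1),L^1(S,\ell^p)}_{\theta,r}$; evaluating the latter by the standard formulas for interpolation of $L^1$ of Lorentz--sequence--valued functions, choosing $\theta$ and $r$ so as to reach the $\ell^2$--structure, and passing back to Rademacher averages by Khintchine--Kahane in the Banach space $Q$, one obtains the missing inequality, from which \eqref{eq:planccotype} follows.

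The main obstacle is exactly this comparison. The $\ell^2$--structure relevant to cotype lives on the Rademacher subspace of $L^1$, which is \emph{not} complemented there, so the Rademacher projection is unavailable; the interpolation has to be run on the quotient couple, where it is the decomposition form of $K$--closedness --- not the boundedness of any projection --- that does the work (this is the essence of Bourgain's trick). A secondary technical point is the stability of $K$--closedness under the change of endpoint from the given $p$ to the one actually used, which requires a Wolff/reiteration--type argument. By contrast, the reduction in the first paragraph and the applications of Khintchine--Kahane and Minkowski's inequality are routine.
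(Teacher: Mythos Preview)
The paper does not prove this theorem at all: it is quoted as a known result, with a pointer to \cite[Proposition 11]{xul1h1} (building on \cite{bourgcotype} and \cite{pisiersimple}), so there is no in-paper argument to compare your sketch against.

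As an outline of the Bourgain--Pisier--Xu argument your proposal is recognisable --- the quotient identification $\ell^1(Q)=L^1(\ell^1)/X(\ell^1)$, the passage of $K$-closedness to the quotient couple, the reiteration/Wolff step to reach a convenient exponent, and the comparison with Rademacher averages all occur there. Where your sketch is genuinely thin is the step you flag as the ``reverse comparison'' $\n{(qf_k)}_{L^1(\ell^2)/X(\ell^2)}\lesssim \E\n{\sum_k\varepsilon_k qf_k}_Q$: saying this follows by ``evaluating the standard interpolation formulas and passing back via Khintchine--Kahane in $Q$'' hides the entire content. Khintchine--Kahane in $Q$ only changes the outer exponent on the Rademacher average; it does not let you replace an arbitrary correction $g(\varepsilon)\in X$ by a Rademacher sum $\sum_k\varepsilon_k g_k$ with $g_k\in X$, and the Rademacher projection that would do this is unbounded on $L^1$ (as you yourself note). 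The argument in \cite{xul1h1} bridges this gap through the interpolation identity on the quotient couple together with the endpoint maps $L^1(\ell^r)/X(\ell^r)\to\ell^r(Q)$, not by a direct Khintchine--Kahane step; your plan names the right ingredients but does not indicate how they are combined to produce the missing inequality.
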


\subsection{The case $m=2$} As previously, we single out $m=2$. 
\begin{thm}\label{kclv1l1}The couple \[\p{\V^1_1\p{\Omega^n,L^1(\mathbb{R})},\V^1_1\p{\Omega^n,L^2(\mathbb{R})}}\] is $K$-closed in \[\p{L^1\p{\Omega^n,L^1(\mathbb{R})},L^1\p{\Omega^n,L^2(\mathbb{R})}},\] with a constant independent of $n$. \end{thm}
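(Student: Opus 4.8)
The plan is to reduce the $K$-closedness statement to the decomposition theorem proved in the previous section. Recall that $K$-closedness of $\p{\V^1_1\p{\Omega^n,L^1},\V^1_1\p{\Omega^n,L^2}}$ in $\p{L^1\p{\Omega^n,L^1},L^1\p{\Omega^n,L^2}}$ means: for every $f\in\V^1_1\p{\Omega^n,L^1}+\V^1_1\p{\Omega^n,L^2}$ and every $t>0$, the $K$-functional computed inside the subspaces is controlled by the ambient $K$-functional up to a constant. Writing $f(x)=\sum_i f_i\p{x_i}$ with $f_i$ of mean zero and $L^1(\R)$- or $L^2(\R)$-valued, the ambient $K$-functional is, by Corollary \ref{decoupvectorval} applied with $M=1$ (equivalently the vector-valued Marcinkiewicz--Zygmund Lemma \ref{rbdd}), comparable to the interpolation-sum norm of $\coprod_i f_i$ treated as a function on $\overline\Omega=\coprod_i\Omega$ with values in $L^1(\R)+L^2(\R)$. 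So the whole problem becomes: given a function $g=\coprod_i f_i$ on $\overline\Omega\times\R$ with $\|g\|_{L^1\p{\overline\Omega\times\R}}+t\,\|g\|_{L^1\p{\overline\Omega,L^2(\R)}}$ small, produce a splitting $g=g_0+g_1$ with each piece of product/disjoint-support type and the two norms controlled — and then push the splitting back through the mean-zero projection $\p{\id-\E}^{\otimes\{i\}}$ on each coordinate, which is harmless since it is a contraction on all the relevant spaces and fixes $f_i$.

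The key steps, in order. First I would record the translation: by Lemma \ref{rbdd} (the $I_i$-singleton, Hilbert-valued Marcinkiewicz--Zygmund inequality) the norm of $f$ in $\V^1_1\p{\Omega^n,L^1(\R)}$ equals, up to constants, $\sum_i\int_\Omega\|f_i\|_{L^1(\R)}$, i.e.\ $\|\coprod_i f_i\|_{L^1\p{\overline\Omega,L^1(\R)}}$, and likewise for the $L^2(\R)$-valued endpoint, so that $K\p{f,t;\V^1_1(L^1),\V^1_1(L^2)}\simeq K\p{\coprod_i f_i,t;L^1\p{\overline\Omega,L^1(\R)},L^1\p{\overline\Omega,L^2(\R)}}$; the reverse $\gtrsim$ for the ambient couple is equally immediate from the same lemma. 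Second, I would invoke Theorem \ref{premainlemma} with $\Omega$ replaced by $\R$ equipped with a probability measure obtained from discretization (exactly the reduction used in the proof of Theorem \ref{JSch}), $I=\{1,\dots,n\}$, and $J$ indexing a discretization of $\R$: given a decomposition of $g$ into an $L^1\p{\overline\Omega\times\R}$ piece and an $L^2$ piece — that is, a near-optimal decomposition for the ambient $K$-functional — I invert the roles to extract a $\{0,1\}$-valued weight $w$ and apply the theorem (or rather its one-variable ancestor Theorem \ref{JSch}, which is what is actually needed here since $m=1$) to obtain the subspace-side decomposition with disjoint supports; the disjoint-support clause is what guarantees the two pieces are genuinely in $\V^1_1(L^1)$ and $\V^1_1(L^2)$ after applying the mean-zero projections coordinatewise. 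Third, I would take limits to pass from the finite discretized model back to $L^1(\R)$, $L^2(\R)$, and note that the constant produced is independent of $n$ because every invocation of Theorems \ref{JSch}/\ref{premainlemma} has $n$-independent constant.

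The main obstacle I anticipate is not the combinatorics but making sure the passage to the vector-valued (Bochner) setting is legitimate: Corollary \ref{decoupvectorval} and Lemma \ref{rbdd} are stated for Hilbert-valued functions, and $L^2(\R)$ is a Hilbert space, so that is fine; but the intermediate space $L^1(\R)$ is \emph{not} Hilbert, so I must be careful that the $K$-functional translation only ever uses the two genuine endpoints of the couple and that the disjoint-support decomposition of $\coprod_i f_i$, performed pointwise in the $\R$-variable, manifestly lands one summand in $L^1\p{\overline\Omega,L^1(\R)}$ and the other in $L^1\p{\overline\Omega,L^2(\R)}$ without any appeal to properties of $L^1(\R)$ as a target space. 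Concretely, the delicate point is that the square-function expression in Corollary \ref{decoupvectorval} is an $L^1\p{\overline\Omega}$ norm of the $\ell^2$-norm over $i$ of the $B$-norms; applying Theorem \ref{JSch} in the scalar variable $\overline\Omega$ with the role of ``$f_i$'' played by $\|f_i(\cdot)\|_B$ and then re-attaching the $B$-valued direction via the disjointness of supports is exactly the step that requires care, and it is the one I would write out in full.
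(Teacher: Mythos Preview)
Your first ``translation'' step is incorrect, and this derails the whole plan. You assert that
\[
\|f\|_{\V^1_1\p{\Omega^n,L^1(\R)}}\simeq\sum_i\int_\Omega\|f_i\|_{L^1(\R)}=\left\|\coprod_i f_i\right\|_{L^1\p{\overline\Omega,L^1(\R)}},
\]
but this is false: take $f_i=r_i\cdot\varphi$ with $r_i$ i.i.d.\ Rademachers and $\varphi\in L^1(\R)$ fixed; then the left side is $\simeq\sqrt n\,\|\varphi\|_{L^1}$ and the right side is $n\,\|\varphi\|_{L^1}$. Lemma~\ref{rbdd} does not apply since $L^1(\R)$ is not Hilbert, and even at the $L^2(\R)$ endpoint the square-function expression from Corollary~\ref{decoupvectorval} is $\int_{\Omega^n}\sqrt{\sum_i\|f_i(x_i)\|_{L^2}^2}\,\d x$, which by Theorem~\ref{JSch} is the $(L^1+L^2)\p{\overline\Omega}$ norm of $\coprod_i\|f_i\|_{L^2}$, not the $L^1\p{\overline\Omega}$ norm. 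Consequently your claimed $K$-functional identity $K\p{f,t;\V^1_1(L^1),\V^1_1(L^2)}\simeq K\p{\coprod_i f_i,t;L^1\p{\overline\Omega,L^1},L^1\p{\overline\Omega,L^2}}$ fails, and the problem does \emph{not} reduce to a one-variable Johnson--Schechtman decomposition.

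The paper's proof shows what actually happens: after a dilation in $\R$ reducing to $t=1$ and after cutting $\R$ into unit intervals indexed by $j$, both the ambient and the subspace $K$-functionals become equivalent to the genuinely \emph{two}-index expression
\[
\int_{[0,1]^\infty}\int_{\Omega^n}\sqrt{\sum_{i,j}\left|f_{i,j}\p{x_i,s_j}\right|^2}\,\d x\,\d s,
\]
the $i$'s coming from $\Omega^n$ and the $j$'s from $\R$. This is precisely the object handled by the \emph{two}-variable Theorem~\ref{4summand}, and all four summands $a,b,c,d$ are needed: $a$ and $d$ combine to form the $\V^1_1\p{L^1}$ piece (via the $L^1\p{\overline\Omega\times\R}$ and $L^1\p{\R,L^2(\overline\Omega)}$ norms respectively), while $b$ and $c$ combine to form the $\V^1_1\p{L^2}$ piece. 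Your belief that ``$m=1$'' means the one-variable Theorem~\ref{JSch} suffices misses that the $\R$-direction contributes a second independent index, so the relevant decomposition is the $(m{+}1)$-variable one.
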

\textit{Proof.} For $f\in L^0\p{\Omega^n\times \mathbb{R}}$ and $\alpha>0$, let $\p{f\circ \alpha}(x,s)= f(x,\alpha s)$. To shorten the notation, we will denote the underlying couples by $\p{\V^1_1\p{L^1},\V^1_1\p{L^2}}$ and $\p{L^1\p{L^1},L^1\p{L^2}}$. Since \[\n{f\circ\alpha}_{L^1\p{L^1}}=\alpha^{-1}\n{f}_{L^1\p{L^1}},\quad \n{f\circ\alpha}_{L^1\p{L^2}}=\alpha^{-\frac12}\n{f}_{L^1\p{L^2}},\] we have \begin{eqnarray*}K\p{f\circ t^{-2},t;L^1\p{L^1},L^1\p{L^2}}&=&\inf_{f\circ t^{-2}=\p{g\circ t^{-2}}+ \p{h\circ t^{-2}}}\n{g\circ t^{-2}}_{L^1\p{L^1}}+ t\n{h\circ t^{-2}}_{L^1\p{L^2}}\\ &=&\inf_{f=g+h}t^2\n{g}_{L^1\p{L^1}}+ t^2\n{h}_{L^1\p{L^2}}\\ &=& t^2 K\p{f,1;L^1\p{L^1},L^1\p{L^2}}.\end{eqnarray*}
Analogously \[K\p{f\circ t^{-2},t;\V^1_1\p{L^1},\V^1_1\p{L^2}}= t^2 K\p{f,1;\V^1_1\p{L^1},\V^1_1\p{L^2}}.\] Therefore we only have to prove \[K\p{f,1;\V^1_1\p{L^1},\V^1_1\p{L^2}}\lesssim K\p{f,1;L^1\p{L^1},L^1\p{L^2}}\] for $f\in \V^1_1\p{L^1}+\V^1_1\p{L^2}$. \par
For any such $f$, the decomposition $f(x)=\sum_i f_i\p{x_i}$ is unique, because $f_i=\E_i f$. Let $f_{i,j}:\Omega\times [0,1]\to\mathbb{R}$, $f_j:\Omega^n\times [0,1]\to\mathbb{R}$ be defined by \[f_{i,j}\p{x_i,s_j}= f_i\p{x_i,s_j+j-1}, \quad f_j\p{x,s_j}=f\p{x,s_j+j-1}.\] Then $f_j(x)=\sum_i f_{i,j}\p{x_i}$ and we can treat $f_i$ as $\coprod_j f_{i,j}$ and $f$ as $\coprod_j f_j$, by identification of $(0,\infty)$ with $\coprod_j [0,1]$. Using the trivial part of Theorem \ref{JSch} at each $x$ separately, we get
\begin{eqnarray*}
K\p{f,1;L^1\p{L^1},L^1\p{L^2}}&=&\inf_{f=g+h}\int_{\Omega^n}\n{g(x)}_{L^1}+\n{h(x)}_{L^2}\d x\\ 
&=&\int_{\Omega^n}\inf_{f(x)=g(x)+h(x)}\n{g(x)}_{L^1}+\n{h(x)}_{L^2}\d x\\
&=&\int_{\Omega^n}\n{\coprod_j f_j(x,\cdot)}_{\p{L^1+L^2}(0,\infty)}\d x\\ 
&\gtrsim&\int_{\Omega^n} \int_{[0,1]^\infty}\sqrt{\sum_j f_j\p{x,s_j}^2}\d s\d x\\ 
&=&\int_{[0,1]^\infty} \int_{\Omega^n} \sqrt{\sum_j \p{\sum_i f_{i,j}\p{x_i,s_j}}^2}\d x\d s\\ 
&\simeq& \int_{[0,1]^\infty} \int_{\Omega^n} \sqrt{\sum_{i,j}f_{i,j}\p{x_i,s_j}^2}\d x\d s,
\end{eqnarray*}
where the last equivalence is Lemma \ref{marzyglp} applied at each $s$ to the sequence of $\ell^2$-valued independent mean $0$ functions $ \p{f_{i,j}\p{\cdot,s_j}}_j$. Let \[f_{i,j}=a_{i,j}+b_{i,j}+c_{i,j}+d_{i,j}\] be the decomposition given by Theorem \ref{4summand}. We can ensure that $a_{i,j}, b_{i,j},c_{i,j},d_{i,j}$ are of mean $0$ in the first variable, because $f_{i,j}$ are and subtracting the underlying conditional expectation preserves \eqref{eq:4suma}-\eqref{eq:4sumd}. Let \[a_i=\coprod_j a_{i,j},\quad b_{i}=\coprod_j b_{i,j},\quad c_{i}= \coprod_j c_{i,j},\quad d_i=\coprod_j d_{i,j},\] 
We have
\begin{eqnarray*}
\n{\sum_i a_i}_{L^1\p{L^1}}&=& \int_{\Omega^n} \int_{(0,\infty)}\left|\sum_i a_i\p{x_i,s}\right|\d s\d x\\
&\leq &\int_{(0,\infty)}\sum_i\int_\Omega \left|a_i\p{x_i,s}\right|\d x_i\d s\\ 
&=& \n{\coprod_i a_i}_{L^1\p{\coprod_i\Omega\times (0,\infty)}},
\end{eqnarray*}
\begin{eqnarray*} \n{\sum_i d_i}_{L^1\p{L^1}}&=& \int_{\Omega^n}\int_{(0,\infty)} \left|\sum_i d_i\p{x_i,s}\right|\d s\d x\\
&=&\int_{(0,\infty)} \int_{\Omega^n}\left|\sum_i d_i\p{x_i,s}\right|\d x\d s\\
&\simeq&\int_{(0,\infty)} \int_{\Omega^n}\sqrt{\sum_i d_i\p{x_i,s}^2\d x}\d s\\
&\leq &\int_{(0,\infty)} \sqrt{\sum_i\int_{\Omega} d_i\p{x_i,s}^2\d x_i}\d s\\
&=& \n{\coprod_i d_i}_{L^1\p{(0,\infty),L^2\p{\coprod_i\Omega}}},
\end{eqnarray*}
\begin{eqnarray*}\n{\sum_i c_i}_{L^1\p{L^2}}&=& \int_{\Omega^n}\n{\sum_i c_i\p{x_i}}_{L^2}\d x\\
&\simeq& \int_{\Omega^n}\n{\sqrt{\sum_i c_i\p{x_i}^2}}_{L^2}\d x\\
&=&\int_{\Omega^n}\sqrt{\int_{(0,\infty)}\sum_i c_i\p{x_i,s}^2\d s}\d x\\
&\leq & \int_{\Omega^n}\sum_i\sqrt{\int_{(0,\infty)} c_i\p{x_i,s}^2\d s}\d x\\
&=&\sum_i \int_{\Omega}\sqrt{\int_{(0,\infty)} c_i\p{x_i,s}^2\d s}\d x_i\\
&=&\n{\coprod_i c_i}_{L^1\p{\coprod_i\Omega,L^2(0,\infty)}},
\end{eqnarray*}
\begin{eqnarray*}\n{\sum_i b_i}_{L^1\p{L^2}}&=&\int_{\Omega^n}\n{\sum_i b_i\p{x_i}}_{L^2}\d x\\
&\simeq& \int_{\Omega^n}\n{\sqrt{\sum_i b_i\p{x_i}^2}}_{L^2}\d x\\
&\leq & \sqrt{\int_{\Omega^n}\n{\sqrt{\sum_i b_i\p{x_i}^2}}_{L^2}^2\d x}\\
&=& \sqrt{\int_{\Omega^n}\int_{(0,\infty)}\sum_i b_i\p{x_i,s}^2\d s\d x}\\
&=& \n{\coprod_i b_i}_{L^2\p{\coprod_i\Omega\times (0,\infty)}}.
\end{eqnarray*}
Ultimately, we put \[g_i= a_{i}+ d_{i},\quad h_{i}= c_{i}+b_{i}.\] By definition of $a_i,b_i,c_i,d_i$, we have $f(x)=\sum_i g_i\p{x_i}+\sum_i h_i\p{x_i}$. Combining the above inequalities,
\begin{eqnarray*}\lefteqn{ K\p{f,1;\V^1_1\p{L^1},\V^1_1\p{L^2}}}\\ 
&\leq& \n{\sum_i g_i}_{L^1\p{L^1}}+ \n{\sum_i h_i}_{L^1\p{L^2}}\\
&\leq & \n{\sum_i a_i}_{L^1\p{L^1}}+ \n{\sum_i d_i}_{L^1\p{L^1}}+ \n{\sum_i c_i}_{L^1\p{L^2}}+ \n{\sum_i b_i}_{L^1\p{L^2}}\\
&\lesssim& \n{\coprod_i a_i}_{L^1\p{\coprod_i\Omega\times (0,\infty)}}+ \n{\coprod_i d_i}_{L^1\p{(0,\infty),L^2\p{\coprod_i\Omega}}}+\\
&\ & \n{\coprod_i c_i}_{L^1\p{\coprod_i\Omega,L^2(0,\infty)}}+ \n{\coprod_i b_i}_{L^2\p{\coprod_i\Omega\times (0,\infty)}}\\
&\lesssim& \int_{[0,1]^\infty} \int_{\Omega^n} \sqrt{\sum_{i,j}f_{i,j}\p{x_i,s_j}^2}\d x\d s\\
&\lesssim& K\p{f,1;L^1\p{L^1},L^1\p{L^2}}
\end{eqnarray*}
as desired.\qed\par
\subsection{The general case} We are now prepared for the proof of the following, which by \cite{xul1h1}, is going to imply that $L^1\p{\Omega^\infty}/\V^1_{\leq M}\p{\Omega^\infty}$ is of cotype 2. In this subsection, $\lesssim,\gtrsim,\simeq$ are understood to depend on $M$ or $m$. 
\begin{thm}\label{kclvml1}The couple \[\p{\V^1_{\leq M}\p{\Omega^n,L^1(\mathbb{R})},\V^1_{\leq M}\p{\Omega^n,L^2(\mathbb{R})}}\] is $K$-closed in \[\p{L^1\p{\Omega^n,L^1(\mathbb{R})},L^1\p{\Omega^n,L^2(\mathbb{R})}},\] with a constant independent of $n$. \end{thm}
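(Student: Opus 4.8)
The plan is to run the proof of Theorem~\ref{kclv1l1} in $m$ variables, replacing the bivariate Theorem~\ref{4summand} by the general decomposition Theorem~\ref{2msummand} (applied with exponent $p=2$) and the one-variable Johnson--Schechtman square function by the Hoeffding square function estimates of Corollaries~\ref{sqfndecoup} and~\ref{decoupvectorval}. Abbreviate $L^1(L^q)=L^1\p{\Omega^n,L^q(\R)}$ and $\V^1_{\leq M}(L^q)=\V^1_{\leq M}\p{\Omega^n,L^q(\R)}$. First, exactly as in the proof of Theorem~\ref{kclv1l1}, the dilations $f\mapsto f\circ t^{-2}$ rescale $K(\cdot,t)$ by $t^2$ for both couples (scaling the $\R$-variable does not touch the $\Omega^n$-structure, so $\V^1_{\leq M}(L^q)$ is preserved), so it is enough to prove $K\p{f,1;\V^1_{\leq M}(L^1),\V^1_{\leq M}(L^2)}\lesssim_M K\p{f,1;L^1(L^1),L^1(L^2)}$ for $f\in\V^1_{\leq M}(L^1)+\V^1_{\leq M}(L^2)$. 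Since each $P_A$ is a finite combination of conditional expectations $\E_B$, it is bounded on every $L^1\p{\Omega^n,B}$ and $\sum_{|A|\leq M}P_A$ restricts to the identity on $\V^1_{\leq M}(B)$, so $f=\sum_{m=0}^M\sum_{|A|=m}P_Af$. Identifying the $L^q(\R)$-valued $f$ with $\coprod_k f_k$ via $f_k(x,s_k)=f(x,s_k+k-1)$ and setting $f_{A,k}(x_A,s_k)=\p{P_Af}(x,s_k+k-1)$, we have $f_k=\sum_m\sum_{|A|=m}f_{A,k}$.

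Next I would bound $K\p{f,1;L^1(L^1),L^1(L^2)}$ below by decoupled square functions. Decomposing pointwise in $x$ gives $K\p{f,1;L^1(L^1),L^1(L^2)}=\int_{\Omega^n}\n{f(x,\cdot)}_{(L^1+L^2)(\R)}\d x$; applying Theorem~\ref{JSch} with exponent $2$ to $\p{f_k(x,\cdot)}_k$ at each $x$, then Corollary~\ref{decoupvectorval} with $B=\ell^2$ at each fixed $s$ to the $\V^1_{\leq M}\p{\Omega^n,\ell^2}$-valued map $x\mapsto\p{f_k(x,s_k)}_k$ (whose $A$-component is $\p{f_{A,k}(x_A,s_k)}_k$), and integrating in $s$, one obtains
\[ K\p{f,1;L^1(L^1),L^1(L^2)}\gtrsim_M \sum_{m=0}^M \int_{[0,1]^\infty}\int_{(\Omega^n)^m}\sqrt{\sum_{i_1<\ldots<i_m,\,k} f_{\{i_1,\ldots,i_m\},k}\p{x^{(1)}_{i_1},\ldots,x^{(m)}_{i_m},s_k}^2}\,\d x^{(1)}\cdots\d x^{(m)}\,\d s. \]
For each $m$ the $m$-th summand is an order-$(m+1)$ quantity of the shape treated in Theorem~\ref{2msummand} (the first $m$ coordinates over $\Omega$ with indices $i_1<\ldots<i_m$, the last over $[0,1]$ with index $k$; after the routine discretization and truncation in $k$), and I would apply Theorem~\ref{2msummand} with exponent $2$ to produce disjointly supported pieces $f_{A,k}=\sum_{J\subseteq[1,m+1]}a^{(J)}_{A,k}$ whose mixed norms --- the $J$-th being, as in Theorem~\ref{2msummand}, an $L^1$-norm over the disjoint unions of the coordinates outside $J$ of an $L^2$-norm over those in $J$ --- sum over $J$ to a constant times the $m$-th summand above.

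The final and most delicate step is recoupling. I would first apply $P_A=(\id-\E)^{\otimes A}$ in the first $m$ variables to each $a^{(J)}_{A,k}$: this restores mean zero in those variables, costs at most a factor $2^m$ in the mixed norms, and preserves $f_{A,k}=\sum_J P_Aa^{(J)}_{A,k}$ because $P_Af_{A,k}=f_{A,k}$. Then for each $m$ and $J$ I set $\alpha^{(m,J)}=\coprod_k\sum_{|A|=m}\p{P_Aa^{(J)}_{A,k}}(x_A,\cdot)\in\V^1_m\p{\Omega^n,L^q(\R)}$, so that $f=g+h$ with $g=\sum_m\sum_{J\not\ni m+1}\alpha^{(m,J)}\in\V^1_{\leq M}(L^1)$ and $h=\sum_m\sum_{J\ni m+1}\alpha^{(m,J)}\in\V^1_{\leq M}(L^2)$; the routing rule is that whether the distinguished coordinate $m+1$ (the interval index) lies in $J$ decides whether the piece is controlled by an $L^2(\R)$- or an $L^1(\R)$-norm. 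To see this, for $m+1\notin J$ write $\n{\alpha^{(m,J)}}_{L^1(\Omega^n,L^1(\R))}=\int_\R\n{\alpha^{(m,J)}(\cdot,s)}_{L^1(\Omega^n)}\d s$: Corollary~\ref{sqfndecoup} turns $\n{\cdot}_{L^1(\Omega^n)}$ at each $s$ into the decoupled order-$m$ square function, which by the elementary inequalities \eqref{eq:mspltrivjen}--\eqref{eq:trivdirl1l2} is dominated by the mixed $L^1$--$L^2$ norm of the corresponding block, and integrating in $s$ recovers exactly the Theorem~\ref{2msummand}-bound for that $J$. For $m+1\in J$ one argues identically with $\n{\alpha^{(m,J)}}_{L^1(\Omega^n,L^2(\R))}$, using the $L^2(\R)$-valued form of Corollary~\ref{decoupvectorval} and the Hilbert-space-valued analogue of \eqref{eq:mspltrivjen}--\eqref{eq:trivdirl1l2} (the proof of which is unchanged). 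Summing over $m\leq M$ and $J$ yields $\n{g}_{\V^1_{\leq M}(L^1)}+\n{h}_{\V^1_{\leq M}(L^2)}\lesssim_M K\p{f,1;L^1(L^1),L^1(L^2)}$, hence the desired inequality. I expect this recoupling paragraph to be the real obstacle --- correctly dispatching each of the $2^{m+1}$ pieces to the right half of the target couple and checking that the mean-zero structure survives --- since all the genuinely analytic content is already packaged into Theorems~\ref{2msummand} and~\ref{premainlemma} and Corollary~\ref{decoupvectorval}.
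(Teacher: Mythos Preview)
Your proposal is correct and follows essentially the same route as the paper's proof: scaling to $t=1$, bounding the ambient $K$-functional below by the decoupled Hoeffding square function via Theorem~\ref{JSch} and Corollary~\ref{decoupvectorval}, applying Theorem~\ref{2msummand} in $m+1$ variables for each $m$, restoring the mean-zero structure with $P_A=(\id-\E)^{\otimes A}$, and routing each piece to $L^1(L^1)$ or $L^1(L^2)$ according to whether the interval index $m+1$ lies in $J$. The paper's only cosmetic difference is that it writes the $2^{m+1}$ pieces as two families $a^{(J)},b^{(J)}$ indexed by $J\subset[1,m]$ (according to your routing rule) rather than a single family indexed by $J\subset[1,m+1]$, and it explicitly isolates a single $m$ using that the $K$-functional is a norm; your summation over $m$ is equivalent.
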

\textit{Proof.} Let us take $f\in \V^1_{\leq M}\p{\Omega^n,\p{L^1\p{\R}}}+\V^1_{\leq M}\p{\Omega^n,L^2\p{\R}}$ with a decomposition 
\[f(x)=\sum_{0\leq m\leq M}\sum_{i_1<\ldots<i_m}f_{i_1,\ldots,i_m}\p{x_{i_1},\ldots,x_{i_m}},\]
where 
\beq  \label{eq:fiinvmOm} f_{i_1,\ldots,i_m}\in \V^1_m\p{\Omega^m,L^1\p{\R}}+ \V^1_m\p{\Omega^m,L^2\p{\R}}.\eeq
By density argument, we may assume that $f_{i_1,\ldots,i_m}(x)$ vanishes outside of $[-N,N]$, which allows $f_{i_1,\ldots,i_m}$ to formally be treated as an element of $\V^1_m\p{\Omega^n,L^1\p{\R}}$. Just as in the proof of Theorem \ref{kclv1l1}, we reduce the problem to 
\beq K\p{f,1;\V^1_{\leq M}\p{L^1},\V^1_{\leq M}\p{L^2}}\lesssim K\p{f,1;L^1\p{L^1},L^1\p{L^2}}.\eeq 
Again, we denote the restriction of $f$ to $[k,k+1]$ in the variable running through $\R$ by $f_k$ and an analogous restriction of $f_i$ by $f_{i,k}$. As previously, we calculate the right hand side in preparation to use Theorem \ref{2msummand}.
\begin{eqnarray*}
\lefteqn{K\p{f,1;L^1\p{\Omega^n,L^1\p{\R}},L^1\p{\Omega^n,L^2\p{\R}}}}\\
&=&\inf_{f=g+h}\int_{\Omega^n}\n{g(x)}_{L^1\p{\R}}+\n{h(x)}_{L^2\p{\R}}\d x\\ 
&=&\int_{\Omega^n}\inf_{f(x)=g(x)+h(x)}\n{g(x)}_{L^1\p{\R}}+\n{h(x)}_{L^2\p{\R}}\d x\\
&=&\int_{\Omega^n}\n{\coprod_{k\in \Z} f_k(x)}_{\p{L^1+L^2}\p{\R}}\d x\\ 
&\geq& \int_{\Omega^n} \int_{[0,1]^\infty}\sqrt{\sum_k f_k\p{x,s_k}^2}\d s\d x\\ 
&=&\int_{[0,1]^\infty} \int_{\Omega^n} \left\|\p{f_k\p{x,s_k}}_{k\in\Z}\right\|_{\ell^2(\Z)}\d x\d s\\ 
&\simeq& \int_{[0,1]^\infty} \sum_{m\leq M} \int_{\Omega^{nm}}\sqrt{\sum_{i_1<\ldots<i_m}\left\|\p{f_{i,k}\p{x^{(\leq m)}_{i_{\leq m}},s_k}}_{k\in\Z} \right\|^2_{\ell^2(\Z)}}\d x^{(\leq m)}\d s\\ 
&=&\sum_{m\leq M}\int_{[0,1]^\infty} \int_{\Omega^{nm}} \sqrt{\sum_{i_1<\ldots<i_m}\sum_k f_{i,k}\p{x^{(\leq m)}_{i_{\leq m}},s_k}^2}\d x^{(\leq m)}\d s.
\end{eqnarray*}
Here, the `$\simeq$' inequality is an application of Corollary \ref{decoupvectorval} at every $\p{s_k}_{k\in Z}$. Since the $K$-functional 
\beq K\p{\cdot,1;\V^1_1\p{\Omega^n,L^1},\V^1_1\p{\Omega^n,L^2}}\eeq is a norm, we can without loss of generality fix $m$, assume that
\[f\p{x_1,\ldots,x_n}=\sum_{i_1<\ldots<i_m}f_{i_1,\ldots,i_m}\p{x_{i_1},\ldots,x_{i_m}}\] and aim at proving 
\begin{eqnarray}\label{eq:willbeenoughforkcl} K\p{f,1;\V^1_1\p{\Omega^n,L^1},\V^1_1\p{\Omega^n,L^2}} \lesssim\\ 
\nonumber\int_{[0,1]^\infty} \int_{\Omega^{nm}} \sqrt{\sum_{i_1<\ldots<i_m}\sum_k f_{i,k}\p{x^{(\leq m)}_{i_{\leq m}},s_k}^2}\d x^{(\leq m)}\d s.\end{eqnarray}
If we treat $s$ as the $m+1$-th set of variables, Theorem \ref{2msummand} applied to the sequence $\p{f_{i,k}}_{(i,k)\in [1,n]^m\times \Z}$ of functions in $L^1\p{\Omega^m\times [0,1]}$ (we take $f_{i,k}=0$ unless $i_1<\ldots<i_m$) gives a decomposition
\beq\label{eq:fintoab} f_{i,k}=\sum_{J\subset [1,m]}a_{i,k}^{(J)}+ \sum_{J\subset [1,m]}b_{i,k}^{(J)}\eeq
such that 
\begin{eqnarray} \label{eq:aplusbleqf}\sum_{J\subset [1,m]}\left\|\coprod_{i,k}a_{i,k}^{(J)}\right\|_{L^1\p{\ol{\Omega}^{[1,m]\setminus J}\times \R,L^2\p{\ol{\Omega}^J}}}+ \sum_{J\subset [1,m]}\left\|\coprod_{i,k}b_{i,k}^{(J)}\right\|_{L^1\p{\ol{\Omega}^{[1,m]\setminus J},L^2\p{\ol{\Omega}^J\times \R}}} \\
\nonumber \lesssim \int_{[0,1]^\infty} \int_{\Omega^{nm}} \sqrt{\sum_{i_1<\ldots<i_m}\sum_k f_{i,k}\p{x^{(\leq m)}_{i_{\leq m}},s_k}^2}\d x^{(\leq m)}\d s,\end{eqnarray}
where $a_{i,k}^{(J)}$ correspond to subsets of $[1,m+1]$ not containing $m+1$ and $b_{i,k}^{(J)}$ correspond to subsets of $[1,m+1]$ containing $m+1$. Just as remarked after the formulation of Theorem \ref{4summand}, we can modify $a_{i,k}^{(J)}$, $b_{i,k}^{(J)}$ by setting 
\beq \label{eq:lotsof0s} a_{i,k}^{(J)}=b_{i,k}^{(J)}=0\text{ unless }i_1<\ldots<i_m\eeq
and replacing $a_{i,k}^{(J)}$ by $\p{P_m\otimes\id}a_{i,k}^{(J)}$ and $b_{i,k}^{(J)}$ by $\p{P_m\otimes\id}b_{i,k}^{(J)}$, where $P_m$ acts on the first $m$ variables and $\id$ acts on the last. This operation is legal, because $P_m=\p{\id-\E}^{\otimes m}$ is a finite combination of conditional expectations. Moreover, by \eqref{eq:fiinvmOm} and \eqref{eq:fintoab} it produces a decomposition of $f_{i,k}$ into summands that are in $\V^1_m$ with respect to the first $m$ variables. Also, due to boundedness of $P_m$ in all mixed $L^1\p{L^2}$ norms, the left hand side of \eqref{eq:aplusbleqf} gets smaller up to a constant. Therefore, we can without loss of generality assume \eqref{eq:lotsof0s} and 
\beq \label{eq:abafterpm}a_{i,k}^{(J)}\p{\cdot,s_k},b_{i,k}^{(J)}\p{\cdot,s_k}\in \V^1_m\p{\Omega^m}\text{ for all }i,k,J,s_k,\eeq
making them suitable for forming 
\[ g_k\p{x_1,\ldots,x_n,s_k}= \sum_J \sum_{i_1<\ldots<i_m} a_{i,k}^{(J)}\p{x_{i_1},\ldots,x_{i_m},s_k},\]
\[ h_k\p{x_1,\ldots,x_n,s_k}= \sum_J \sum_{i_1<\ldots<i_m} b_{i,k}^{(J)}\p{x_{i_1},\ldots,x_{i_m},s_k},\]
both of which are in $\V^1_m$ with respect to the first $n$ variables thanks to \eqref{eq:abafterpm} and satisfy 
\beq \label{eq:finalfgh}\coprod_k g_k+\coprod_k h_k=\coprod_k f_k=f\eeq
because of \eqref{eq:fintoab}. Now, making use of Corollary \ref{sqfndecoup} in \eqref{eq:usedecoupl1val} and \eqref{eq:trivdirl1l2} in \eqref{eq:usetrivforg},
\begin{eqnarray} \lefteqn{\nonumber\left\|\coprod_k g_k\right\|_{\V^1_m\p{\Omega^n,L^1\p{\R}}}= }\\
\nonumber&=& \int_{\Omega^n}\left\|\coprod_k\sum_J\sum_i a_{i,k}^{(J)}\p{x_{i_1},\ldots,x_{i_m}}\right\|_{L^1\p{\R}}\d x\\
\nonumber&\leq& \sum_J \int_{\Omega^n}\left\|\coprod_k\sum_i a_{i,k}^{(J)}\p{x_{i_1},\ldots,x_{i_m}}\right\|_{L^1\p{\R}}\d x\\
\nonumber&=&\sum_J \int_{\R}\d \ol{s} \int_{\Omega^n}\d x \left|\sum_i\coprod_k a_{i,k}^{(J)}\p{x_{i_1},\ldots,x_{i_m},\ol{s}}\right|\\
&\simeq&\label{eq:usedecoupl1val} \sum_J \int_{\R}\d \ol{s} \int_{\Omega^{nm}}\d x^{(\leq m)} \sqrt{\sum_i \p{\coprod_k a_{i,k}^{(J)}\p{x_{i_{1}}^{(1)},\ldots,x_{i_m}^{(m)},\ol{s}}}^2}\\
&\leq & \label{eq:usetrivforg} \sum_J \int_{\R}\d\ol{s}\int_{\ol{\Omega}^{J'}} \sqrt{\int_{\ol{\Omega}^J}\coprod_{i,k} a_{i,k}^{(J)}\p{\ol{x}^{(1)},\ldots\ol{x}^{(m)},\ol{s}}^2\d \ol{x}^{(J)}}\d \ol{x}^{(J')}\\
\nonumber&=& \sum_{J\subset [1,m]}\left\|\coprod_{i,k}a_{i,k}^{(J)}\right\|_{L^1\p{\ol{\Omega}^{J'}\times \R,L^2\p{\ol{\Omega}^J}}}.
\end{eqnarray}
Similarly, using Corollary \ref{decoupvectorval} in \eqref{eq:usedecoupl2val} and \eqref{eq:trivdirl1l2} in \eqref{eq:usetrivforh},
\begin{eqnarray} \lefteqn{\nonumber\left\|\coprod_k h_k\right\|_{\V^1_m\p{\Omega^n,L^2\p{\R}}}= }\\
\nonumber&=& \int_{\Omega^n}\left\|\coprod_k\sum_J\sum_i b_{i,k}^{(J)}\p{x_{i_1},\ldots,x_{i_m}}\right\|_{L^2\p{\R}}\d x\\
\nonumber&\leq& \sum_J \int_{\Omega^n}\left\|\coprod_k\sum_i b_{i,k}^{(J)}\p{x_{i_1},\ldots,x_{i_m}}\right\|_{L^2\p{\R}}\d x\\
&\simeq&\label{eq:usedecoupl2val} \sum_J \int_{\Omega^{nm}}\d x^{(\leq m)} \sqrt{\sum_i \left\|\coprod_k b_{i,k}^{(J)}\p{x_{i_{1}}^{(1)},\ldots,x_{i_m}^{(m)}}\right\|_{L^2\p{\R}}^2}\\
&\leq & \label{eq:usetrivforh} \sum_J \int_{\ol{\Omega}^{J'}} \sqrt{\int_{\ol{\Omega}^J}\left\|\coprod_{i,k} b_{i,k}^{(J)}\p{\ol{x}^{(1)},\ldots,\ol{x}^{(m)}}\right\|_{L^2(\R)}^2\d \ol{x}^{(J)}}\d \ol{x}^{(J')}\\
&=&\nonumber \sum_J \int_{\ol{\Omega}^{J'}} \sqrt{\int_{\ol{\Omega}^J}\int_{\R}\coprod_{i,k} b_{i,k}^{(J)}\p{\ol{x}^{(1)},\ldots,\ol{x}^{(m)},\ol{s}}^2\d\ol{s}\d \ol{x}^{(J)}}\d \ol{x}^{(J')}\\
\nonumber&=& \sum_{J\subset [1,m]}\left\|\coprod_{i,k}b_{i,k}^{(J)}\right\|_{L^1\p{\ol{\Omega}^{J'},L^2\p{\ol{\Omega}^J\times \R}}}.
\end{eqnarray}
Summing up the last two inequalities and connecting them with \eqref{eq:aplusbleqf}, we see that \eqref{eq:finalfgh} defines a decomposition of $f$ verifying \eqref{eq:willbeenoughforkcl}, which ends the proof. \qed
\section{Interpolation of $\V^p_m\p{\Omega^\infty,L^p(\R)}$}

For $m\geq 2$, the projection $P_m$ is not a Calder\'on--Zygmund operator because its norm on $L^p$ behaves as $\p{\frac{p}{\log p}}^2$ for $p\to \infty$ and we know of no way to represent $P_1$ as such an operator. Nonetheless, we are able to show that Bourgain's result \cite{bourginterp} about $K$-closedness of an image of a C-Z projection in $\p{L^1,L^2}$ holds for $P_m$ as well. Here, we present only the case of $m=1$, while for the general $m$ the proof is, as previously, analogous but more technical. 
\begin{thm}\label{bourgu1m}The couple 
\beq \p{\V^1_1\p{\Omega^\infty,L^1\p{\R}},\V^2_1\p{\Omega^\infty,L^2\p{\R}}}\eeq
is $K$-closed in 
\beq \p{L^1\p{\Omega^\infty,L^1\p{\R}},L^2\p{\Omega^\infty,L^2\p{\R}}}.\eeq
\end{thm}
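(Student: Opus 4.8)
The plan is to run the argument of Theorem \ref{kclv1l1}, the one genuinely new point being the lower estimate for the $K$-functional of the ambient couple, whose endpoints — unlike those of $\p{L^1\p{L^1},L^1\p{L^2}}$ — do not share a common $L^1\p{\Omega^\infty,\cdot}$ factor. As in Theorem \ref{kclv1l1}, dilation $\p{f\circ\alpha}\p{x,s}=f\p{x,\alpha s}$ gives $\n{f\circ\alpha}_{L^1\p{L^1}}=\alpha^{-1}\n{f}_{L^1\p{L^1}}$ and $\n{f\circ\alpha}_{L^2\p{L^2}}=\alpha^{-1/2}\n{f}_{L^2\p{L^2}}$, hence $K\p{f\circ t^{-2},t;\cdot,\cdot}=t^2K\p{f,1;\cdot,\cdot}$ for both couples (the subspaces scale the same way), so it suffices to prove $K\p{f,1;\V^1_1\p{L^1},\V^2_1\p{L^2}}\lesssim K\p{f,1;L^1\p{L^1},L^2\p{L^2}}$. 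By density we assume $\Omega$ finite and $f$ supported in $\Omega^n\times[-N,N]$, and write $f\p{x}=\sum_i f_i\p{x_i}$ with $f_i=\E_i f$; put $\ol\Omega=\coprod_i\Omega$ and $\ol f=\coprod_i f_i$ on $\ol\Omega\times\R$.

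Unwinding the Marcinkiewicz--Zygmund equivalences (Lemma \ref{marzyglp} and Corollary \ref{1varmzmz}), on $\V^1_1$-functions $\n{\cdot}_{L^1\p{\Omega^\infty,L^1\p{\R}}}$ corresponds, under $\tilde g\mapsto\coprod_i\tilde g_i$, to the norm of the interpolation sum $L^1\p{\R,L^2\p{\ol\Omega}}+L^1\p{\ol\Omega\times\R}$, while $\n{\cdot}_{L^2\p{\Omega^\infty,L^2\p{\R}}}$ corresponds to $\n{\cdot}_{L^2\p{\ol\Omega\times\R}}$; combining these via the triangle inequality, Cauchy--Schwarz on the probability space $\Omega^\infty$, and orthogonality of the $P_{\{i\}}$ gives
\[K\p{f,1;\V^1_1\p{L^1},\V^2_1\p{L^2}}\lesssim\n{\ol f}_{L^1\p{\ol\Omega\times\R}+L^1\p{\R,L^2\p{\ol\Omega}}+L^2\p{\ol\Omega\times\R}}.\]
Thus it is enough to produce, for one ambient decomposition $f=g_0+h_0$ realizing the right-hand $K$-functional (up to a constant), a splitting $\ol f=a+b+c$ with $\n{a}_{L^1\p{\ol\Omega\times\R}}+\n{b}_{L^1\p{\R,L^2\p{\ol\Omega}}}+\n{c}_{L^2\p{\ol\Omega\times\R}}\lesssim\n{g_0}_{L^1\p{\Omega^\infty\times\R}}+\n{h_0}_{L^2\p{\Omega^\infty\times\R}}$.

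Subtracting the (cheap) zeroth Hoeffding chaos, we may assume $g_0,h_0$ have none. Since $P_1\otimes\id$ is a contraction on $L^2\p{\Omega^\infty,L^2\p{\R}}$ and $\p{P_1\otimes\id}f=f$, we take $c:=\coprod_i P_{\{i\}}h_0$, so $\n{c}_{L^2\p{\ol\Omega\times\R}}\le\n{h_0}_{L^2\p{\Omega^\infty\times\R}}$ and $\ol f-c=\coprod_i P_{\{i\}}g_0$. Splitting $\coprod_i P_{\{i\}}g_0$ into an $L^1\p{\ol\Omega\times\R}$-part $a$ and an $L^1\p{\R,L^2\p{\ol\Omega}}$-part $b$ of total norm $\lesssim\n{g_0}_{L^1\p{\Omega^\infty\times\R}}$ cannot be done by estimating $P_1$ directly, as $P_1$ is unbounded on $L^1$; this is where the weighted inequality of Section \ref{secweighted} is indispensable. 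Applying Theorem \ref{JSch} fibrewise in $\Omega^\infty$ to $g_0\p{x,\cdot}$ yields $\{0,1\}$-valued weights recording where the fibrewise splitting puts mass on the $L^1$- versus the $L^2$-side, and Theorem \ref{premainlemma} then propagates this splitting through the first-chaos projection — precisely the mechanism producing the four summands in Theorem \ref{4summand} — delivering $a,b$. With $\ol f=a+b+c$ in hand, set $\tilde g=\sum_i\p{a_i+b_i}\p{x_i}$ and $\tilde h=\sum_i c_i\p{x_i}$, each replaced by $\p{\id-\E}$ of itself in every coordinate to restore mean zero without enlarging the mixed norms; then $f=\tilde g+\tilde h$ with $\tilde g\in\V^1_1\p{\Omega^\infty,L^1\p{\R}}$, $\tilde h\in\V^2_1\p{\Omega^\infty,L^2\p{\R}}$, and as in Theorem \ref{kclv1l1} one checks $\n{\tilde g}_{L^1\p{\Omega^\infty\times\R}}\lesssim\n{a}_{L^1\p{\ol\Omega\times\R}}+\n{b}_{L^1\p{\R,L^2\p{\ol\Omega}}}$ and $\n{\tilde h}_{L^2\p{\Omega^\infty\times\R}}=\n{c}_{L^2\p{\ol\Omega\times\R}}$, which finishes the proof.

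I expect the construction of $a$ and $b$ — i.e. the lower bound for $K\p{f,1;L^1\p{L^1},L^2\p{L^2}}$, equivalently pushing the fibrewise Johnson--Schechtman splitting of the $L^1$-summand past $P_1$ — to be the main obstacle. In Theorem \ref{kclv1l1} the ambient $K$-functional is literally $\int_{\Omega^\infty}K\p{f\p{x,\cdot},1;L^1\p{\R},L^2\p{\R}}\,dx$, so the ambient splitting can be taken fibrewise in $\Omega^\infty$ and already lands in $\V^1_1$ after averaging; here no such localization is available, the optimal ambient splitting (a truncation of $\sum_i f_i$) leaves $\V^1_1$, and its naive projection onto the first chaos destroys the $L^1$-control — and Theorem \ref{premainlemma} is exactly the tool introduced to get around this. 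The general-$m$ version of the theorem follows along the same lines, with Theorem \ref{2msummand} in place of Theorem \ref{4summand}.
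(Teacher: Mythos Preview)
Your reduction to $t=1$ and your identification of the target three-term norm are correct: indeed $K\p{f,1;\V^1_1\p{L^1},\V^2_1\p{L^2}}\simeq \n{\ol f}_{L^1\p{\ol\Omega\times\R}+L^1\p{\R,L^2\p{\ol\Omega}}+L^2\p{\ol\Omega\times\R}}$, and the paper arrives at the same three summands $\alpha,\beta,\gamma$. The gap is exactly where you flag it, but your proposed fix does not work. You take an ambient decomposition $f=g_0+h_0$, set $c=\coprod_i P_{\{i\}}h_0$, and then need $\n{\coprod_i P_{\{i\}}g_0}_{L^1\p{\ol\Omega\times\R}+L^1\p{\R,L^2\p{\ol\Omega}}}\lesssim\n{g_0}_{L^1}$. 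But since the infimum in that interpolation sum localizes in $s\in\R$, this norm equals $\int_\R\n{\coprod_i P_{\{i\}}g_0(\cdot,s)}_{(L^1+L^2)(\ol\Omega)}\,ds$, which by Corollary~\ref{1varmzmz} is $\simeq\n{P_1 g_0}_{L^1\p{\Omega^\infty\times\R}}$. So your claimed estimate is \emph{equivalent} to $\n{P_1 g_0}_{L^1}\lesssim\n{g_0}_{L^1}$ for your particular $g_0$---precisely the unboundedness you warned against. Neither ``Theorem~\ref{JSch} fibrewise on $g_0(x,\cdot)$'' nor Theorem~\ref{premainlemma} helps here: the first produces a splitting in the $\R$-variable at fixed $x$, which carries no information about the chaos structure in $x$; the second needs an independent family of $\sigma$-algebras with adapted $f_{i,j}$, and a generic $g_0\in L^1$ has none.

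The paper avoids this by never taking an ambient $g_0,h_0$. It starts from the ambient $K$-functional itself, which is the $(L^1+L^2)$-norm on $\bigsqcup_j\p{\Omega^\infty\times[0,1]}$, and lower-bounds it via the \emph{trivial} direction of Johnson--Schechtman by $\int ds\int dx\,\p{\sum_j\left|\sum_i f_{i,j}\p{x_i^{(j)},s_j}\right|^2}^{1/2}$---crucially using that $f$ (not some $g_0$) lies in $\V^1_1$, so the inner sum over $i$ is already a sum of independent mean-zero terms. After an $\ell^2$-valued Marcinkiewicz--Zygmund step this becomes $\int ds\int dx\,\p{\sum_{i,j}|f_{i,j}|^2}^{1/2}$, and \emph{now} the hard direction of Theorem~\ref{JSch} applies (in $(i,j)$), then Theorem~\ref{premainlemma} to replace the $s$-global weights by $s_j$-local ones, then one more Johnson--Schechtman in $j$. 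This chain produces $\alpha,\beta,\gamma$ with $\alpha+\beta+\gamma=\ol f$ and the three norms controlled by the ambient $K$-functional, bypassing any projection of an unstructured $L^1$-piece.
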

One can easily recover $K$-closedness of respective scalar-valued spaces by restricting to the subspace consisting of functions with values in $\mathrm{span}\left\{\mathbbm{1}_{[0,1]}\right\}$. It is of independent interest that the proof below will show that a sum of only 3 of 4 interpolation summands appearing in Theorem \ref{4summand} for $p=2$ also has a natural interpretation. 
\begin{proof}
Let $f\in \V^1_1\p{\Omega^\infty,L^1\p{\R}}+\V^2_1\p{\Omega^\infty,L^2\p{\R}}$. Then $f(x)=\sum_i f_i\p{x_i}$, where $f_i:\Omega\times \R\to \R$. Denoting by $f_{i,j}$ the restriction of $f_i$ to $\Omega\times [j,j+1]$, we can put $f_i=\bigsqcup_{j}f_{i,j}$. Our goal is to prove
\beq K\p{f,t;\V^1_1\p{\Omega^\infty,L^1\p{\R}},\V^2_1\p{\Omega^\infty,L^2\p{\R}}}\lesssim K\p{f,t;L^1\p{\Omega^\infty,L^1\p{\R}},L^2\p{\Omega^\infty,L^2\p{\R}}}.\eeq
Just like in the proof of the theorem about inteprolation of $\V^1_m\p{\ell^p}$, by means of scaling in $\R$ we can without loss of generality assume that $t=1$. Then the right hand side is the $L^1+L^2$ norm on the space $\bigsqcup_j \p{\Omega^\infty\times [0,1]}$, so by the trivial part of Johnson-Schechtman inequality
\beq K\p{f,1;L^1\p{\Omega^\infty,L^1\p{\R}},L^2\p{\Omega^\infty,L^2\p{\R}}}\geq \int_{\p{\Omega^\N\times[0,1]}^\Z}\d s\d x \sqrt{\sum_{j\in \Z} \left|\sum_{i\in\N} f_{i,j}\p{x_i^{(j)},s_j}\right|^2}.\eeq
Let us fix $s$ for a moment. The right hand side equals $\int \d x \left\| \sum_{i\in \N} \p{f_{i,j}\p{x^{(j)}_i,s_j}}_{j\in\Z}\right\|_{\ell^2\p{\Z}}$. Therefore, by Marcinkiewicz-Zygmund inequality, the sum over $i$ is unconditional, which allows us to write
\beq \int_{\p{\Omega^\N}^\Z}\d x \sqrt{\sum_{j\in \Z} \left|\sum_{i\in\N} f_{i,j}\p{x_i^{(j)},s_j}\right|^2}\simeq \int_{\p{\Omega^\N}^\Z}\d x \sqrt{\sum_{j\in \Z} \sum_{i\in\N} \left|f_{i,j}\p{x_i^{(j)},s_j}\right|^2}.\eeq
The variables $x^{(j)}_i$ are independent, so by Johnson-Schechtman inequality, there are $a_{i,j}:\Omega\times [0,1]^\infty\to\{0,1\}$ such that
\begin{eqnarray} \int_{\p{\Omega^\N}^\Z}\d x \sqrt{\sum_{j\in \Z} \sum_{i\in\N} \left|f_{i,j}\p{x_i^{(j)},s_j}\right|^2}&\gtrsim &\sum_{i,j} \int_{\Omega}\d \xi \left|a_{i,j}\p{\xi,s}f_{i,j}\p{\xi,s_j}\right| \\
&\ & +  \sqrt{\sum_{i,j} \int_{\Omega}\d \xi \left|\p{1-a_{i,j}}\p{\xi,s}f_{i,j}\p{\xi,s_j}\right|^2}. \end{eqnarray}
By a standard application of Theorem \ref{premainlemma} for the second summand and trivially for the first, 
\beq \int_{[0,1]^\infty}\d s \left( \sum_{i,j} \int_{\Omega}\d \xi \left|a_{i,j}\p{\xi,s}f_{i,j}\p{\xi,s_j}\right|+ \sqrt{\sum_{i,j} \int_{\Omega}\d \xi \left|\p{1-a_{i,j}}\p{\xi,s}f_{i,j}\p{\xi,s_j}\right|^2}\right)\gtrsim\eeq
\beq  \int_{[0,1]^\infty}\d s \left( \sum_{i,j} \int_{\Omega}\d \xi \left|\tilde{a}_{i,j}f_{i,j}\p{\xi,s_j}\right|+ \sqrt{\sum_{i,j} \int_{\Omega}\d \xi \left|\p{1-\tilde{a}_{i,j}}f_{i,j}\p{\xi,s_j}\right|^2}\right)\eeq
for $\tilde{a}_{i,j}= \mathbbm{1}_{\left\{\E_j a_{i,j}\geq \frac12\right\}}$, where $\E_j$ is taken with respect to $s$. Since
\beq \int_{[0,1]^\infty}\d s  \sum_{i,j} \int_{\Omega}\d \xi \left|\tilde{a}_{i,j}\p{\xi,s}f_{i,j}\p{\xi,s_j}\right|= \sum_{i,j} \int_{\Omega}\d \xi \int_{[0,1]}\d \sigma \left|\tilde{a}_{i,j}\p{\xi,\sigma}f_{i,j}\p{\xi,\sigma}\right|,\eeq
the first summand no longer features integration over an infinite product. Applying $L^2\p{\bigsqcup_i \Omega}$-valued Johnson-Schechtman inequality to the second summand gives $b_j: [0,1]\to \{0,1\}$ such that 
\begin{align} \int_{[0,1]^\infty}\d s \sqrt{\sum_{i,j} \int_{\Omega}\d \xi \left|\p{1-\tilde{a}_{i,j}}f_{i,j}\p{\xi,s_j}\right|^2}\gtrsim & \sum_j \int_{[0,1]}\d \sigma \sqrt{\sum_i\int_{\Omega}\d \xi \left|b_j\p{1-\tilde{a}_{i,j}}f_{i,j}\p{\xi,\sigma} \right|^2}\\
&+ \sqrt{\sum_j \int_{[0,1]}\d \sigma \sum_i \int_{\Omega}\d \xi\left|\p{1-b_j}\p{1-\tilde{a}_{i,j}}f_{i,j}\p{\xi,\sigma}\right|^2}.\end{align}
Ultimately, taking $\alpha_{i,j}= \tilde{a}_{i,j}f_{i,j}$, $\beta_{i,j}= b_{i,j}\p{1-\tilde{a}_{i,j}}f_{i,j}$, $\gamma_{i,j}=\p{1-b_{i,j}}\p{1-\tilde{a}_{i,j}}f_{i,j}$ we get
\begin{align}\label{eq:kfunv1v2} K\p{f,1;L^1\p{\Omega^\infty,L^1\p{\R}},L^2\p{\Omega^\infty,L^2\p{\R}}} \gtrsim & \sum_{i,j} \int_{\Omega^2}\d \xi \d \sigma \left|\alpha_{i,j}\p{\xi,\sigma}\right|\\ 
& + \sum_{j}\int_{[0,1]}\d \sigma \sqrt{\sum_i\int_{\Omega}\d \xi \left|\beta_{i,j}\p{\xi,\sigma}\right|^2}\\
&+ \sqrt{\sum_{j}\int_{[0,1]}\d \sigma \sum_i\int_{\Omega}\d \xi \left|\gamma_{i,j}\p{\xi,\sigma}\right|^2 }\end{align}
and
\beq \alpha_{i,j}+\beta_{i,j}+\gamma_{i,j}= f_{i,j}.\eeq
In order to get a decomposition for the $K$-functional of Hoeffding subspaces, we put $g=\bigsqcup_j g_j$, $h=\bigsqcup_{j} h_j$, where
\beq g_j(x,\sigma)=\sum_i \p{\alpha_{i,j}+\beta_{i,j}}\p{x_i,\sigma}, \quad h_j\p{x,\sigma}= \sum_i \gamma_{i,j}\p{x_i,\sigma}.\eeq
Obviously, 
\beq \left\|h\right\|_{\V^2_1\p{L^2}}= \sqrt{\sum_{i,j}\left\|\gamma_{i,j}\right\|_{L^2\p{\Omega\times[0,1]}}^2},\eeq
because $\gamma_{i,j}$ are orthogonal. Moreover, by the trivial part of Johnson-Schechtman inequality,
\begin{align} \left\|g\right\|_{\V^1_1\p{L^1}}\leq & \sum_j \int_{[0,1]}\d\sigma \int_{\Omega^\infty}\d x\left|\sum_i \alpha_{i,j}\p{x_i,\sigma}\right| + \sum_j \int_{[0,1]}\d\sigma \int_{\Omega^\infty}\d x\left|\sum_i \beta_{i,j}\p{x_i,\sigma}\right| \\
\lesssim & \sum_j \int_{[0,1]}\d\sigma \p{ \sum_i\int_{\Omega}\d \xi \left|\alpha_{i,j}\p{\xi,\sigma}\right| + \sqrt{\sum_i\int_{\Omega}\d \xi \left|\beta_{i,j}\p{\xi,\sigma}\right|^2} },\end{align}
which plugged into \eqref{eq:kfunv1v2} proves that $f=g+h$ satisfies the desired inequality.\end{proof}


\begin{thebibliography}{ams}
\bibitem{adamczak} R. Adamczak, \textit{Moment inequalities for $U$-statistics}, Annals of Probability 2006, Vol. 34, No. 6, 2288–2314
\bibitem{adlatlilustat} R. Adamczak, R. Latała, \textit{The LIL for canonical $U$-statistics}, Ann. Probab., Volume 36, Number 3 (2008), 1023-1058

\bibitem{lilustathilb} R. Adamczak, R. Latała, \textit{The LIL for $U$-Statistics in Hilbert Spaces}, J Theor Probab (2008) 21: 704. https://doi.org/10.1007/s10959-007-0134-6

\bibitem{bensha} C. Bennett, R.Sharpley, \textit{Interpolation of operators}, Academic Press, 1988.
\bibitem{bourgcotype} J. Bourgain, \textit{New Banach space properties of the disc algebra and $H^\infty$}, Acta Math. 152 (1984) 1-48.
\bibitem{bourginterp} J. Bourgain, \textit{Some consequences of Pisier's approach to interpolation}, Israel Journal of Mathematics 77 (1992), 165-185
\bibitem{bourgwalsh} J. Bourgain, \textit{Walsh subspaces of $L^p$-product spaces}, Séminaire Analyse fonctionnelle (dit "Maurey-Schwartz") (1979-1980): 1-14. 

\bibitem{cltustat} I. Devan, B. L. S. P. Rao, \textit{Central limit theorem for U-statistics of associated random variables}, Statistics {\&} Probability Letters
Volume 57, Issue 1, 1 March 2002, Pages 9-15


\bibitem{dilineq} S. J. Dilworth, \textit{Some probabilistic inequalities with applications to functional analysis}, Banach Spaces (Bor-Luh Lin, W. B. Johnson ed.), Contemp. Math., AMS (1992).


\bibitem{dilhandbook} S. J. Dilworth, \textit{Special Banach lattices and their applications}, in: \textit{Handbook of the Geometry of Banach Spaces}, vol. I, North-Holland, 2001, pp.497–532.


\bibitem{GKLZ} E. Giné, S. Kwapień, R. Latała, J. Zinn, \textit{The LIL for canonical U-statistics of order 2}, Ann. Probab., Volume 29, Number 1 (2001), 520-557

\bibitem{GLZ} E. Gin{\'e}, R. Latała, J. Zinn, \textit{Exponential and moment inequalities for $U$-statistics}. In \textit{High Dimensional Probability II}, 13-38. Progr. Probab. 47. Birkhauser, Boston, Boston, MA, 2000.

\bibitem{GdlP} E. Gin{\'e}, V. de la Pe{\~{n}}a, \textit{Decoupling: From Dependence to Independence}, Springer-Verlag New York 1999


\bibitem{wassily} W. Hoeffding, \textit{A class of statistics with asymptotically normal distributions}, Ann. Math. Statist. 19 (1948), 293–325.

\bibitem{hoeffslln}W. Hoeffding, \textit{The strong law of large numbers for u-statistics}, Institute of Statistics, Univ. of North Carolina, Mimeograph series No. 302. 

\bibitem{jansonnowicki} S. Janson, K. Nowicki, \textit{The asymptotic distributions of generalized U-statistics with applications to random graphs}, Probab. Th. Rel. Fields (1991) 90: 341. https://doi.org/10.1007/BF01193750

\bibitem{JMSchT} W. B. Johnson, B. Maurey, G. Schechtman, L. Tzafriri, \textit{Symmetric structures in Banach spaces}, Mem. Amer. Math. Soc. 217 (1979).

\bibitem{JSch} W. B. Johnson, G. Schechtman, \textit{Sums of independent random variables in rearrangement invariant function spaces}, Ann. Probab. 17 (1989), 789-808.

\bibitem{JSchZconst} W. B. Johnson, G. Schechtman, J. Zinn, \textit{Best constants in moment inequalities for linear combinations of independent and exachangeable random variables}, Annals of Probability 1985, Vol. 13, No. 1, 234-253
\bibitem{kisbian} S. Kislyakov, \textit{Interpolation Involving Bounded Bianalytic Functions}, Operator Theory: Advances and Applications 2000, Vol. 113
\bibitem{klnow} M. Klass, K. Nowicki, \textit{Order of magnitude bounds for
expectations of $\Delta_2$ functions of nonnegative random bilinear forms and
generalized $U$-statistics}, Ann. Probab. 1997, Vol. 25, 1471-1501.

\bibitem{kwap} S. Kwapień, \textit{On Hoeffding decomposition in $L_p$}, Illinois Journal of Mathematics, Volume 54, Number 3, Fall 2010, Pages 1205–1211
\bibitem{kapwoy} S. Kwapień, W. Woyczyński, \textit{Random Series and Stochastic Integrals: Single and Multiple}, Birkhäuser 1992.

\bibitem{ralatalamoments} R. Latała, \textit{Estimation of moments of sums of independent random variables}, Annals of Probability 1997, Vol. 25, No. 3, 1502–1513


\bibitem{marzyg} J. Marcinkiewicz, A. Zygmund, \textit{Quelques inégalités pour les opérations linéaires}, Fundamenta Mathematicae 32.1 (1939): 115-121.


\bibitem{mullbook} P. F. X. Müller, \textit{Isomorphisms between $H^1$ spaces}, Springer Science \& Business Media, 2005.

\bibitem{pisiersimple} G. Pisier, \textit{A simple proof of a Theorem of Jean Bourgain}, Michigan Math. J. 39 (1992)

\bibitem{rosineq}H. P. Rosenthal,  \textit{The subspaces of $L^p$ ($ p > 2$) spanned by sequences of independent random variables}, Israel J. Math. 8 (1970), 273-303.
\bibitem{diss}M. Rzeszut, \textit{Higher order independent sums in product $L^1$ and  $H^1$ spaces}, doctoral dissertation, Kent State University 2018
\bibitem{xul1h1} Q. Xu, \textit{Some properties od the quotient space $L^1\left(\T^d\right)/H^1\left(\T^d\right)$}, Illinois Journal of Mathematics, Vol. 37 (3), 1993
\bibitem{zinn}J. Zinn, \textit{Comparison of martingale difference sequences} in \textit{Probability in Banach Spaces V}, Lecture Notes in Math. (1985) 1153453-457. 
\end{thebibliography}
\end{document}